\documentclass{article}
\usepackage{geometry}[top=15truemm, bottom=15truemm, right=10truemm, left=10truemm]
\usepackage{amsmath,amssymb,amsthm}
\usepackage{amsmath,amssymb,amsthm}
\usepackage{mathabx}
\usepackage[all]{xy}
\usepackage{graphicx}
\usepackage{mathrsfs}
\usepackage{lineno}

\newcommand{\fsets}{{\bf Sets}_f}

\newcommand{\Cl}{\mathscr{B}}

\newcommand{\D}{\mathscr{D}}
\newcommand{\C}{\mathscr{C}}

\newcommand{\FN}{\mathscr{X}}
\newcommand{\FNO}{\mathscr{O}}
\newcommand{\FNm}{\mathfrak{m}}

\newcommand{\Et}{\mathbf{Et}}

\newcommand{\cl}{\mathscr{B}_f}

\newcommand{\idp}{\mathfrak{p}}
\newcommand{\idq}{\mathfrak{q}}

\newcommand{\idf}{\mathfrak{f}}
\newcommand{\ide}{\mathfrak{e}}
\newcommand{\ida}{\mathfrak{a}}
\newcommand{\idb}{\mathfrak{b}}

\newcommand{\idP}{\mathfrak{P}}

\newcommand{\comp}{\mathbb{C}}
\newcommand{\fld}{\mathbb{F}}

\newcommand{\Ad}{\mathbb{A}}

\newcommand{\F}{\mathrm{F}}

\newcommand{\JJ}{\mathcal{J}}
\newcommand{\RR}{\mathcal{R}}
\newcommand{\LL}{\mathcal{L}}
\newcommand{\HH}{\mathcal{H}}

\newcommand{\V}{\mathcal{V}}

\newcommand{\End}{\mathrm{End}}

\newcommand{\Hom}{\mathrm{Hom}}

\newcommand{\frob}{\mathrm{Frob}}

\newcommand{\nat}{\mathbb{N}}
\newcommand{\idd}{\mathfrak{d}}

\theoremstyle{plain}
\newtheorem{thm}{Theorem}[section]
\newtheorem{lem}[thm]{Lemma}
\newtheorem{prop}[thm]{Proposition}

\theoremstyle{definition}
\newtheorem{defn}[thm]{Definition}

\newtheorem{rem}[thm]{Remark}

\numberwithin{equation}{section}
\title{Deformation of the absolute Galois groups of number fields}
\author{Takeo Uramoto\\ Graduate School of Science and Engineering, Kagoshima University}
\date{}
\begin{document}
\maketitle
\begin{abstract}
\noindent
The technical goal of this paper is to construct and study profinite monoids $\D G_K$ for number fields $K$ such that (1) the unit group $\D G_K^\times$ is isomorphic to the absolute Galois group $G_K$; (2) the maximal abelian quotient of $\D G_K$ is isomorphic to the Deligne-Ribet monoid $DR_K$; (3) the idempotents of $\D G_K$ bijectively correspond to subsets of the set $P_K$ of (finite) primes of $K$; (4) the maximal Galois groups $G_{K, S}$ with restricted ramifications $S$ all appear as maximal closed subgroups of $\D G_K$ at idempotents; (5) all maximal closed subgroups of $\D G_K$ at idempotents are of this form. We also discuss the relationship between $\D G_K$ and the $\idp$-typical Witt vectors of Borger and de Smit \cite{Borger_Smit08, Borger_Smit11}, where we will describe the fundamental monoid of the semi-galois category of $\Lambda_{O_\idp}$-rings in \cite{Borger_Smit11} in terms of fields of norms of the local field $K_\idp$ in particular. 
\end{abstract}
\section{Introduction}
\label{s1}
To begin with this paper, let us mean by a \emph{deformation of a profinite group $G$} in general a profinite monoid $D$ whose unit group $D^\times$ is isomorphic to $G$. Our motivating example of this terminology is the \emph{Deligne-Ribet monoid} $DR_K$ associated to each number field $K$, whose unit group is isomorphic to the maximal abelian Galois group $G_K^{ab}$ (cf.\ \S \ref{s2}). 
Originally, $DR_K$ was introduced by Deligne and Ribet \cite{Deligne_Ribet} in their work on $p$-adic interpolations of special values of $L$-functions, and played a pivotal role in the works of Borger and de Smit \cite{Borger_Smit08, Borger_Smit11}, where they revealed an intrinsic connection between classical class field theory and the theory of generalized Witt vectors \cite{Borger1, Borger2}. In our previous works \cite{Uramoto21, Uramoto23}, we then showed that the Deligne-Ribet monoid $DR_K$ can be utilized to ``deform'' Shimura's reciprocity law for Siegel modular functions in the sense that, while Shimura's reciprocity law only relates the special values of Siegel modular functions living at the same level by Galois conjugates, our \emph{deformed} reciprocity law relates those living at different levels too by some higher congruences (cf.\ \S 1 \cite{Uramoto23}); in fact, we proved a canonical correspondence, as \emph{modularity theorem} \cite{Uramoto21, Uramoto23}, between certain distributions of special values of Siegel modular functions (defined as \emph{modular vectors}) and algebraic Witt vectors, where the analysis of the semigroup structure of $DR_K$ plays a fundamental role (cf.\ \S 4.3 \cite{Uramoto21}, \S 4.2 \cite{Uramoto23}). 

The major concern of this paper is to re-interpret these results from more generic perspective; in particular, we focus on the semigroup structure of the Deligne-Ribet monoid $DR_K$ and highlight its relationship to Hilbert's classical ramification theory, so that it admits non-abelian generalizations in a natural way. 

Indeed, $DR_K$ had a notable feature that its semigroup structure reflects the filtration of higher ramification subgroups of $G_K^{ab}$ in the sense proved in \S \ref{s2}; in particular, the maximal abelian Galois groups $G_{K, S}^{ab}$ with restricted ramifications appear as the maximal closed subgroups at idempotents of $DR_K$\footnote{For a profinite monoid $D$, we mean by the \emph{maximal closed subgroup at an idempotent $e \in D$} the unit group $(e D e)^\times$ of the subsemigroup $e D e \subseteq D$. Unlike profinite groups, a profinite monoid $D$ can have non-trivial idempotents $e$; and around them, (possibly non-isomorphic) profinite groups $(e D e)^\times$ appear as subgroups, or precisely, as sub-\emph{semigroups} in that their identity elements ($= e$) are not generally the same as the identity element $1$ of $D$. (That is, the inclusion $(e D e)^\times \hookrightarrow D$ does not preserve the identity element.)}. In other words, these profinite groups $G_{K, S}^{ab}$ could be placed together as \emph{sub-semigroups} (rather than quotients) within the single ambient profinite monoid $DR_K$, and transformed to each other by non-invertible Frobenius operators in $DR_K$ (\S \ref{s2}); this suggests us a new way to control the variation of the groups $G_{K, S}^{ab}$. Moreover, by involving the groups $G_{K, S}^{ab}$ as subsemigroups in $DR_K$, the ambient monoid $DR_K$ gets dual to algebraic Witt vectors in the sense proved in \S 3.1 \cite{Uramoto21}; this relationship to Witt vectors cannot emerge if we see $G_{K, S}^{ab}$ individually. Our terminology above, i.e.\ deformation of a profinite group, is to suggest that this aspect of $DR_K$ may develop to yet another ``Galois deformation theory'', i.e.\ a theory of deformations of Galois groups \emph{themselves} (rather than their linear representations), whose technical base will be the theory of profinite semigroups. As the first approximation of our subject to be developed, let us display again the following fully-abstract definition:

\begin{defn}[deformation of a profinite group]
\label{full abstract deformation}
Let $G$ be a profinite group. By a \emph{deformation of $G$}, we shall mean a profinite monoid $D$ whose unit group $D^\times$ is isomorphic to $G$. 
\end{defn}

To make the subject concrete, this paper focuses on a specific deformation of the absolute Galois groups $G_K$ of number fields $K$. To be precise, we construct a deformation $\D G_K$ of $G_K$ such that (i) the maximal abelian quotient of $\D G_K$ is isomorphic to $DR_K$; (ii) $\D G_K$ is topologically generated by \emph{Frobenius elements}\footnote{It would be meaningful to mention here that these elements $\psi_v \in \D G_K$ can be defined \emph{canonically}, cf.\ \S \ref{s3}; this is in a sharp contrast to the fact that a lift $\sigma_\idp$ of the Frobenius $\frob_\idp$ onto $G_K$ can only be defined non-canonically (i.e.\ need some non-canonical choice).} $\{\psi_v\}_{v \in \V_K}$ indexed by the set $\V_K$ of finite primes $v$ of the algebraic closure $\bar{K}$ of $K$ such that each $\psi_v$ is mapped to the Frobenius $\psi_{\idp_v}$ in $DR_K$ (for the prime $\idp_v$ of $K$ corresponding to $v$) under the abelianization $\D G_K \twoheadrightarrow DR_K$; (iii) the maximal Galois groups $G_{K, S}$ with restricted ramifications $S \subseteq P_K$ all appear as the maximal closed subgroups at idempotents of $\D G_K$; (iv) all maximal closed subgroups at idempotents of $\D G_K$ are of this form; also (v) the finite $\D G_K$-sets are dual to the algebras of $\idp$-typical Witt vectors in the sense of \cite{Borger_Smit11} locally at all $\idp$ (cf.\ \S \ref{s4}). We shall firstly construct a \emph{free} deformation $DG_K$ of $G_K$ satisfying (i), (ii), (v) (cf.\ \S \ref{s3}); our target $\D G_K$ is constructed as a certain quotient\footnote{Actually, our construction of $\D G_K$ will be based on a refinement of the original definition $DR_K = \lim_\idf DR_\idf$ by a finer approximation $DR_K = \lim_{L/K} DR_{L/K}$ indexed by finite abelian extensions $L/K$ (cf.\ \S \ref{s2}); the major purpose of this refinement is that the construction of these $DR_{L/K}$'s makes equal sense for finite non-abelian extensions $L/K$. To this end, we will recall the Galois-orbit decomposition $DR_\idf = \coprod_{\idd\mid \idf} C_{\idf/\idd}$; and re-interpret the quotients of the ray class groups $C_\idf \twoheadrightarrow C_{\idf/\idd}$ (that make sense only for ray class fields $K_\idf/K$) as the quotients of $Gal(K_\idf/K)$ by its higher ramifications subgroups (that makes sense for any abelian and non-abelian extensions $L/K$).} of $DG_K$ (cf.\ \S \ref{s5}). 

This paper consists of four major sections (\S \ref{s2} -- \S \ref{s5}), with an appendix \S \ref{a1} for some generalities on semigroup theory. In \S \ref{s2} we first review the definition of the Deligne-Ribet monoid $DR_K$ and its adelic description due to Yalkinoglu \cite{Yalkinoglu} (\S \ref{s2.1}); the major subject here is to clarify the relationship between the structure of $DR_K$ and the ramification subgroups in $G_K^{ab}$ (\S \ref{s2.2}): we prove in particular that the maximal abelian Galois groups $G_{K, S}^{ab}$ with restricted ramifications appear as the maximal closed subgroups at idempotents of $DR_K$, and all the maximal closed subgroups at idempotents in $DR_K$ are of this form. After this reviews on $DR_K$, we construct in \S \ref{s3.1} a profinite monoid $DG_K$ and prove in \S \ref{s3.2} that (1) the unit group $DG_K^\times$ is isomorphic to $G_K$; and (2) the maximal abelian quotient of $DG_K$ is isomorphic to $DR_K$, where we also construct canonical (topological) generators of $DG_K$ that map onto the Frobenius elements $\psi_\idp \in DR_K$ under the abelianization $DG_K \twoheadrightarrow DR_K$. The subject of \S \ref{s4} is then to study the arithmetic aspect of $DG_K$: In \S \ref{s4.1}, we describe the category $\cl DG_K$ of finite $DG_K$-sets in terms of finite etale $K$-algebras equipped with $\Lambda_{O_v}$-ring structures for all $v \in \V_K$ in the sense of \cite{Borger_Smit11}; we show in \S \ref{s4.2} that, for each prime $\idp$, the fundamental monoid of the semi-galois category $\C_\idp$ of finite etale $\Lambda_{O_\idp}$-rings having integral models \cite{Borger_Smit11} is isomorphic to the profinite monoid $Q_\idp \ltimes_{I_\idp} G_\idp$, where $Q_\idp = W_\idp^+ \cup \{0_\idp\}$ denotes the one-point compactification of the \emph{Weil monoid} $W_\idp^+$ (i.e.\ a submonoid of the Weil group $W_\idp$ of the local field $K_\idp$, \S \ref{s3}) where $0_\idp$ behaves as the zero-element, and $I_\idp$ denotes the inertia subgroup of the absolute Galois group $G_\idp$ of the local field $K_\idp$; our $DG_K$ will be constructed so that $DG_K$ contains these profinite monoids $Q_\idp \ltimes_{I_\idp} G_\idp$ as its submonoids; and is (topologically) generated by them (see \S \ref{s3} for more detail). As the second major goal in \S \ref{s4.2}, we also prove yet another description of the fundamental monoid of $\C_\idp$ in terms of \emph{fields of norms} of $K_\idp$. In the last section \S \ref{s5}, we study a certain quotient $\D G_K$ of $DG_K$ and prove that it satisfies the above mentioned properties. 


\paragraph{Acknowledgements}
Some part of this work was announced at ALGI'25 on 2, September 2025 \cite{Uramoto_ALGI25}, and Kagoshima Algebra-Analysis-Geometry seminar on 18, February 2026 \cite{Uramoto_AAG26}. We are grateful to Hitoshi Furusawa and the organizers for recommending/inviting us to give our talks there, which motivated us to write this paper. This work is supported by JSPS KAKENHI Grant-No.22K03248. 

\section{The Deligne-Ribet monoid $DR_K$}
\label{s2}
In this section, we review two descriptions of the Deligne-Ribet monoid $DR_K$: The first one is the original definition \cite{Deligne_Ribet}, which can be seen as a monoid extension of classical (strict) ray class groups; the second one \cite{Yalkinoglu} gives an adelic description of $DR_K$ (\S \ref{s2.1}). The main results in this section is those developed in \S \ref{s2.2}, where we study the semigroup structure of $DR_K$ in more detail. 

Throughout this paper, $K$ denotes a number field and $O_K$ the ring of integers in $K$. The set of non-zero maximal ideals of $O_K$ is denoted by $P_K$, while $I_K$ denotes the monoid of non-zero integral ideals of $K$. The completion of $K$ at $\idp \in P_K$ is denoted by $K_\idp$, and $O_\idp$ denotes its integer ring. The ring of adeles of $K$ is denoted by $\Ad_K$, while $\Ad_{K, f}$ denotes the ring of finite adeles; the ring of finite integral adeles is denoted by $\widehat{O}_K$. The Artin reciprocity map is denoted by $[-]: \Ad_K^\times \rightarrow G_K^{ab}$, which induces an isomorphism $\Ad_K^\times/ (\overline{K^\times \cdot K^{\times \circ}_\infty}) \simeq G_K^{ab}$ by class field theory (cf.\ e.g.\ \S 2.1.2 \cite{Uramoto23}). For simplicity, we shall often write as $s \in G_K^{ab}$ to mean the element $[s] \in G_K^{ab}$ for $s \in \Ad_K^\times$

\subsection{Preliminaries}
\label{s2.1}
The original definition of $DR_K$ can be found in \S2 \cite{Deligne_Ribet}. To review this, recall that, for each $\idf \in I_K$, the classical (strict) ray class group $C_\idf$ of the conductor $\idf$ was defined as the quotient $C_\idf := J_K^\idf/P_K^\idf$ of the group $J_K^\idf$ of (fractional) ideals prime to $\idf$ by the group $P_K^\idf$ of principal ideals $(a)$ with $a \equiv 1 \mod \idf$. The definition of $DR_K$ is based on a monoid extension of these ray class groups, which we call \emph{ray class monoids}: 

\begin{defn}[ray class monoid]
The \emph{ray class monoid $DR_\idf$ of the conductor $\idf \in I_K$} is defined as the quotient of the monoid $I_K$ by the following monoid congruence $\sim_\idf$: For $\ida, \idb \in I_K$, 
\begin{eqnarray}
 \ida \sim_\idf \idb &\Leftrightarrow& \exists t \in K_+ \cap (1 + \idf \idb^{-1}), \hspace{0.1cm} \ida \idb^{-1} = (t), 
\end{eqnarray}
where $K_+$ denotes the totally positive elements of $K$. We denote by $[\ida]_\idf$, or simply by $[\ida] \in DR_\idf$, to mean the $\sim_\idf$-class of $\ida \in I_K$. 
\end{defn}

By definition, for $\idf \mid \idf'$, we have the implication $\ida \sim_{\idf'} \idb \Rightarrow \ida \sim_\idf \idb$ for all $\ida, \idb \in I_K$, which means that there exists a canonical surjective (monoid) homomorphism $\rho^{\idf'}_\idf: DR_{\idf'} \twoheadrightarrow DR_\idf$. In fact, these homomorphisms constitute an inverse system $\{DR_\idf, \rho_\idf^{\idf'}\}_\idf$ of finite monoids, whose inverse limit is the Deligne-Ribet monoid $DR_K$:
\begin{defn}[Deligne-Ribet monoid $DR_K$]
The \emph{Deligne-Ribet monoid} $DR_K$ for the number field $K$ is defined as the following inverse limit of the ray class monoids $DR_\idf$:
\begin{eqnarray}
 DR_K &:=& \lim_{\leftarrow \idf} DR_\idf. 
\end{eqnarray}
\end{defn}

\begin{rem}[the unit group of $DR_\idf$]
To gain some intuition on the structure of $DR_\idf$, it is helpful to recall that the congruence $\ida \sim_\idf \idb$ is equivalent to the following condition:
\begin{eqnarray}
 \ida \sim_\idf \idb &\Leftrightarrow& \left\{ \begin{array}{l}  \textrm{we have $(\ida, \idf) = (\idb, \idf) =: \idd$ and} \\  \ida/\idd = \idb/\idd \hspace{0.2cm} \textrm{in $C_{\idf/\idd}$} \end{array} \right.
\end{eqnarray}
where $(\ida, \idf)$ (resp.\ $(\idb, \idf)$) denotes the greatest common divisor of $\ida$ (resp.\ $\idb$) and $\idf \in I_K$. Therefore, it follows in particular that, for $\ida, \idb$ prime to $\idf$, the congruence $\ida \sim_\idf \idb$ is equivalent to the equality $\ida = \idb$ in the ray class group $C_\idf$; and hence, the unit group $DR_\idf^\times$ can be naturally identified with the group $C_\idf$ via a well-defined injection $C_\idf \hookrightarrow DR_\idf$. Taking the inverse limit of $DR_\idf$'s, it also follows from class field theory that the unit group $DR_K^\times = \lim_\idf DR_\idf^\times$ of $DR_K$ is canonically isomorphic to the maximal abelian Galois group $G_K^{ab}$.
\end{rem}

\begin{rem}[Galois-orbit decomposition of $DR_\idf$]
Thanks to the above, the ray class monoid $DR_\idf$ has canonically a structure of a \emph{finite $G_K$-set}, namely, with respect to the (right) action of $G_K$ via $G_K \twoheadrightarrow G_K^{ab} \twoheadrightarrow C_\idf = DR_\idf^\times \hookrightarrow DR_\idf$. With this Galois action, $DR_\idf$ admits the following $G_K$-orbit decomposition:
\begin{eqnarray}
 DR_\idf &=& \coprod_{\idd \mid \idf} [\idd] \cdot DR_\idf^\times. 
\end{eqnarray}
Each class $[\idd]$, moreover, has generally a non-trivial isotropy group so that the $G_K$-orbit $[\idd] \cdot DR_\idf^\times$ is isomorphic to $C_{\idf/\idd}$ as finite $G_K$-sets, where the $G_K$-set structure of $C_{\idf/\idd}$ comes from the canonical projection $G_K \twoheadrightarrow C_{\idf/\idd}$. In this sense, $DR_\idf$ can be decomposed as the following disjoint sum of the ray class groups $C_{\idf/\idd}$ in the category $\cl G_K$ of finite $G_K$-sets:
\begin{eqnarray}
\label{galois orbit of DR_f}
 DR_\idf &\simeq& \coprod_{\idd \mid \idf} C_{\idf/\idd}
\end{eqnarray}
Rephrased in terms of semigroup theory, this decomposition coincides with that of $DR_\idf$ by Green's \emph{$\HH$-equivalence classes} (cf.\ \S \ref{a1}). We will further study this semigroup-theoretic aspect in \S \ref{s2.2}. 
\end{rem}

\begin{rem}[adelic description of $DR_K$]
The Deligne-Ribet monoid $DR_K$ also has the following description due to Yalkinoglu (cf.\ Proposition 8.2 \cite{Yalkinoglu}):
\begin{eqnarray}
\label{partly adelic DR_K}
DR_K &\simeq& \widehat{O}_K \times_{\widehat{O}_K^\times} G_K^{ab},
\end{eqnarray}
where the group $\widehat{O}_K^\times$ acts on the product monoid $\widehat{O}_K \times G_K^{ab}$ by $u \cdot (\rho, s) = (\rho u, u^{-1} s)$ for each $\rho \in \widehat{O}_K$, $[s] \in G_K^{ab}$ and $u \in \widehat{O}_K^\times$; the right-hand side of (\ref{partly adelic DR_K}) is the quotient of $\widehat{O}_K \times G_K^{ab}$ by this action of $\widehat{O}_K^\times$. Together with the isomorphism $G_K^{ab} \simeq \Ad_K^\times/(\overline{K^\times \cdot K^{\times, \circ}_\infty})$ of class field theory, the above description yields a description of $DR_K$ purely in terms of adeles:
\begin{eqnarray}
\label{adelic DR_K}
 DR_K &\simeq&  \widehat{O}_K \times_{\widehat{O}_K^\times} \bigl(\Ad_K^\times/(\overline{K^\times \cdot K^{\times, \circ}_\infty})\bigr)
\end{eqnarray}
In this paper, we will utilize the former adelic description (\ref{partly adelic DR_K}) of $DR_K$ (or its finite approximation) in \S \ref{s2.2} in order to refine the original approximation $DR_K = \lim_\idf DR_\idf$, based on which we construct our non-abelian profinite monoid $\D G_K$ in \S \ref{s5}. 
\end{rem}

\begin{rem}[$DR_K$ and $\Lambda$-rings \cite{Borger_Smit08, Borger_Smit11}]
The Deligne-Ribet monoid $DR_K$ has a notable relationship with class field theory of the number field $K$ as proved by Borger and de Smit \cite{Borger_Smit08, Borger_Smit11} (see also \cite{Borger_Smit18}), where they proved that the category $\cl DR_K$ of finite $DR_K$-sets is (opposite) equivalent to that $\C_K$ of \emph{finite etale $\Lambda$-rings over $K$ having integral models} (cf.\ e.g.\ \S 2 \cite{Uramoto21}). In particular, each $\idp \in I_K$ canonically embeds in $DR_K$; and we denote its image by $\psi_\idp \in DR_K$ and call the \emph{Frobenius element}. This notation comes from the fact that $\psi_\idp \in DR_K$ corresponds to the Frobenius lifts on each finite etale $\Lambda$-rings in $\C_K$. 
\end{rem}

\subsection{The structure of $DR_K$}
\label{s2.2}
In this subsection, we study the semigroup structure of $DR_K$ in more detail; in particular, we show that the maximal abelian Galois groups $G_{K, S}^{ab}$ with restricted ramifications $S \subseteq P_K$ all appear as the maximal closed subgroups at idempotents of $DR_K$ (\S \ref{s2.2.1}). In some sense, this structure of $DR_K$ around idempotents is a limiting result of the fact that the finite quotient $DR_\idf$ reflects the filtration of higher ramification subgroups of $Gal(K_\idf/K)$. For our later purpose, we shall highlight this higher ramification-theoretic aspect of $DR_\idf$'s and extend them to finite monoids $DR_{L/K}$ indexed by any finite abelian extensions $L/K$ (\S \ref{s2.2.2}), so that it makes sense also for finite non-abelian extensions $L/K$ (cf.\ \S \ref{s5}). For some terminologies on semigroup theory, the reader is referred to \S \ref{a1}, or \cite{Pin, Howie}.

\subsubsection{The local structure of $DR_K$}
\label{s2.2.1}
Here we study the \emph{local structure} of $DR_K$, i.e.\ the structure of \emph{Green equivalence classes} of $DR_K$; since the $\JJ, \RR, \LL$, $\HH$-classes coincide for $DR_K$ by its commutativity, we just describe $\HH$-classes. In general, $\HH$-classes are classified into two types, i.e.\ regular and non-regular $\HH$-classes. By definition, the former ones are those $\HH$-classes containing idempotents and form the maximal closed subgroups at their respective idempotents, whereas the latter ones are not even closed under multiplication. In the case of $DR_K$, both types of $\HH$-classes are relatively simple and admit complete descriptions as we see in this subsection. 

In fact, we first remark that the $\HH$-classes of $DR_K$ are precisely the $G_K$-orbits:

\begin{prop}[$\HH$-classes are $G_K$-orbits]
Two elements $x, y \in DR_K$ are $\HH$-equivalent in $DR_K$ if and only if they belongs to the same $G_K$-orbit. 
\end{prop}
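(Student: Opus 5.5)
The plan is to prove the statement first at each finite level $DR_\idf$ and then pass to the inverse limit, using compactness to handle the existential quantifiers in the definition of $\HH$. Since $DR_K$ is commutative, $x \HH y$ means $x DR_K^1 = y DR_K^1$. Here $DR_K$ is already a monoid, so this says $x = ya$ and $y = xb$ for some $a,b \in DR_K$. The $G_K$-action on $DR_K$ is multiplication by the units $DR_K^\times \simeq G_K^{ab}$, through $G_K \twoheadrightarrow G_K^{ab}$. Hence "same $G_K$-orbit" means $y = xu$ for some $u \in DR_K^\times$.

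One direction is immediate. If $y = xu$ with $u$ a unit, then $x = yu^{-1}$, so $x \HH y$.

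For the converse, I would first prove the finite-level statement: in each $DR_\idf$, $\HH$-classes are exactly the $G_K$-orbits $[\idd]\cdot DR_\idf^\times$. Suppose $[\ida] \HH [\idb]$ in $DR_\idf$. Write $\idd = (\ida,\idf)$ and $\idd' = (\idb,\idf)$. From $[\ida] = [\idb][\idc]$ we get $\ida \sim_\idf \idb\idc$. By the characterization of $\sim_\idf$ recalled in the Remark on the unit group, this gives $\idd = (\idb\idc,\idf)$, which is divisible by $\idd'$. Symmetrically $\idd \mid \idd'$, so $\idd = \idd'$. Both elements therefore lie in the same orbit $[\idd]\cdot DR_\idf^\times$ of the Galois-orbit decomposition (\ref{galois orbit of DR_f}). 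In fact only the divisibility $(\ida,\idf)\mid(\ida\idc,\idf)$ is needed here. The orbits are finite and exhaust $DR_\idf$, so this settles the finite case.

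To pass to the limit, take $x \HH y$ in $DR_K$ and let $x_\idf, y_\idf$ be their images in $DR_\idf$. The relation $x \HH y$ descends, so $x_\idf \HH y_\idf$, and by the finite case the set
\[
U_\idf = \{u \in DR_K^\times : y_\idf = x_\idf\, \rho_\idf(u)\}
\]
is non-empty. It is closed in the compact group $DR_K^\times = \lim_\idf DR_\idf^\times$. For this I use that the projection $DR_K^\times \to DR_\idf^\times$ is surjective, which holds because $DR_K^\times \simeq G_K^{ab}$ surjects onto $C_\idf$. The sets $U_\idf$ decrease as $\idf$ grows under divisibility, since a relation at level $\idf'$ projects to one at level $\idf$ when $\idf \mid \idf'$. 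By compactness their intersection contains some $u$, and then $y = xu$. So $x$ and $y$ are in the same $G_K$-orbit.

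The main obstacle is this limit step. One has to check that each finite-level unit comes from a genuine unit of $DR_K$, that is, surjectivity of $G_K^{ab} \to C_\idf$, and that the family $\{U_\idf\}$ is directed so the finite intersection property applies.
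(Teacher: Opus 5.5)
Your proof is correct, but it takes a different route from the paper. The paper gives no argument of its own here. It cites \S 3.2 of \cite{Uramoto21}, where the $G_K$-orbits of $DR_K$ were identified with the $\JJ$-classes, and then notes that $\JJ=\RR=\LL=\HH$ in a commutative monoid, since $DxD = xD = Dx$.

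You prove the statement from scratch within this paper's framework:
\begin{itemize}
\item At level $\idf$ you use $(\ida,\idf)$ as an orbit invariant. It is well defined on $\sim_\idf$-classes by the characterization in the remark on the unit group, and it can only grow under multiplication. This pins down which piece of the decomposition (\ref{galois orbit of DR_f}) an element lies in. Placing $[\ida]$ in the piece indexed by $(\ida,\idf)$ also uses that elements of $[\idd]\cdot DR_\idf^\times$ have gcd $\idd$ with $\idf$, which follows from the same characterization.
\item You then lift to $DR_K$ by a finite-intersection argument in the compact group $DR_K^\times \simeq G_K^{ab}$.
\end{itemize}
This is the same pattern the paper itself uses later to show that the $\HH$-classes of $\D G_K$ are $G_K$-orbits, with the closed sets $[x,y]_L \subseteq G_K$. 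So your version has the advantage of being self-contained and uniform with \S\ref{s5}, at the cost of length.

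If you want a shorter self-contained proof, use the adelic description. If $[\rho,\sigma] = [\lambda,\tau][\mu,\nu]$, then $\rho = \lambda\mu u$ for some $u \in \widehat{O}_K^\times$, so $v_\idp(\rho) \geq v_\idp(\lambda)$ for every $\idp$. Mutual divisibility therefore gives equality of all valuations, and Lemma \ref{H-equivalence and valuation} finishes the proof without any limit argument.
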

\begin{proof}
This is essentially proved in \S 3.2, \cite{Uramoto21}. The only slight difference is that we stated it in terms of $\JJ$-classes rather than $\HH$-classes; but as mentioned above, $\JJ$-classes and $\HH$-classes are the same for the commutative $DR_K$, hence the claim. 
\end{proof}

Therefore, the classification of the $\HH$-classes of $DR_K$ is equivalent to that of $G_K$-orbits in $DR_K$; and as mentioned above, they are further classified into the regular and non-regular $\HH$-classes. For our classification of $\HH$-classes, we start with the regular ones. By the general theory of semigroups, a regular $\HH$-class $H \subseteq DR_K$ is that of some idempotent $e \in DR_K$ and forms a sub-semigroup of $DR_K$ which itself forms a group with $e$ the identity element. In particular, we have the description $H = (e DR_K e)^\times = (e DR_K)^\times$, where the last equality is due to the commutativity of $DR_K$. On the other hand, as proved in our previous work \cite{Uramoto25}, each idempotent $e \in DR_K$ is of the form $e = e_S$ for some unique $S \subseteq P_K$, where the idempotent $e_S \in DR_K$ is given explicitly as follows:
\begin{eqnarray}
 e_S &=& [1_S, 1];
\end{eqnarray}
here the $\idp$-component of $1_S \in \widehat{O}_K = \prod_\idp O_\idp$ is given as $1 \in O_\idp$ if $\idp \in S$ and $0 \in O_\idp$ otherwise. Using these results, we obtain the following description of regular $\HH$-classes:

\begin{prop}[classification of regular $\HH$-classes]
\label{classification of regular H class}
Each regular $\HH$-class of $DR_K$ is that of the idempotent $e_S$ for some unique $S \subseteq P_K$; and for each $S \subseteq P_K$, the $\HH$-class $H_{e_S}$ is isomorphic (as a profinite group) to the maximal abelian Galois group $G_{K, S}^{ab}$ with the ramifications restricted on $S$. 
\end{prop}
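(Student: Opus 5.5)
We work with Yalkinoglu's adelic description (\ref{partly adelic DR_K}), $DR_K \simeq \widehat{O}_K \times_{\widehat{O}_K^\times} G_K^{ab}$, and with the result of \cite{Uramoto25} that every idempotent of $DR_K$ equals $e_S = [1_S, 1]$ for a unique $S \subseteq P_K$. The first assertion is then immediate. A regular $\HH$-class contains an idempotent, and by general semigroup theory it contains exactly one. Hence it is $H_{e_S}$ for a unique $S$; uniqueness of $S$ comes from \cite{Uramoto25}.

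For the second assertion, commutativity gives $H_{e_S} = (e_S DR_K)^\times$. We compute $e_S DR_K$ in the adelic model. Since $1_S \rho = \rho 1_S$, we have
\begin{eqnarray*}
e_S DR_K &=& \{[1_S \rho, s] : \rho \in \widehat{O}_K,\ s \in G_K^{ab}\}.
\end{eqnarray*}
Such an element is invertible in $e_S DR_K$ (with identity $e_S$) exactly when there is $[\rho', s']$ with $[1_S \rho\rho', ss'] = [1_S, 1]$. Unwinding the $\widehat{O}_K^\times$-action, this means there is $u \in \widehat{O}_K^\times$ with $1_S\rho\rho' = 1_S u$ and $ss' = u^{-1}$. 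This holds precisely when $\rho_\idp \in O_\idp^\times$ for every $\idp \in S$. Any such element can therefore be written as $[1_S u, s] = [1_S, us]$ for some $u \in \widehat{O}_K^\times$. This yields a continuous surjective homomorphism
\begin{eqnarray*}
\phi_S: G_K^{ab} \twoheadrightarrow H_{e_S}, & & s \mapsto [1_S, s].
\end{eqnarray*}

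Next we identify $\ker\phi_S$. We have $[1_S, s] = [1_S, 1]$ iff there is $u \in \widehat{O}_K^\times$ with $1_S u = 1_S$ and $s = u^{-1}$. The condition $1_S u = 1_S$ says exactly that $u_\idp = 1$ for $\idp \in S$, while the components at $\idp \notin S$ are arbitrary. So $\ker \phi_S$ is the image under the Artin map of $\prod_{\idp \notin S} O_\idp^\times$. Since this group is compact, its image is closed, so no closure needs to be taken. By class field theory, the image of $O_\idp^\times$ is the inertia group $I_\idp^{ab}$ at $\idp$ in $G_K^{ab}$. The image of $\prod_{\idp\notin S} O_\idp^\times$ is therefore the closed subgroup generated by the inertia groups at primes outside $S$. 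Hence $G_K^{ab}/\ker\phi_S = G_{K,S}^{ab}$, the maximal abelian extension unramified outside $S$, with $S$ read with the paper's convention on archimedean places absorbed into the strict setting.

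Finally, a continuous bijective homomorphism between profinite (compact Hausdorff) groups is a topological isomorphism, so $\phi_S$ induces $G_{K,S}^{ab} \simeq H_{e_S}$.

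The main obstacle is making the adelic bookkeeping rigorous. Two points need care. First, the quotient by $\widehat{O}_K^\times$ must be handled topologically, to be sure the orbit computations describe equality in the profinite quotient. Second, it has to be checked that the Artin image of $\prod_{\idp\notin S}O_\idp^\times$ is exactly the closed subgroup generated by the relevant inertia groups, and that it matches the paper's definition of $G_{K,S}^{ab}$, including the treatment of the infinite places. If the topological quotient proves awkward, we would instead argue at finite level. At level $\idf$, the idempotent $e_S$ maps to $[\idd]$ with $\idd$ the $S'$-part of $\idf$, where $S'$ is the complement of $S$. The Galois-orbit decomposition (\ref{galois orbit of DR_f}) then gives $H \simeq C_{\idf/\idd}$, and passing to the limit over $\idf$ yields $\lim_\idf C_{\idf/\idd} = G_{K,S}^{ab}$, since $\idf/\idd$ ranges over all moduli supported on $S$.
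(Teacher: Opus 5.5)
Your proposal is correct and follows the same route as the paper. The first assertion comes from the classification of idempotents $e_S = [1_S, 1]$ quoted just before the proposition, and the second from the adelic description $DR_K \simeq \widehat{O}_K \times_{\widehat{O}_K^\times} G_K^{ab}$: there, multiplying by $[1_S,1]$ kills the unit components outside $S$, so the unit group becomes $G_K^{ab}$ modulo the Artin image of $\prod_{\idp \notin S} O_\idp^\times$, i.e.\ $G_{K,S}^{ab}$. Your explicit surjection $s \mapsto [1_S, s]$ and the kernel computation simply make rigorous what the paper states in sketch form. The topological worry you raise is harmless, because $\widehat{O}_K^\times$ is compact, so the orbit quotient is compact Hausdorff and equality in it is exactly orbit equivalence.
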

\begin{proof}
We need to prove the isomorphism $(e_S DR_K)^\times \simeq G_{K, S}^{ab}$. By the adelic description of $DR_K$, we have:
\begin{eqnarray}
 e_S \cdot DR_K &=& [1_S, 1] \cdot \widehat{O}_K \times_{\widehat{O}_K^\times} G_K^{ab}.
\end{eqnarray}
In particular, since the $\idp$-components of $1_S$ are zero for $\idp \not \in S$, the $\widehat{O}_K$-component of $DR_K$ reduces to $\prod_{\idp \in S} O_\idp$; accordingly, the $\idp$-components of the unit group $\widehat{O}_K^\times$ get trivial for $\idp \not \in S$. This transfers to the group component (via the congruence $(\rho u, \sigma) \sim (\rho, u \sigma)$) so that the unit group of $e_S \cdot DR_K$ is isomorphic to the quotient of $G_K^{ab}$ by the product of the groups $O_\idp^\times$ for $\idp \not \in S$, and thus, isomorphic to $G_{K, S}^{ab}$ by class field theory. 
\end{proof}

Now we proceed to the classification of non-regular $\HH$-classes; as proved above, this is equivalent to classifying those $G_K$-orbits which contain no idempotents. With this in mind, we start with the following easy observations, one of which generalizes Proposition \ref{classification of regular H class} to non-regular $\HH$-classes:

\begin{lem}
\label{H-equivalence and valuation}
Two elements $x = [\rho, \sigma]$, $y = [\lambda, \tau] \in DR_K$ belong to the same $G_K$-orbit if and only if $v_\idp(\rho) = v_\idp(\lambda)$ for all $\idp \in P_K$.
\end{lem}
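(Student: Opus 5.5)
We work throughout in the adelic description $DR_K \simeq \widehat{O}_K \times_{\widehat{O}_K^\times} G_K^{ab}$. The $G_K$-action is right multiplication by units $[1,s]$ with $s \in G_K^{ab}$, so $[\rho,\sigma]\cdot s = [\rho, \sigma s]$. We also use the convention $v_\idp(0)=\infty$, so that $v_\idp(\rho)$ is well defined for every $\rho \in \widehat{O}_K = \prod_\idp O_\idp$ and is unchanged when $\rho$ is multiplied by a unit of $\widehat{O}_K$. Both directions of the lemma reduce to this bookkeeping.

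\emph{Only if.} First check that the function $[\rho,\sigma] \mapsto (v_\idp(\rho))_{\idp\in P_K}$ is well defined on the quotient. The relation identifies $(\rho u, u^{-1}\sigma)$ with $(\rho,\sigma)$ for $u \in \widehat{O}_K^\times$, and $v_\idp(\rho u) = v_\idp(\rho)$ since $u_\idp \in O_\idp^\times$. Next observe that the $G_K$-action leaves the $\widehat{O}_K$-coordinate untouched. Hence $x$ and $x\cdot s$ have the same valuation vector, and elements in one orbit share all $v_\idp$.

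\emph{If.} Suppose $v_\idp(\rho) = v_\idp(\lambda)$ for all $\idp$. Define $u \in \widehat{O}_K^\times$ componentwise as follows.
\begin{itemize}
\item For $\idp$ with finite common valuation, set $u_\idp := \lambda_\idp/\rho_\idp \in O_\idp^\times$.
\item For $\idp$ with $\rho_\idp = \lambda_\idp = 0$, set $u_\idp := 1$.
\end{itemize}
Then $\lambda = \rho u$, so $y = [\rho u, \tau] = [\rho, u\tau]$. Setting $s := \sigma^{-1} u \tau \in G_K^{ab}$, where $u$ is viewed in $G_K^{ab}$ via the Artin map and $G_K^{ab}$ is commutative, gives $y = [\rho,\sigma]\cdot s = x\cdot s$. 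Lifting $s$ to $G_K$ shows that $x$ and $y$ lie in the same $G_K$-orbit.

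The main point needing care is that the isomorphism of Yalkinoglu really transports the $G_K$-action on $DR_K$ (via $G_K^{ab} = DR_K^\times$) to multiplication on the $G_K^{ab}$-factor. That is, the unit $[s]\in DR_K^\times$ must correspond to $[1,s]$, and the monoid product must be componentwise. We take this as part of the cited description. Given that, the equivalence is immediate, and it also recovers Proposition~\ref{classification of regular H class}: the valuation vector of $1_S$ is $0$ on $S$ and $\infty$ off $S$.
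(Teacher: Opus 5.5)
Your proof is correct and follows essentially the same route as the paper: you build a unit $u\in\widehat{O}_K^\times$ componentwise so that $\lambda=\rho u$, then use $[\rho u,\tau]=[\rho,u\tau]$ to place $y$ in the $G_K$-orbit of $x$. You are slightly more careful than the paper in three places: you check that the valuation vector is well defined on the quotient (the direction the paper calls easy, and whose if/only-if labels it swaps), you state the relation consistently as $\lambda=\rho u$, and you handle the components where $\rho_\idp=\lambda_\idp=0$ explicitly.
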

\begin{proof}
The if part is easy; to see the only-if part, suppose that $x=[\rho, \sigma]$ and $y = [\lambda, \tau]$ satisfy the condition $v_\idp(\rho) = v_\idp(\lambda)$ for all $\idp$. Then, for each $\idp$, there exists a unit $u_\idp \in O_\idp^\times$ such that $\rho_\idp = \lambda_\idp u_\idp$; thus, letting $u=(u_\idp)$, we have $\rho = \lambda u$ in $\widehat{O}_K$. Therefore, $[\lambda, \tau] = [\rho u, \tau] = [\rho, u \tau]$, which clearly belongs to the same $G_K$-orbit as $[\rho, \sigma]$. This completes the proof. 
\end{proof}

\begin{lem}
The $G_K$-orbit of $x = [\rho, \sigma]$ contains an idempotent if and only if $(v_\idp(\rho)) \in \prod_\idp \{0, \infty\}$.
\end{lem}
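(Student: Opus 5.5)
The plan is to reduce the statement to Lemma \ref{H-equivalence and valuation} together with the classification of idempotents $e_S = [1_S, 1]$ recalled before Proposition \ref{classification of regular H class}. The valuation vector $(v_\idp(\rho))_\idp \in \prod_\idp (\nat \cup \{\infty\})$ does not depend on the representative $(\rho, \sigma)$ of $x$. Indeed, $(\rho u, u^{-1}\sigma)$ and $(\rho, \sigma)$ have the same valuations for $u \in \widehat{O}_K^\times$. By Lemma \ref{H-equivalence and valuation}, this vector is therefore an invariant that exactly classifies the $G_K$-orbits.

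For the ``only if'' direction, suppose the orbit of $x$ contains an idempotent. By the classification of idempotents, that idempotent is $e_S = [1_S, 1]$ for some $S \subseteq P_K$. Lemma \ref{H-equivalence and valuation} then gives $v_\idp(\rho) = v_\idp(1_S)$ for all $\idp$. The right-hand side is $0$ for $\idp \in S$ (the component is $1 \in O_\idp$) and $\infty$ for $\idp \notin S$ (the component is $0$). Hence $(v_\idp(\rho)) \in \prod_\idp \{0, \infty\}$.

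For the ``if'' direction, assume every $v_\idp(\rho)$ is $0$ or $\infty$, and put $S := \{\idp : v_\idp(\rho) = 0\}$. Then $v_\idp(\rho) = v_\idp(1_S)$ for every $\idp$. Applying the ``if'' part of Lemma \ref{H-equivalence and valuation} to $x = [\rho, \sigma]$ and $e_S = [1_S, 1]$ shows that $x$ lies in the $G_K$-orbit of $e_S$. We should also check directly that $e_S$ is idempotent: $1_S^2 = 1_S$ componentwise and $1 \cdot 1 = 1$, so $e_S^2 = e_S$.

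I do not expect a real obstacle, since both directions are immediate from Lemma \ref{H-equivalence and valuation}. The only point needing care is the convention $v_\idp(0) = \infty$ and the fact that valuations are well defined on classes in $\widehat{O}_K \times_{\widehat{O}_K^\times} G_K^{ab}$. If one prefers not to rely on the ``if'' part of Lemma \ref{H-equivalence and valuation}, there is a direct argument. Write $\rho = 1_S u$ with $u \in \widehat{O}_K^\times$, taking $u_\idp = \rho_\idp$ for $\idp \in S$ and $u_\idp = 1$ otherwise. Then $[\rho, \sigma] = [1_S, u\sigma]$, which is a $G_K$-translate of $e_S$.
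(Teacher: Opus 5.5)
Your proof is correct and follows the paper's route. The paper's one-line argument is exactly that the valuation vectors $(v_\idp(1_S))$ of the idempotents $e_S = [1_S, 1]$ exhaust $\prod_\idp \{0, \infty\}$, used implicitly together with Lemma~\ref{H-equivalence and valuation} and the classification of idempotents of $DR_K$; you spell out both directions (plus a harmless direct check via $\rho = 1_S u$) that the paper leaves implicit.
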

\begin{proof}
This follows from the fact that $\prod_\idp \{0, \infty\}$ is equal to the set of the values $(v_\idp (1_S))$ where $S$ ranges over all subsets of $P_K$. 
\end{proof}

\begin{prop}[classification of general $\HH$-classes]
\label{classification of general H class}
The $\HH$-class of $x = [\rho, \sigma]$ is isomorphic (as a profinite $G_K$-set) to the maximal abelian Galois group $G_{K, S}^{ab}$ with $S = \{\idp \in P_K \mid v_\idp(x) \neq \infty\}$. 
\end{prop}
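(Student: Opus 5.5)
By the Proposition that $\HH$-classes are $G_K$-orbits, the $\HH$-class of $x = [\rho, \sigma]$ is the orbit $x \cdot G_K = x \cdot G_K^{ab}$. Since the action factors through $G_K^{ab}$, I will show that the stabilizer of $x$ in $G_K^{ab}$ is the closed subgroup generated by the images of the local unit groups $O_\idp^\times$ for $\idp \notin S$. Here $S = \{\idp \mid v_\idp(\rho) \neq \infty\}$, i.e.\ the primes at which $\rho_\idp \neq 0$. Class field theory identifies the quotient of $G_K^{ab}$ by this subgroup with $G_{K,S}^{ab}$. The orbit map $G_K^{ab} \to x\cdot G_K^{ab}$ is then a continuous bijection from the compact space $G_K^{ab}/\mathrm{Stab}(x)$ onto a Hausdorff space, hence a homeomorphism and an isomorphism of profinite $G_K$-sets.

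\textbf{Step 1: the stabilizer contains the units outside $S$.} Let $u \in \widehat{O}_K^\times$ have $u_\idp = 1$ for $\idp \in S$. For $\idp \notin S$ we have $\rho_\idp = 0$, so $\rho u = \rho$. The defining relation $[\rho u, \sigma] = [\rho, u\sigma]$ of $\widehat{O}_K \times_{\widehat{O}_K^\times} G_K^{ab}$ then gives $x \cdot [u] = [\rho, u\sigma] = [\rho u, \sigma] = x$. This is the same computation as in the proof of Proposition \ref{classification of regular H class}.

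\textbf{Step 2: the reverse inclusion, which I expect to be the main obstacle.} Suppose $[\rho, \sigma s] = [\rho, \sigma]$ for some $s \in G_K^{ab}$. I must show that $s$ lies in the image of $\prod_{\idp \notin S} O_\idp^\times$. The equivalence relation on $\widehat{O}_K \times G_K^{ab}$ is the orbit relation of the group $\widehat{O}_K^\times$, so some $u$ satisfies $\rho u = \rho$ and $u^{-1}\sigma s = \sigma$, i.e.\ $s = [u]$. The condition $\rho u = \rho$ forces $u_\idp = 1$ whenever $\rho_\idp \neq 0$, that is, for every $\idp \in S$, because $O_\idp$ is a domain. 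Hence $u \in \prod_{\idp\notin S} O_\idp^\times$, as required.

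The subtle point is that the quotient topology on the profinite quotient might in principle add limit identifications. To rule this out, I would use that the action of the compact group $\widehat{O}_K^\times$ on a compact Hausdorff space has a closed orbit relation, so the quotient is Hausdorff and its points are exactly the orbits. Alternatively, I could check the claim finite-level by finite-level through $DR_\idf$ and the Remark identifying the orbit $[\idd]\cdot DR_\idf^\times$ with $C_{\idf/\idd}$, and then pass to the limit. With the stabilizer identified, class field theory gives $G_K^{ab}/\overline{\langle O_\idp^\times : \idp \notin S\rangle} \simeq G_{K,S}^{ab}$, which completes the argument.

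Finally, by Lemma \ref{H-equivalence and valuation}, $S$ depends only on the $\HH$-class, so the statement is well posed. The regular case is the instance $\rho = 1_S$ and agrees with Proposition \ref{classification of regular H class}.
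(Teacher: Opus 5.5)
Your proof is correct and follows the paper's route. The paper reduces this statement to the argument for the regular case: in the adelic description, the local units at $\idp \notin S$ are absorbed by the vanishing components of $\rho$, so the orbit is $G_K^{ab}$ modulo the image of $\prod_{\idp \notin S} O_\idp^\times$, which is $G_{K,S}^{ab}$ by class field theory. Your explicit reverse inclusion for the stabilizer (using that $O_\idp$ is a domain) and your compactness argument for the orbit map supply details the paper leaves implicit.
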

\begin{proof}
The proof is essentially the same as that of Proposition \ref{classification of regular H class}. The slight difference is only the fact that non-regular $\HH$-classes are not closed under multiplication; thus the isomorphism is not as a profinite group but only as a profinite $G_K$-set. 
\end{proof}

As seen above, the $\HH$-classes are the $G_K$-orbits of $DR_K$; but by the monoid structure of $DR_K$, they constitute a poset whose adjacent relation is described by the action of non-invertible Frobenius elements $\psi_\idp \in DR_K$. To see this aspect, firstly let $x, y \in DR_K$ be two elements; we write as $x \leq y$\footnote{This relation is equivalent to the conventional quasi-ordering $x \leq_\JJ y$ (and also, $x \leq_\RR$ and $x \leq_\LL y$); see \S \ref{a1}. We omit the subscript because of the commutativity of $DR_K$.} if there exists $z \in DR_K$ such that $x = y z$; then, we can see that $x$ and $y$ are $\HH$-equivalent if and only if $x \leq y$ and $y \leq x$. Therefore, it makes sense to define $H_1 \leq H_2$ for two $\HH$-classes $H_1, H_2 \subseteq DR_K$ when $x_1 \leq x_2$ for their representatives $x_1 \in H_1$ and $x_2 \in H_2$. In this sense, the $\HH$-classes constitute a poset. 

To describe this poset structure, we further denote as $H_1 \triangleleft H_2$ when (1) $H_1 \leq H_2$, (2) $H_1 \neq H_2$, and (3) $H_1 \leq H' \leq H_2$ implies $H'=H_1$ or $H' = H_2$. Then we have the following characterization of this relation $\triangleleft$ in terms of the Frobenius elements $\psi_\idp$:

\begin{prop}[adjacent relation $\triangleleft$ of $\HH$-classes]
For two $\HH$-classes $H_1\neq H_2$, we have $H_1 \triangleleft H_2$ if and only if there exists some $\idp \in P_K$ such that $H_1 = \psi_\idp H_2$.
\end{prop}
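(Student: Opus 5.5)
Since $DR_K$ is commutative, Lemma \ref{H-equivalence and valuation} lets us encode $\HH$-classes by valuation vectors. Each $\HH$-class $H$ of $x=[\rho,\sigma]$ is determined by $v(H) := (v_\idp(\rho))_\idp \in \prod_{\idp}(\nat\cup\{\infty\})$. The plan is to show that the order $\leq$ on $\HH$-classes becomes the componentwise order on these vectors (reversed: $H_1 \leq H_2$ iff $v(H_1)\geq v(H_2)$ componentwise). With that in hand, the adjacency relation $\triangleleft$ becomes the covering relation of a product of chains, and the claim follows.

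\textbf{Step 1 (order = componentwise valuations).} Suppose $x = yz$ with $x=[\rho,\sigma]$, $y=[\lambda,\tau]$, $z=[\mu,\nu]$. Multiplication in $\widehat{O}_K\times_{\widehat{O}_K^\times} G_K^{ab}$ is componentwise, so $\rho = \lambda\mu u$ for some unit $u$. Hence $v_\idp(\rho)=v_\idp(\lambda)+v_\idp(\mu)\ge v_\idp(\lambda)$ for all $\idp$.

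Conversely, suppose $v_\idp(\rho)\ge v_\idp(\lambda)$ for all $\idp$. Componentwise in $\prod_\idp O_\idp$ we can choose $\mu$ with $\rho=\lambda\mu$: take $\mu_\idp=\rho_\idp/\lambda_\idp$ when $\lambda_\idp\neq 0$, and $\mu_\idp=0$ when $\lambda_\idp=0$ (in which case $\rho_\idp=0$ as well). Then $yz$ with $z=[\mu,\tau^{-1}\sigma]$ equals $[\rho,\sigma]=x$. The only care needed is that $\mu\in\widehat{O}_K$, which holds componentwise. This step also reconfirms that $\HH$-classes correspond bijectively to the vectors in $\prod_\idp(\nat\cup\{\infty\})$, each vector being realized by some $\rho$.

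\textbf{Step 2 (covering relation).} Take $v(H_1)\ge v(H_2)$ with $v(H_1)\neq v(H_2)$. If two or more coordinates differ, or one coordinate differs by at least $2$, we construct an intermediate vector strictly in between. For a difference of $\ge 2$ (including the case $\infty$ versus a finite value $n$), use the value $n+1$ at that coordinate. For two differing coordinates, change just one of them. The intermediate vector is realized by some $\HH$-class $H'$ by Step 1, so $H_1$ does not cover $H_2$. Hence $H_1\triangleleft H_2$ iff the vectors differ in exactly one coordinate $\idp$, with $v_\idp(H_1)=v_\idp(H_2)+1$ and $v_\idp(H_2)$ finite.

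\textbf{Step 3 (identify with $\psi_\idp$).} Under Yalkinoglu's isomorphism (\ref{partly adelic DR_K}), $\psi_\idp=[\pi_\idp, s]$ for some idele class $s$, where $\pi_\idp$ is a uniformizer in the $\idp$-slot and $1$ elsewhere. Thus multiplying by $\psi_\idp$ adds $1$ to $v_\idp$ and leaves the other coordinates unchanged, since $\infty+1=\infty$. So $v(\psi_\idp H_2)=v(H_2)+\delta_\idp$, and $\psi_\idp H_2$ is a full $\HH$-class: it is contained in the class with that vector, and by Lemma \ref{H-equivalence and valuation} that whole class is reached through the $G_K$-action, which commutes with $\psi_\idp$.

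If $v_\idp(H_2)$ is finite, Step 2 gives $\psi_\idp H_2\triangleleft H_2$. If $v_\idp(H_2)=\infty$, then $\psi_\idp H_2=H_2$, which is excluded by the hypothesis $H_1\ne H_2$. Conversely, if $H_1\triangleleft H_2$, the coordinate $\idp$ from Step 2 gives $H_1=\psi_\idp H_2$.

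The main obstacle I expect is pinning down the adelic representative of $\psi_\idp$. Concretely, one must check that the embedding $I_K\to DR_K$ sends $\idp$ to an element whose $\widehat{O}_K$-component has valuation vector $\delta_\idp$. I would verify this via the finite level $DR_\idf$ using the $\sim_\idf$ description with gcds $(\ida,\idf)$.
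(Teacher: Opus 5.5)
Your proposal is correct and follows the same route as the paper. Both identify the poset of $\HH$-classes with $\prod_\idp(\nat\cup\{\infty\})$ under the reversed componentwise order, characterize $\triangleleft$ as $v(H_1)=v(H_2)+e_\idp$ with $v_\idp(H_2)$ finite, and use that $\psi_\idp$ has $\widehat{O}_K$-component $\pi_\idp$ (the paper simply quotes $\psi_\idp=[\pi_\idp,\pi_\idp^{-1}]$); your Steps 1 and 2 spell out the order identification and the covering computation that the paper only asserts from the valuation lemma.
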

\begin{proof}
By Lemma \ref{H-equivalence and valuation}, the poset of $\HH$-classes is isomorphic to $\prod_\idp (\nat \cup \{\infty\})$ whose poset structure is defined naturally with component-wise (natural) orders. Thus, for two $\HH$-classes $H_1\neq H_2$ which correspond to $v_1 = (v_{1, \idp}), v_2 = (v_{2, \idp}) \in \prod_\idp (\nat \cup \{0\})$, we see that $H_1 \triangleleft H_2$ if and only if $v_1 = v_2 + e_\idp$ for some $\idp \in P_K$ with $v_{2, \idp} \neq \infty$, where $e_\idp \in \prod_\idp (\nat \cup \{0\})$ is $1$ at $\idp$, while $0$ at $\idq \neq \idp$. Recalling that $\psi_\idp = [\pi_\idp, \pi_\idp^{-1}]$, the latter implies the equality $H_1 = \psi_\idp H_2$ for such a $\idp$. Conversely, if $H_1 \neq H_2$ and $H_1 = \psi_\idp H_2$, then $H_2 \neq \psi_\idp H_2$, whence $v_{2, \idp} \neq \infty$. Again, by $\psi_\idp = [\pi_\idp, \pi_\idp^{-1}]$, we have $v_1 = v_2 + e_\idp$ for the corresponding $v_1, v_2$, which implies the desired adjacent relation $H_1 \triangleleft H_2$. This completes the proof. 
\end{proof}

In general, we shall write as $H_v \subseteq DR_K$ for $v \in \prod_\idp (\nat \cup \{\infty\})$ the $\HH$-class of $[\rho_v, 1] \in DR_K$ such that $v_\idp(\rho_v) = v_\idp$; then, we have $H_{v_1} \leq H_{v_2}$ if and only if $v_2 \leq v_1$. Conversely, we shall write as $v_H$ for the element of $\prod_\idp (\nat \cup \{\infty\})$ that corresponds to the $\HH$-class $H$. With this notation, we define $S_H \subseteq P_K$ for each $\HH$-class $H$ by $S_H := \{\idp \in P_K \mid (v_H)_\idp \neq \infty \}$, which we shall call the \emph{support (of ramifications)} of $H$ (cf.\ Proposition \ref{stability} below).

As observed above, the action of the Frobenius elements $\psi_\idp$ characterizes the adjacent relation $H_1 \triangleleft H_2$ for \emph{distinct} $\HH$-classes $H_1 \neq H_2$; in general, however, their actions do not necessarily change the $\HH$-classes. In fact, the stability of an $\HH$-class $H$ under the Frobenius action $\psi_\idp$ is characterized by the infiniteness of the valuation $(v_H)_\idp = \infty$ at $\idp$, which is precisely when $\idp$ is unramified on this $G_K$-orbit component $H$: 

\begin{prop}[$\psi_\idp$-stability of an $\HH$-class]
\label{stability}
For an $\HH$-class $H \subseteq P_K$, we have $H = \psi_\idp H$ if and only if $\idp \not \in S_H$; in particular, when this is the case, the action of $\psi_\idp$ on $H$ coincides with that of the Frobenius $\sigma_\idp \in G_{K, S_H}^{ab}$. 
\end{prop}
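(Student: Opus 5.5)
We argue in the adelic description (\ref{partly adelic DR_K}) and use Lemma \ref{H-equivalence and valuation}, which lets us read off an $\HH$-class from its valuation vector $v_H \in \prod_\idp(\nat\cup\{\infty\})$. Fix a representative $x = [\rho, \sigma] \in H$, so that $v_\idp(\rho) = (v_H)_\idp$ for all $\idp$. Throughout we use $\psi_\idp = [\pi_\idp, \pi_\idp^{-1}]$, where $\pi_\idp$ is a uniformizer placed in the $\idp$-component (and $1$ elsewhere), as in the proof of the proposition on the adjacent relation. The product is then
\begin{eqnarray*}
 \psi_\idp x &=& [\pi_\idp \rho, \pi_\idp^{-1}\sigma],
\end{eqnarray*}
whose valuation vector is $v_H + e_\idp$, with the convention $\infty + 1 = \infty$. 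By Lemma \ref{H-equivalence and valuation}, $\psi_\idp H = H$ holds if and only if $v_H + e_\idp = v_H$, that is, if and only if $(v_H)_\idp = \infty$, i.e.\ $\idp \notin S_H$. This settles the first assertion. Note that $\psi_\idp H$ is indeed an $\HH$-class, namely the $G_K$-orbit of $\psi_\idp x$, because multiplication by $\psi_\idp$ commutes with the $G_K$-action by commutativity.

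For the second assertion, assume $\idp \notin S_H$, so that $\rho_\idp = 0$. Then $\pi_\idp \rho = \rho$ in $\widehat O_K$, and hence
\begin{eqnarray*}
 \psi_\idp [\rho, \sigma] &=& [\rho, \pi_\idp^{-1}\sigma].
\end{eqnarray*}
Thus $\psi_\idp$ acts on $H$ through the group component, as translation by the idèle $\pi_\idp^{-1}$ (the $\idp$-component, regarded in $G_K^{ab}$). By Proposition \ref{classification of general H class}, $H \simeq G_{K,S_H}^{ab}$ as $G_K$-sets. As in the proof of Proposition \ref{classification of regular H class}, this identification arises because $[\rho, \sigma] = [\rho, u\sigma]$ for every $u \in \widehat O_K^\times$ with $u_\idq = 1$ for $\idq \in S_H$: indeed, $\rho u = \rho$ since $\rho_\idq = 0$ off $S_H$. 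Hence the group component is only defined modulo $\prod_{\idq \notin S_H} O_\idq^\times$, and we obtain $G_K^{ab}/\prod_{\idq\notin S_H}[O_\idq^\times] = G_{K,S_H}^{ab}$. For $\idp \notin S_H$, the image of the local idèle $\pi_\idp$ in $G_{K,S_H}^{ab}$ is independent of the choice of uniformizer, precisely because $O_\idp^\times$ has been killed. By local–global class field theory, this image is the Frobenius $\sigma_\idp$ at the prime $\idp$, which is unramified in $K_{S_H}^{ab}$.

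The main obstacle is the bookkeeping of normalizations, namely whether $\pi_\idp^{-1}$ gives $\sigma_\idp$ or $\sigma_\idp^{-1}$. This depends on the convention for the Artin map $[-]$ and on the choice $\psi_\idp = [\pi_\idp, \pi_\idp^{-1}]$. We will check it against the finite level: in $DR_\idf$ with $\idp \nmid \idf$, $\psi_\idp$ is the class $[\idp] \in C_\idf = DR_\idf^\times$, which corresponds to $\frob_\idp$ under Artin reciprocity. We then transport this through Yalkinoglu's isomorphism (\ref{partly adelic DR_K}), so that the conventions are forced to match. A second point is that the statement ``the action of $\psi_\idp$ on $H$ coincides with that of $\sigma_\idp$'' should be read via the $G_{K,S_H}^{ab}$-set structure on $H$ from Proposition \ref{classification of general H class}. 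Concretely, for every $y \in H$ we will verify $\psi_\idp y = y \cdot \tilde\sigma_\idp$, where $\tilde\sigma_\idp \in G_K$ is any lift of $\sigma_\idp$; this is well-defined by the preceding paragraph.
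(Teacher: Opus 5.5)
Your proposal is correct and follows the paper's argument: the first claim comes from the valuation vector $v_H + e_\idp$ together with Lemma \ref{H-equivalence and valuation}, and the second comes from absorbing $\pi_\idp$ into the vanishing $\idp$-component, so that $\psi_\idp$ acts as $[1,\pi_\idp^{-1}]$. The differences are cosmetic: the paper does this absorption on the idempotent $e_{S_H}$ (using $H = e_{S_H}H$) rather than on a representative $[\rho,\sigma]$, and it simply cites the convention $\sigma \mapsto [1,\sigma^{-1}]$ where you propose an explicit normalization check.
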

\begin{proof}
By rephrasing the first claim in terms of $v_H \in \prod_\idp (\nat \cup \{\infty\})$, we can see that $H = \psi_\idp H$ if and only if $(v_H)_\idp = \infty$, hence the first claim. For the second claim, notice that $H = e_{S_H} H$, where $e_{S_H} = [1_{S_H}, 1]$ is the idempotent corresponding to $S_H$; by $\idp \not \in S_H$, we deduce $\psi_\idp H = [\pi_\idp, \pi_\idp^{-1}] H = [\pi_\idp, \pi_\idp^{-1}] \cdot [1_{S_H}, 1] H = [1, \pi_\idp^{-1}] [1_{S_H}, 1] H = \sigma_\idp^{-1} H$, hence the claim. (Here recall that $\sigma \in G_K$ acts on $DR_K = \widehat{O}_K \times_{\widehat{O}_K^\times} G_K^{ab}$ by $\sigma \mapsto [1, \sigma^{-1}]$; see e.g.\ Proposition 8.2 \cite{Yalkinoglu}.)
\end{proof}

These propositions describe how the Frobenius elements $\psi_\idp$ act on the poset of the $\HH$-classes of $DR_K$; in particular, note that a single action of $\psi_\idp$ preserves the isomorphism class of profinite $G_K$-sets: if $H_1 = \psi_\idp H_2$, we have $S_{H_1} = S_{H_2} =: S$, thus, the $\HH$-classes $H_1, H_2 \subseteq DR_K$ are isomorphic as profinite $G_K$-sets by the $G_K$-equivariant map $\psi_\idp: H_2 \rightarrow H_1$ within $DR_K$ (in fact, both isomorphic to $G_{K, S}^{ab}$). 

In general, however, a \emph{degeneration} of $\HH$-classes may occur when we take \emph{limits} of the Frobenius actions: As the simplest case, let $H \subseteq DR_K$ be an $\HH$-class with $\emptyset \neq S_H =: S \subseteq P_K$; and let $\idp \in S$. As seen above, the $\HH$-classes $\psi_\idp^n H$ have the same support $S \subseteq P_K$ for any $n \in \nat$, so that they are all isomorphic to $G_{K, S}^{ab}$ as $G_K$-sets; but the $\HH$-class $\psi_\idp^\omega H$, where $\psi_\idp^\omega = \lim_n \psi_\idp^{n!}$, has a smaller support $S' := S \setminus \{\idp\}$ so that it degenerate as a $G_K$-set to the smaller Galois group $G_{K, S'}^{ab}$. More generally, the idempotents $e_S$ of $DR_K$ ($S \subseteq P_K$) are all represented as suitable product of such idempotents $\psi_\idp^{\omega}$, that is, $e_S = \prod_{\idp \not \in S} \psi_\idp^\omega$; and the actions of these idempotents $e_S = \prod_{\idp \not \in S} \psi_\idp^{\omega}$ induce degenerations of the group structure of $G_{K, S_H}^{ab}$ for each $\HH$-class $H$: to be more specific, if $H' = e_S \cdot H$, then $H'$ has the support $S_{H'} = S_H \setminus S$ of ramifications (smaller than $S_H$ in general); thus, the $\HH$-class $e_H \cdot H$ is isomorphic to the (generally smaller) Galois group $G_{K, S_{H'}}^{ab}$ as a $G_K$-set. In this way, the $\HH$-classes of $DR_K$ may degenerate in $DR_K$ by the limits of the actions of Frobenius elements $\psi_\idp$.

\subsubsection{The local structure of $DR_\idf$ and their refinement}
\label{s2.2.2}
To obtain a better understanding of this poset structure of $\HH$-classes in $DR_K$, it is helpful to recall the similar structure of its finite quotients $DR_\idf$ (cf.\ \S \ref{s2.1}). In fact, the above ramification-theoretic structure of the poset of $\HH$-classes in $DR_K$ can be understood more finely as the limit of the posets of $\HH$-classes of $DR_\idf$'s: As we highlight below, the posets of $\HH$-classes of $DR_\idf$'s reflect the structure of filtration of \emph{higher ramification subgroups} of $Gal(K_\idf/K)$. 

To this end, we begin with recalling yet another description of $DR_\idf$'s \cite{Yalkinoglu}, and extend it firstly to finite monoids $DR_{L/K}$ indexed by finite abelian extensions $L/K$, which will also converge to $DR_K$, that is, $DR_K = \lim_{L/K} DR_{L/K}$. By Proposition 8.2 \cite{Yalkinoglu}, $DR_\idf$ has the following description for each conductor $\idf \in I_K$:
\begin{eqnarray}
\label{adelic DR_f}
 DR_\idf &\simeq& (O_K/\idf) \times_{(O_K/\idf)^\times} C_\idf.
\end{eqnarray}
The adelic description of $DR_K$ is given by taking the inverse limit of (\ref{adelic DR_f}). In this description, we can replace $C_\idf$ with the (isomorphic) Galois group $Gal(K_\idf/K)$ so that $(O_K/\idf) \times_{(O_K/\idf)^\times} Gal(K_\idf/K)$ makes sense for any abelian Galois groups $Gal(L/K)$. Indeed, let $L/K$ be a finite abelian extension of $K$ and $\idf_{L/K} \in I_K$ be the conductor of $L/K$. Then, similarly to the right-hand side of (\ref{adelic DR_f}), we define a finite monoid $DR_{L/K}$ as follows:
\begin{eqnarray}
 DR_{L/K} &:=& (O_K/ \idf_{L/K}) \times_{(O_K/\idf_{L/K})^\times} Gal(L/K)
\end{eqnarray}
where the group $(O_K/\idf_{L/K})^\times$ acts on the monoid $(O_K/\idf_{L/K}) \times Gal(L/K)$ as $u \cdot (\rho, \sigma) = (\rho u, [u]^{-1} \sigma)$ for $u \in (O_K/\idf_{L/K})^\times = \prod_\idp (O_\idp/\idp^{v_\idp (\idf_{L/K})})^\times$, $\rho \in O_K/\idf_{L/K}$ and $\sigma \in Gal(L/K)$. Note that this action of $u$ is well-defined since $u \equiv u' \mod \idf_{L/K}$ if and only if $u = \lambda u'$ for some $\lambda \in \prod_\idp U_\idp^{(v_\idp (\idf_{L/K}))}$ where $U_\idp^{(v_\idp(\idf_{L/K}))}$ denotes the group of $v_\idp(\idf_{L/K})$-th local units which vanishes in $Gal(L/K)$ by definition of the conductor $\idf_{L/K}$. Since the conductor of the ray class field $K_\idf$ is not generally equal to $\idf$, the new  monoids $DR_{L/K}$ do not really ``refine'' $DR_\idf$'s in that $DR_{K_\idf/K}$ is not actually the same as $DR_\idf$; but $DR_{L/K}$ satisfies the same ramification-theoretic properties as $DR_\idf$, and also approximates $DR_K$ in the following sense:

\begin{prop}
The above finite monoids $DR_{L/K}$ satisfy the following properties: 
\begin{enumerate}
\item $DR_{L/K}^\times = Gal(L/K)$; 
\item there exists a surjective homomorphism $I_K \twoheadrightarrow DR_{L/K}$ such that $\idp \in P_K$ is unramified in $L/K$ if and only if $\idp$ is invertible in $DR_{L/K}$; 
\item there exists a surjective homomorphism $DR_{F/K} \twoheadrightarrow DR_{L/K}$ for finite extensions $F \supseteq L \supseteq K$ such that $\{DR_{L/K}\}$ constitute an inverse system whose limit is isomorphic to $DR_K$.
\end{enumerate}
\end{prop}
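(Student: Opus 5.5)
We prove the three items one at a time, throughout using the structure of $DR_{L/K} = (O_K/\idf) \times_{(O_K/\idf)^\times} Gal(L/K)$ with $\idf := \idf_{L/K}$. Both the Chinese remainder decomposition $O_K/\idf = \prod_{\idp \mid \idf} O_\idp/\idp^{n_\idp}$, where $n_\idp = v_\idp(\idf)$, and the local Artin maps $O_\idp^\times \to Gal(L/K)$ will be used.

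For (1), an element $[\rho, \sigma]$ is invertible exactly when some $[\rho', \sigma']$ satisfies $[\rho\rho', \sigma\sigma'] = [1,1]$. This forces $\rho\rho' \in (O_K/\idf)^\times$, so $\rho$ is a unit. Conversely, if $\rho$ is a unit, then $[\rho, \sigma] = [1, [\rho]\sigma]$. The map $Gal(L/K) \to DR_{L/K}^\times$, $\sigma \mapsto [1,\sigma]$, is therefore surjective. It is injective because $[1,\sigma] = [1,\tau]$ means $(1,\tau) = (u, [u]^{-1}\sigma)$ for some unit $u$, which forces $u = 1$ in $O_K/\idf$ and hence $\sigma = \tau$.

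For (2), send an ideal $\ida \in I_K$ to $[\alpha_\ida, [\alpha_\ida]^{-1}]$. Here $\alpha_\ida \in \widehat{O}_K$ is any finite idele generating $\ida \widehat{O}_K$, $[\alpha_\ida]$ is its Artin symbol restricted to $L$, and the first coordinate is $\alpha_\ida \bmod \idf$. This is the finite analogue of $\psi_\idp = [\pi_\idp, \pi_\idp^{-1}]$. Changing $\alpha_\ida$ by $u \in \widehat{O}_K^\times$ changes the pair by exactly the $\widehat{O}_K^\times$-action, which factors through $(O_K/\idf)^\times$ on both coordinates because $\prod_\idp U_\idp^{(n_\idp)}$ dies in $Gal(L/K)$. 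So the map is well defined, and it is clearly multiplicative.

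Surjectivity is the main obstacle. Given $[\rho, \sigma]$, lift $\rho$ to $\alpha \in \widehat{O}_K$ whose components at $\idp \nmid \idf$ are units and whose components at $\idp \mid \idf$ have the valuations of $\rho$ (capped below $n_\idp$). We then need an ideal $\ida$ with $\alpha_\ida \equiv \alpha \cdot(\text{unit})$ and Artin class matching $\sigma$. To get it, multiply $\alpha$ by an idele $\beta$ that is $\equiv 1 \bmod \idf$ at the primes dividing $\idf$, is a uniformizer at an auxiliary prime $\idq \nmid \idf$, and is chosen so that the Artin symbol of $\idq$ realizes the required class. Chebotarev's density theorem (or surjectivity of $I_K^\idf \to Gal(L/K)$ from class field theory) supplies such $\idq$. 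This mirrors the surjectivity $I_K \to DR_\idf$.

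The unramified criterion in (2) follows from (1) together with the local computation. If $\idp \nmid \idf$, then $\pi_\idp$ is a unit mod $\idf$, so the image of $\idp$ lies in $DR^\times_{L/K}$. If $\idp \mid \idf$, the first coordinate $\pi_\idp$ is a non-unit in $O_\idp/\idp^{n_\idp}$ with $n_\idp \ge 1$, so the image of $\idp$ is not invertible. Finally, by the conductor–discriminant theory, $\idp \mid \idf_{L/K}$ holds iff $\idp$ ramifies in $L$.

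For (3), take $F \supseteq L \supseteq K$ with both $F/K$ and $L/K$ abelian. Then $\idf_{L/K} \mid \idf_{F/K}$, and we define $[\rho, \sigma] \mapsto [\rho \bmod \idf_{L/K}, \sigma|_L]$. This respects the unit action because restriction commutes with the Artin map, and it is surjective because both coordinate maps are. These maps are compatible with the maps of (2), so $\{DR_{L/K}\}$ forms an inverse system. Its limit is
\[ \lim_{L} (O_K/\idf_{L/K}) \times_{(O_K/\idf_{L/K})^\times} Gal(L/K), \]
and since the conductors $\idf_{L/K}$ are cofinal in $I_K$ under divisibility (ray class fields have conductor dividing $\idf$ and every $\idf$ divides some conductor), this limit agrees with $\widehat{O}_K \times_{\widehat{O}_K^\times} G_K^{ab} \simeq DR_K$ by (\ref{partly adelic DR_K}).

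For that last identification, we compare with $\lim_\idf DR_\idf$ via (\ref{adelic DR_f}) and a cofinality argument. This requires that inverse limits commute with the orbit quotients, which holds because each orbit quotient is by a profinite (compact) group acting continuously.
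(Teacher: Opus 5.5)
Your proof is correct. Item (1) and the invertibility criterion in (2) coincide with the paper's argument. The paper's ``the first claim follows similarly'' is exactly your computation that $[\rho,\sigma]$ is a unit iff $\rho\in(O_K/\idf)^\times$, and both versions end with the fact that the primes dividing $\idf_{L/K}$ are exactly the ramified ones.

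The routes differ in two places.

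\emph{Surjectivity of $I_K\twoheadrightarrow DR_{L/K}$.} The paper gets this for free. It composes the defining quotient $I_K\twoheadrightarrow DR_\idf$ (with $\idf=\idf_{L/K}$) with the map $DR_\idf\simeq (O_K/\idf)\times_{(O_K/\idf)^\times}C_\idf\twoheadrightarrow DR_{L/K}$ induced by $C_\idf\simeq Gal(K_\idf/K)\twoheadrightarrow Gal(L/K)$, using $L\subseteq K_\idf$. You instead define the map by the explicit formula $\ida\mapsto[\alpha_\ida,[\alpha_\ida]^{-1}]$ and prove surjectivity by hand, using an auxiliary prime $\idq\nmid\idf$ with prescribed Frobenius. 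In effect you reprove the surjective half of Yalkinoglu's isomorphism (\ref{adelic DR_f}) at level $L$. This costs Chebotarev, or the fact that every ray class contains an integral ideal prime to $\idf$. In exchange, the formula $\idp\mapsto[\pi_\idp,\pi_\idp^{-1}]$, on which the invertibility criterion rests, becomes a definition rather than something read off from the identification.

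\emph{The limit in (3).} The paper interleaves the two inverse systems. It uses $DR_{\idf_{L/K}}\twoheadrightarrow DR_{L/K}$, and $DR_{L/K}\twoheadrightarrow DR_\idf$ whenever $K_\idf\subseteq L$ and $\idf\mid\idf_{L/K}$. The limit is then $\lim_\idf DR_\idf=DR_K$ by the original definition, with no statement about limits of orbit spaces. You compute the limit directly as $\widehat{O}_K\times_{\widehat{O}_K^\times}G_K^{ab}$ via the compactness argument and then invoke (\ref{partly adelic DR_K}). That relies on Yalkinoglu's global description instead.

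\emph{A shared step that needs justification.} Both versions use that every $\idf$ divides some $\idf_{L/K}$, and both merely assert it. It is true, but ray class fields of $\idp$-power modulus do not suffice in general. For $K$ real quadratic and $\idp$ split, the totally positive units have open closure in $O_\idp^\times=\mathbb{Z}_p^\times$, so $C_{\idp^N}$ stays bounded in $N$. One needs the injectivity of $O_\idp^\times\to G_K^{ab}$, which follows from Chevalley's theorem on congruence subgroups of units.
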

\begin{proof}
By definition, it is not hard to see that, for any $L/K$, there exists a surjective homomorphism $DR_\idf \twoheadrightarrow DR_{L/K}$ with $\idf = \idf_{L/K}$; combined with the quotient $I_K \twoheadrightarrow DR_\idf$, we obtain $I_K \twoheadrightarrow DR_{L/K}$. To complete the proof of the second claim, let $\pi_\idp$ be a uniformizer of $\idp \in P_K$; then $\idp = [\pi_\idp, \pi_\idp^{-1}]$ in $DR_K = \widehat{O}_K \times_{\widehat{O}_K^\times} G_K^{ab}$. Note that $[\pi_\idp, \pi_\idp^{-1}]$ is invertible in the finite quotient $DR_{L/K}$ if and only if its first component $\pi_\idp \in \widehat{O}_K$ gets invertible in the multiplicative monoid $O_K/\idf_{L/K}$, namely, if and only if $\idp$ does not divide $\idf_{L/K}$, hence the second claim; the first claim follows similarly. Concerning the third claim, its first half is clear by definition; to see the second half, notice that we have not only the quotient $DR_{\idf_{L/K}} \twoheadrightarrow DR_{L/K}$ for each $L/K$ but conversely $DR_{L/K} \twoheadrightarrow DR_\idf$ for each $\idf \in I_K$ as well with some finite abelian extension $L/K$. (Just take $L/K$ so that $\idf \mid \idf_{L/K}$ and $K_\idf \subseteq L$.) This completes the proof. 
\end{proof}

\begin{prop}[Galois-orbit decomposition of $DR_{L/K}$]
Let $L/K$ be a finite abelian extension and $\idf = \idf_{L/K}$ be the conductor of $L/K$. Then the finite monoid $DR_{L/K}$ has the following $G_K$-orbit decomposition in the category $\cl G_K$ of finite $G_K$-sets:
\begin{eqnarray}
\label{decomposition of D_LK}
 DR_{L/K} &=& \coprod_{\idd \mid \idf} Gal(L^{(\idf/\idd)}/K)
\end{eqnarray}
where $L^{(\idf/\idd)}$ denotes the intermediate field of $L/K$ corresponding to the quotient of $Gal(L/K)$ by the composition of the ramification subgroups $Gal^{e_\idp}(L_\idp/K_\idp)$ in the upper numbering for $e_\idp = v_\idp(\idf/\idd)$. 
\end{prop}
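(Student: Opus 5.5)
Since $G_K$ acts on $DR_{L/K}=(O_K/\idf)\times_{(O_K/\idf)^\times} Gal(L/K)$ only through the second factor, via $G_K\twoheadrightarrow Gal(L/K)$, the plan is to first classify the orbits by their first coordinate and then compute the stabilizer of each orbit.

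\emph{Orbits.} Write $\idf=\prod_\idp \idp^{f_\idp}$, so that $O_K/\idf\simeq\prod_\idp O_\idp/\idp^{f_\idp}$. Every element $\rho$ of this ring is $\pi_\idd u$, where $u$ is a unit and $\idd=\prod_\idp\idp^{d_\idp}$ with $d_\idp=\min(v_\idp(\rho),f_\idp)$. The divisor $\idd\mid\idf$ is uniquely determined by $\rho$, and $\pi_\idd=(\pi_\idp^{d_\idp})_\idp$. Arguing exactly as in Lemma \ref{H-equivalence and valuation}, we have $[\rho,\sigma]=[\pi_\idd,[u]^{-1}\sigma]$. Moreover, $[\rho,\sigma]$ and $[\lambda,\tau]$ lie in the same $G_K$-orbit if and only if $\rho$ and $\lambda$ define the same $\idd$. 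The ``if'' direction comes from absorbing the unit into the Galois factor. For the ``only if'' direction, note that $\idd$ is invariant both under the $(O_K/\idf)^\times$-action and under the action of $G_K$ on the second coordinate. This gives $DR_{L/K}=\coprod_{\idd\mid\idf}[\pi_\idd,1]\cdot Gal(L/K)$.

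\emph{Stabilizers.} Each orbit is isomorphic to $Gal(L/K)/\mathrm{Stab}(\idd)$. We have $[\pi_\idd,\tau]=[\pi_\idd,1]$ if and only if there is $u\in(O_K/\idf)^\times$ with $\pi_\idd u=\pi_\idd$ and $\tau=[u]^{-1}$. The condition $\pi_\idd u=\pi_\idd$ in $O_\idp/\idp^{f_\idp}$ says exactly that $u_\idp\equiv1 \bmod \idp^{f_\idp-d_\idp}$. Hence the stabilizer is the image under the Artin map of $\prod_\idp U_\idp^{(f_\idp-d_\idp)}$, taken modulo $\prod_\idp U_\idp^{(f_\idp)}$. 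The latter subgroup dies in $Gal(L/K)$ because $\idf$ is the conductor, which is the same well-definedness remark made for the action earlier.

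\emph{Identification with ramification groups.} This is the main step. We use local class field theory together with Hasse--Arf in upper numbering: the local Artin map sends $U_\idp^{(n)}$ onto $Gal^{n}(L_\idp/K_\idp)$, which sits inside the decomposition group $Gal(L_\idp/K_\idp)\subseteq Gal(L/K)$. This relies on the compatibility between the global Artin map restricted to $K_\idp^\times$ and the local one. Consequently the image of $\prod_\idp U_\idp^{(e_\idp)}$, with $e_\idp=v_\idp(\idf/\idd)=f_\idp-d_\idp$, is the subgroup generated by, i.e.\ the composite of, the $Gal^{e_\idp}(L_\idp/K_\idp)$. By Galois theory the quotient of $Gal(L/K)$ by this subgroup is $Gal(L^{(\idf/\idd)}/K)$, and it is a $G_K$-equivariant quotient. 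Summing over $\idd\mid\idf$ yields (\ref{decomposition of D_LK}).

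Two boundary cases need care. When $e_\idp=0$, the group $U_\idp^{(0)}=O_\idp^\times$ should map onto the inertia group $Gal^0$, and this matches the convention. When $\idp\nmid\idf$, the factor is trivial and the corresponding component contributes nothing, since $\idp$ is unramified.
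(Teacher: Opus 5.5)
Your proposal is correct and follows the paper's own argument. You decompose $DR_{L/K}$ by the divisor $\idd\mid\idf$ read off from the first coordinate, compute the isotropy group of $[\pi_\idd,1]$ as the image of $\prod_\idp U_\idp^{(e_\idp)}$, and identify that image with the composite of the $Gal^{e_\idp}(L_\idp/K_\idp)$ via the local class field theory fact that $U_\idp^{(n)}$ maps onto the upper-numbering group (Theorem 6.2 of \cite{Neukirch}); you spell out each step in more detail than the paper does. Two minor points do not affect the argument: the absorption step actually gives $[\rho,\sigma]=[\pi_\idd,[u]\sigma]$ rather than $[\pi_\idd,[u]^{-1}\sigma]$, and the result you need is that local class field theory theorem itself, of which Hasse--Arf is a consequence.
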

\begin{proof}
Throughout this proof (and the above statement), let $L_\idp$ denote the local field completing $L$ at any prime $\idP \in P_L$ over $\idp \in P_K$; since $L/K$ is now abelian, the subgroups $Gal^e(L_\idp/K_\idp)$ does not depend on the choice of $\idP \mid \idp$. To show the claim (\ref{decomposition of D_LK}), recall first that the ramificaction subgroup $Gal^e(L_\idp/K_\idp)$ corresponds to the group $U_\idp^{(e)} = 1 + \idp^e O_\idp \subseteq O_\idp^\times$ of the $e$-th local units (Theorem 6.2 \cite{Neukirch}); on the other hand, by definition, $DR_{L/K}$ has the following $G_K$-orbit decomposition:
\begin{eqnarray}
 DR_{L/K} &=& \coprod_{\idd \mid \idf} [\rho_\idd, 1] \cdot DR_{L/K}^\times,
\end{eqnarray}
where $\rho_\idd \in \widehat{O}_K$ is taken as $\rho_\idd \widehat{O}_K \cap K = \idd$. Note also that the isotropy group of $\rho_\idd \in O_K/\idf$ for the action of $\widehat{O}_K^\times$ is the group of units $\lambda = (\lambda_\idp) \in \widehat{O}_K^\times$ with $\lambda_\idp \equiv 1 \mod \idp^{v_\idp(\idf/\idd)}$, that is, $\lambda_\idp \in U_\idp^{(e_\idp)}$.  It follows from these facts that the Galois-orbit $[\rho_\idd, 1] \cdot DR_{L/K}^\times$ is isomorphic as a right $G_K$-set to the Galois group $Gal(L^{(\idf/\idd)}/K)$, hence the claim. 
\end{proof}

\begin{rem}[$\HH$-classes are $G_K$-orbits]
As in the case of $DR_K$, the $\HH$-classes of $DR_{L/K}$ are the $G_K$-orbits. Therefore, the above decomposition $DR_{L/K} = \coprod_{\idd \mid \idf} Gal(L^{(\idf/\idd)}/K)$ coincides with that by $\HH$-classes. Moreover, projecting the Frobenius element $\psi_\idp \in DR_K$ onto $DR_{L/K}$, we can consider the action of Frobenius elements $\psi_\idp$ on the $\HH$-classes of $DR_{L/K}$; however, while the action of $\psi_\idp$ in the case of $DR_K$ preserves the structure of $\HH$-classes $H$ as $G_K$-sets for $\idp \in S_H$, the action of $\psi_\idp$ on the $\HH$-classes of $DR_{L/K}$ almost always\footnote{More precisely, the degeneration can occur when the upper numbering of ramification subgroups is at the jump.} degenerates their structures as finite $G_K$-sets. In fact, the $\HH$-class $Gal(L^{(\idf/\idd)}/K) \subseteq DR_{L/K}$ degenerates to the smaller $\HH$-class $Gal(L^{(\idf/\idp\idd)}/K)$ when $\idp\idd \mid \idf$ by the action of $\psi_\idp$, which is in a sharp contrast to the corresponding fact for $DR_K$ that the $\HH$-class $H \subseteq DR_K$ maps \emph{isomorphically} as $G_K$-sets to the other $\HH$-class $\psi_\idp H$ when $\idp \in S_H$ by the action of $\psi_\idp$. In fact, the latter isomorphism $\psi_\idp: H \rightarrow \psi_\idp H$ of $\HH$-classes $H \subseteq DR_K$ should be understood more finely as the result of ``leveling'' the degenerations $\psi_\idp: Gal(L^{(\idf/\idd)}/K) \rightarrow Gal(L^{(\idf/\idp\idd)}/K)$ by taking the limit of larger $L/K$ (thus larger $\idf \in I_K$). 
\end{rem}

\begin{rem}[further re-interpretation of $DR_{L/K}$]
\label{nonabelian DR_f}
Here we give yet another description of $DR_{L/K}$ so that it makes sense for all finite \emph{non-abelian} Galois extensions $L/K$ too. As proved above, $DR_{L/K}$ could be described in terms of higher ramification subgroups of $Gal(L/K)$ as finite $G_K$-sets: That is,
\begin{eqnarray}
DR_{L/K} &=& \coprod_{\idd \mid \idf} Gal(L^{(\idf/\idd)}/K)
\end{eqnarray}
where $L^{(\idf/\idd)}$ denotes the fixed field of the composition of the ramification subgroups $Gal^{e_\idp}(L_\idP/K_\idp)$ with $e_\idp = v_\idp(\idf/\idd)$ choosing one $\idP$ over $\idp$; for a non-abelian Galois extension $L/K$, we take these ramification subgroups for all $\idP$ over $\idp$, so that their compositions are \emph{normal} subgroups of $Gal(L/K)$, whence $Gal(L^{(\idf/\idd)}/K)$ makes sense. 

Further we shall rewrite this description isomorphically as:
\begin{eqnarray}
\label{yet another}
DR_{L/K} &=& \coprod_{\idd \mid \idf} \{\idd\} \times Gal(L^{(\idf/\idd)}/K)
\end{eqnarray}
where the additional first component of $\{\idd\} \times Gal(L^{(\idf/\idd)}/K)$ is to remember the data $\idd \mid \idf$ and use it to define a monoid structure on the finite set of the right-hand side of (\ref{yet another}) as follows:
\begin{eqnarray}
 (\idd, \sigma) \cdot (\idd', \sigma') &:=& (\idd'', \sigma \cdot \sigma')
\end{eqnarray}
where $\idd'' := (\idf, \idd \idd')$ and the product $\sigma \cdot \sigma'$ of $\sigma \in Gal(L^{(\idf/\idd)}/K)$, $\sigma' \in Gal(L^{(\idf/\idd')}/K)$ is taken in the smaller group $Gal(L^{(\idf/\idd'')}/K)$, thus well-defined. It is not difficult to prove that this multiplication defines a monoid structure on the finite set of the right-hand side of (\ref{yet another}) isomorphic to $DR_{L/K}$ as finite monoids (i.e.\ more than finite $G_K$-sets). Also, since the conductor $\idf$ too can be defined in terms of higher ramification subgroups in $Gal(L/K)$ (as we discuss in more detail in \S \ref{s3.2}), this yet another description of $DR_{L/K}$ makes sense for finite non-abelian extensions $L/K$, whose structure we will study in \S \ref{s5}. 
\end{rem}

\section{Free deformation of $G_K$}
\label{s3}
Therefore, we can define a non-abelian extension of $DR_K$ by taking the inverse limit of the above  finite monoids for all (not necessarily abelian) finite extensions $L/K$ as we will do in \S \ref{s5}; but before that, we develop here one more construction of a non-abelian extension $DG_K$ of $DR_K$ by extending its \emph{adelic} description discussed in \S \ref{s2}. This $DG_K$ is characterized by the feature that its relationship to Witt vectors is canonical as we show in \S \ref{s4}, yet the above non-abelian extensions of $DR_\idf$'s are all realized as finite quotients of this $DG_K$. Further, we shall see that $DG_K$ has some useful canonical topological generators $\psi_v \in DG_K$ indexed by finite primes of the algebraic closure $\bar{K}$ of $K$, which we will call the \emph{Frobenius elements} of $DG_K$; the existence of such canonical elements is not evident from the above heuristic definition of the finite monoids in Remark \ref{nonabelian DR_f}. 

\subsection{Construction of $DG_K$}
\label{s3.1}
To sketch our construction of $DG_K$, first recall the following adelic description of $DR_K$ (cf.\ \S \ref{s2.2}):
\begin{eqnarray}
\label{adelic}
 DR_K &:=& \widehat{O}_K \times_{\widehat{O}_K^\times} G_K^{ab}. 
\end{eqnarray}
In this description, note that the first component $\widehat{O}_K$ in the right-hand side of (\ref{adelic}) is the product of the multiplicative monoids $O_\idp$ of integers of the local fields $K_\idp$; and that, as shown by Hoshi \cite{Hoshi}, the multiplicative monoid $O_\idp$ can be further described in terms of the subgroup $G_\idp \hookrightarrow G_K$, where $G_\idp := Gal(\bar{K}_\idp/K_\idp)$ denotes the absolute Galois group of the local field $K_\idp$: Let $O_\idp^* := O_\idp \setminus \{0\}$, and $I_\idp \subseteq G_\idp$ the inertia subgroup. The quotient group $G_\idp/I_\idp$ is an abelian group topologically generated by the Frobenius $\frob_\idp$; thus, there exists a surjection $q: G_\idp^{ab} \twoheadrightarrow G_\idp/I_\idp$. Noting the isomorphism $G_\idp^{ab} \simeq \hat{K}_\idp^\times$ of local class field theory, we then have a natural isomorphism of multiplicative monoids:
\begin{eqnarray}
\label{pullback}
 O_\idp^* &\simeq& G_\idp^{ab} \times_q \frob_\idp^\nat,
\end{eqnarray}
where the right-hand side is the pullback of the submonoid $\frob_\idp^\nat := \{\frob_\idp^n \mid n \in \nat\} \leq G_\idp/I_\idp$ along $q: G_K^{ab} \twoheadrightarrow G_\idp/I_\idp$. Hence the multiplicative monoid $O_\idp = O_\idp^* \cup \{0\}$ of local integers is isomorphic to the one-point compactification $\bigl( G_\idp^{ab} \times_q \frob_\idp^\nat \bigr) \cup \{0_\idp\}$ of the abstract monoid $G_\idp^{ab} \times_q \frob_\idp^\nat$, where $0_\idp$ behaves as the zero element. (For more details on this aspect of $DR_K$, the reader is referred to our previous work \cite{Uramoto25}.)

Our construction of $DG_K$ starts from noting that the right-hand side of (\ref{pullback}) admits a natural non-abelian extension: we take the pullback of $\frob_\idp^\nat \leq G_\idp/I_\idp$, not along the quotient $G_\idp^{ab} \twoheadrightarrow G_\idp/I_\idp$ from $G_\idp^{ab}$, but along $q': G_\idp \twoheadrightarrow G_\idp/I_\idp$ from the absolute Galois group $G_\idp$. By this construction, the resulting monoid $G_\idp \times_{q'} \frob_\idp^\nat$ is a submonoid of the \emph{Weil group} $W_\idp$ of the local field $K_\idp$ \cite{Tate}, which we denote as $W_\idp^+ = G_\idp \times_{q'} \frob_\idp^\nat$ and call the \emph{Weil monoid} of $K_\idp$. Finally, as the local factor $O_\idp$ of $\widehat{O}_K$ could be reconstructed as the one-point compactification of $G_\idp^{ab} \times_q \frob_\idp^\nat \simeq O_\idp^*$, we construct a multiplicative monoid $Q_\idp := W_\idp^+ \cup \{0_\idp\}$ as the one-point compactification of $G_\idp \times_{q'} \frob_\idp^\nat = W_\idp^+$ where $0_\idp$ behaves as the zero element. We use these (profinite) monoids $Q_\idp$ as our non-abelianization of the local factors $O_\idp$ in $\widehat{O}_K \times_{\widehat{O}_K^\times} G_K^{ab} = DR_K$. 

To be precise, in order to remember the information of embeddings $\bar{K} \hookrightarrow \bar{K}_\idp$ in the construction of $DG_K$, we actually distinguish the conjugacy classes of the submonoid $G_\idp \hookrightarrow G_K$; thus, our new (non-abelianized) local factors $Q_\idp$ are indexed, not by the primes $\idp \in P_K$ of $K$, but by the primes of the algebraic closure $\bar{K}$. To this end, let $\V_K$ be the set of (finite) primes of $\bar{K}$, which is isomorphic to the set of maximal closed subgroups of $G_K$ of MLF-type in the sense of Hoshi, cf.\ Lemma 3.4 \cite{Hoshi}\footnote{In the notation of \cite{Hoshi}, our $\V_K$ corresponds to $\V_{\tilde{F}}$ in \cite{Hoshi}; we omit this tilde just for simplicity. In particular, do not confuse our $\V_K$ with Hoshi's $\V_F$ that represents the set of \emph{conjugacy classes} of elements of $\V_{\tilde{F}}$.}, hence we shall often identify them. To each $v \in \V_K$, associated is the closed subgroup $G_v \leq G_K$ of MLF-type that corresponds to the embedding $\bar{K} \hookrightarrow \bar{K}_v$; also, we shall denote by $\idp_v \in P_K$ for each $v \in \V_K$ the restriction of $v$ to $K$. The inertia subgroup of $G_v$ is denoted by $I_v$, whence the quotient $G_v/I_v$ is topologically generated by the Frobenius $\frob_v$; using the quotient map $q_v: G_v \twoheadrightarrow G_v/I_v$, we pullback the submonoid $\frob_v^\nat \leq G_v/I_v$ to define the \emph{Weil monoid} $W_v^+ := G_v \times_{q_v} \frob_v^\nat$. With these notations, we introduce our target non-abelian local factor $Q_v$: 

\begin{defn}[non-abelian local factor $Q_v$]
For each $v \in V_K$, we define a multiplicative profinite monoid $Q_v$ by the following:
\begin{eqnarray}
 Q_v &:=& W_v^+ \cup \{0_v\}
\end{eqnarray}
with the topology of one-point compactification of $W^+_v$, where $0_v$ behaves as the zero element. 
\end{defn}

\begin{lem}[maximal abelianization of $Q_v$]
\label{abelianization of Q_v}
For each $v \in \V_K$, the maximal abelian quotient $Q_v^{ab}$ of $Q_v$ is canonically isomorphic to $O_{\idp_v}$:
\begin{eqnarray}
Q_v^{ab} &\simeq& O_{\idp_v}.
\end{eqnarray}
\end{lem}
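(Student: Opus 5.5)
The plan is to write down the abelianization map explicitly and then check that it is universal. Recall that $W_v^+ = G_v \times_{q_v} \frob_v^\nat$ is, as a set, the submonoid of $G_v$ of elements whose image in $G_v/I_v \simeq \widehat{\mathbb{Z}}$ is a non-negative integer. Its degree-$0$ part is $I_v$, and each degree-$n$ part is an $I_v$-coset $I_v \varphi^n$ for any fixed lift $\varphi$ of $\frob_v$.

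\emph{Construction of the map.} Compose the inclusion $W_v^+ \hookrightarrow G_v$ with $G_v \twoheadrightarrow G_v^{ab}$. Because the degree map factors through $G_v^{ab}$, this gives a monoid homomorphism
\begin{eqnarray}
 \alpha_v : W_v^+ &\longrightarrow& G_v^{ab} \times_q \frob_v^\nat \simeq O_{\idp_v}^* ,
\end{eqnarray}
where the last isomorphism is (\ref{pullback}), i.e.\ local class field theory. Then extend by $0_v \mapsto 0$.

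\emph{Properties of $\alpha_v$.} Several checks are needed.
\begin{enumerate}
\item Surjectivity holds degreewise. In degree $0$, $I_v \twoheadrightarrow I_v G_v'/G_v' \simeq O_{\idp_v}^\times$, where $G_v'$ is the closed commutator subgroup. In degree $n$, $I_v \varphi^n$ maps onto $\pi^n O_{\idp_v}^\times$ for a uniformizer $\pi$.
\item Continuity follows because both sides carry the one-point compactification topologies of spaces that are disjoint unions of compact degree pieces, and $\alpha_v$ preserves degree. Hence $\alpha_v$ sends neighbourhoods of $0_v$ (large degree) to neighbourhoods of $0$ (high $\idp$-adic valuation).
\item The target is commutative.
\end{enumerate}
So $\alpha_v$ factors as $Q_v \twoheadrightarrow Q_v^{ab} \to O_{\idp_v}$, and it remains to show this second arrow is injective.

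\emph{Universality.} Let $f: Q_v \to A$ be a continuous homomorphism to a commutative finite monoid. Suppose $x, y \in W_v^+$ satisfy $\alpha_v(x) = \alpha_v(y)$. Then $x$ and $y$ have the same degree, and $x = yc$ with $c \in G_v' \subseteq I_v$. Since $f(I_v) \subseteq A^\times$ is an abelian group, $f$ kills commutators of elements of $I_v$. The hard part is to show that $f$ also kills the twisted commutators $\varphi x \varphi^{-1} x^{-1}$ for $x \in I_v$; together with the commutators inside $I_v$, these topologically generate $G_v'$. Commutativity of $A$ applied to $\varphi x = (\varphi x \varphi^{-1}) \varphi$ only yields $u\, f(\varphi) = f(\varphi)$, where $u = f(\varphi x \varphi^{-1} x^{-1}) \in A^\times$.

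I expect this step to be the main obstacle. Since $\varphi^{n!} \to 0_v$ in $Q_v$, the element $f(\varphi)$ need not be cancellable. Indeed, the map $Q_v \to I_v^{ab} \cup \{0\}$ that sends all positive-degree elements and $0_v$ to $0$ appears to be a continuous commutative quotient that does not factor through $O_{\idp_v}$, because $I_v^{ab} \twoheadrightarrow O_{\idp_v}^\times$ generally has a nontrivial kernel.

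\emph{What I would try.} First, check whether the paper's notion of ``maximal abelian quotient'' of $Q_v$ is meant relative to the ambient Weil group, namely the quotient by the closed congruence $x \sim y \Leftrightarrow x y^{-1} \in \overline{[W_v, W_v]}$. Under that reading, the argument above closes immediately, since $\overline{[W_v, W_v]} = G_v'$ and $W_v^{ab} \simeq K_{\idp_v}^\times$. If instead the strict universal property is intended, I would attempt to force invertibility of $f(\varphi)$ modulo the stabiliser. If that fails, I would record that the correct statement is the relative one and prove the lemma in that form.
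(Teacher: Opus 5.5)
\textbf{Your obstacle is real. The gap is in the paper's proof, not in your argument.}

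With the standard meaning of maximal abelian quotient (quotient by the least closed congruence containing all pairs $(xy,yx)$), the lemma is false. Your map $f\colon Q_v\to I_v^{ab}\cup\{0\}$ is a genuine counterexample, not merely an apparent one:
\begin{enumerate}
\item It is continuous, because $I_v$ is compact and open in $W_v^+$, hence clopen in $Q_v$.
\item It never factors through $\alpha_v$. Pick $m$ prime to $p$ with $m\nmid q-1$, where $q=\#k_{\idp_v}$, and compose $f$ with $I_v^{ab}\twoheadrightarrow I_v^{\mathrm{tame}}\twoheadrightarrow\mathbb{Z}/m$. Since $\varphi$ acts on tame inertia by $\tau\mapsto\tau^{q^{\pm1}}$, the twisted commutator $c=\varphi u\varphi^{-1}u^{-1}$ has nonzero image $(q^{\pm1}-1)\bar u$ for suitable $u\in I_v$. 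On the other hand, $\alpha_v(c)=\alpha_v(1)$.
\end{enumerate}

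In fact your computations determine $Q_v^{ab}$ completely. For any commutative quotient $h$, your identity $h(c)\,h(\varphi)=h(\varphi)$ forces $h$ to factor through $\alpha_v$ on positive degrees, and $h$ factors through $I_v^{ab}$ on degree $0$. Hence $Q_v^{ab}$ is the image of $(f,\alpha_v)$ in $(I_v^{ab}\cup\{0\})\times O_{\idp_v}$. Concretely, it is $I_v^{ab}\sqcup\idp_vO_{\idp_v}$, with $I_v^{ab}$ acting on $\idp_vO_{\idp_v}$ through $I_v^{ab}\twoheadrightarrow O_{\idp_v}^\times$. Its unit group $I_v^{ab}$ surjects onto $\prod_{\ell\neq p}\mathbb{Z}_\ell(1)$. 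By contrast, the maximal prime-to-$p$ quotient of $O_{\idp_v}^\times$ is the finite group $k_{\idp_v}^\times$. So $Q_v^{ab}\not\simeq O_{\idp_v}$ even non-canonically, and your fallback of forcing invertibility of $f(\varphi)$ modulo the stabiliser cannot work. Drop it.

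The paper's proof is the single assertion $(W_v^+)^{ab}=O_{\idp_v}^*$, obtained ``by comparing'' with (\ref{pullback}). But (\ref{pullback}) only identifies $O_{\idp_v}^*$ with the image of $W_v^+$ in $G_v^{ab}$, that is, the quotient by the ambient commutator subgroup $G_v'$. That is exactly your relative reading. Killing $\varphi u\varphi^{-1}u^{-1}$ requires $\varphi$ to be invertible, which holds in $G_v$ but not in $Q_v$.

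So record the lemma in your relative form, which your argument proves: $O_{\idp_v}$ is the image of $Q_v$ in $W_v^{ab}\cup\{0\}\simeq K_{\idp_v}^\times\cup\{0\}$. Equivalently, it is the largest commutative quotient of $Q_v$ whose restriction to $I_v$ factors through $G_v^{ab}$.

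That version is all Theorem \ref{abelian quotient of DG_K} actually uses. In $DG_K$ one has $[u_v,1]=[1,u_v]$. So in any commutative quotient $D$, the image of $[\varphi u\varphi^{-1}u^{-1},1]$ is the image of a commutator of $G_K$, hence trivial. Therefore $Q_v\to DG_K\to D$ does factor through $O_{\idp_v}$, even though $Q_v\to Q_v^{ab}$ does not.
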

\begin{proof}
The proof is done just by comparing the definition of $W_v^+ = Q_v \setminus \{0_v\}$ with the presentation (\ref{pullback}) of $O^*_{\idp_v} = O_{\idp_v} \setminus \{0\}$; in particular, we have a canonical isomorphism $(W_v^+)^{ab} = O_{\idp_v}^*$. 
\end{proof}

\begin{rem}[unit group of $Q_v$]
For each $v \in \V_K$, the unit group $Q_v^\times$ of the profinite monoid $Q_v$ is the inertia subgroup $I_v$ of $G_v$. In particular, under the maximal abelian quotient $Q_v \twoheadrightarrow Q_v^{ab} = O_{\idp_v}$, the inertia subgroup $I_v = Q_v^\times$ is not only mapped onto the unit group $O_{\idp_v}^\times$, but also the inverse of the unit group $O_{\idp_v}^\times$ of $O_{\idp_v}$ by the quotient $Q_v \twoheadrightarrow O_{\idp_v}$ is exactly $I_v$. In this respect, we shall replace $O_{\idp_v}^\times$ with the inertia subgroup $I_v \leq G_v$. (See also (5) of Theorem 1.4 \cite{Hoshi}.)
\end{rem}

With these local factors $Q_v$, we now construct our non-abelian alternative of the multiplicative monoid $\widehat{O}_K = \prod_\idp O_\idp$. Naively one may replace $\widehat{O}_K$ with the product $\prod_v Q_v$, but the commutativity among the local factors $Q_v$ in $\prod_v Q_v$ induces an issue that contradicts to our goal (i.e.\ $DG_K^\times = G_K$) as mentioned below; thus, we shall replace $\widehat{O}_K = \prod_\idp O_\idp$, not with the product $\prod_v Q_v$, but with a certain \emph{free product} of $Q_v$'s that we review briefly. 

As in the case of free products of profinite groups (cf.\ \cite{Neukirch_cohomology}), we can define a free product of any family of profinite monoids and prove its existence/uniqueness up to isomorphism, although there are several possible variants of definitions. Since we need a certain specific form of a free product of the above profinite monoids $Q_v$ for our construction of $DG_K$, let us define this concept firstly in full generality as follows:

\begin{defn}[free product $\Asterisk_i D_i$]
Let $\{D_i\}_{i \in I}$ be a family of profinite monoids $D_i$. Then, a \emph{free product of $\{D_i\}_i$} is a profinite monoid $D$ equipped with (continuous) homomorphisms $\iota_i: D_i \rightarrow D$ such that for any family $\{\phi_i: D_i \rightarrow D'\}$ of homomorphisms, there exists a unique homomorphism $\phi: D \rightarrow D'$ such that $\phi_i = \phi \circ \iota_i$ for all $i \in I$. 
\end{defn}

\begin{rem}
The above definition of free product of profinite monoids does not coincide with that given in \cite{Neukirch_cohomology} in the case where $D_i$'s are all profinite groups; there, a certain finiteness condition (i.e.\ \emph{convergence}) on the family $\{\iota_i\}_i$ is required (cf.\ Definition 4.1.1 \cite{Neukirch_cohomology}). Formally, it makes sense also for families of profinite monoids; but we removed this finiteness condition since, with this condition, our construction of $DG_K$ results in a trivial monoid; technically, if $Q_v$ becomes trivial for all but fimitely many $v \in \V_K$, then $Q_v$ gets trivial for all $v \in \V_K$ in any finite quotient due to the fact that $Q_v$'s over some single $\idp \in P_K$ are all Galois conjugate, while such $v$'s constitute an infinite set. To avoid this issue, we will impose a slightly weaker condition on our construction of free product. 
\end{rem}

\begin{prop}[unique existence of free product]
For any family $\{D_i\}_{i \in I}$ of profinite monoids $D_i$, its free product exists uniquely up to canonical isomorphism; we denote it as $\Asterisk_i D_i$. 
\end{prop}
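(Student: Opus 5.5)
Uniqueness is formal: if $(D,\{\iota_i\})$ and $(D',\{\iota'_i\})$ both satisfy the universal property, the induced maps $D\to D'$ and $D'\to D$ compose to endomorphisms compatible with the $\iota_i$. By the uniqueness clause these composites are the identities. So the whole weight lies in existence, and I would construct $\Asterisk_i D_i$ as an inverse limit of finite monoids, mimicking the construction of profinite completions.

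\emph{Step 1 (finite data).} Consider pairs $(M,\{\phi_i\}_i)$, where $M$ is a finite monoid, $\phi_i:D_i\to M$ are continuous homomorphisms (with $M$ discrete), and $M$ is generated as a monoid by $\bigcup_i\phi_i(D_i)$. Up to isomorphism these form a \emph{set}, since $|M|$ is finite and the index data are bounded. Order them by $(M,\phi)\ge(M',\phi')$ if there is a homomorphism $M\to M'$ commuting with the $\phi_i$. Such a map is unique and surjective by the generation condition.

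\emph{Step 2 (directedness).} Given two such pairs $(M,\phi)$ and $(M',\phi')$, take the submonoid of $M\times M'$ generated by the images of $(\phi_i,\phi'_i)$. It is finite, the maps $(\phi_i,\phi'_i)$ are continuous, and it dominates both pairs. So the isomorphism classes form a codirected poset of finite monoids with surjective transition maps. Set $D:=\lim_{\leftarrow}M$, which is profinite. The $\phi_i$ assemble into continuous homomorphisms $\iota_i:D_i\to D$.

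\emph{Step 3 (universal property).} Let $\{\psi_i:D_i\to D'\}$ be given with $D'$ profinite, and write $D'=\lim_{\leftarrow}N$ over its finite continuous quotients $D'\to N$. For each $N$, let $N_0\le N$ be the submonoid generated by the images of the $D_i$. Then $(N_0,\{\pi_N\psi_i\})$ is one of the pairs of Step 1, giving a projection $D\to N_0\subseteq N$. These maps are compatible in $N$, by uniqueness of transition maps, so they induce $\phi:D\to D'$ with $\phi\iota_i=\psi_i$. For uniqueness of $\phi$, note that $D$ is topologically generated by $\bigcup_i\iota_i(D_i)$: its image in each finite quotient $M$ is generated by the $\phi_i(D_i)$, and a subset of an inverse limit that surjects onto every stage is dense. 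Two continuous homomorphisms agreeing on a dense generating set agree everywhere.

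\emph{Main obstacle.} The delicate point is set-theoretic size together with continuity when $I$ is infinite. Unlike the convergent free product of \cite{Neukirch_cohomology}, no condition forces almost all $\phi_i$ to be trivial. So I would be careful that the indexing family in Step 1 is genuinely a set, and that density is argued stagewise, without appeal to a finite generating set. Continuity of the $\iota_i$ follows because each component $\phi_i$ is continuous into a discrete finite monoid.
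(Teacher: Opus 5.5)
Your proof is correct and is essentially the paper's construction. Your pairs $(M,\{\phi_i\})$, with $M$ generated by the images, are exactly the finite quotients of the discrete free product $F$ whose restrictions to the $D_i$ are continuous, so your $\lim_{\leftarrow} M$ is the paper's completion of $F$. The universal property is then verified the same way, through the finite quotients of $D'$. Along the way you make explicit two details the paper's sketch leaves implicit: passing to the generated submonoid $N_0\le N$, and the density argument giving uniqueness of $\phi$.
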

\begin{proof}
The proof is essentially the same as that of Chapter IV, \cite{Neukirch_cohomology}. Just to make the construction explicit, we sketch the proof. The uniqueness is easy to see if it exists; we prove only its existence. To construct the target $\Asterisk_i D_i$, let $F$ be the free product of monoids $D_i$ as a discrete monoid; then, let $D$ be the completion of $F$ with the family of finite quotients $F \twoheadrightarrow M$ such that $D_i \rightarrow F \twoheadrightarrow M$ are all continuous; we see that $D$ is our target one. By definition, there exist (continuous) homomorphisms $\iota_i: D_i \rightarrow F \rightarrow D$; to see their universality, let $\phi_i: D_i \rightarrow D'$ be (continuous) homomorphisms. For each finite quotient $\pi_H: D' \twoheadrightarrow H$, we have the compositions $\pi_H \circ \phi_i: D_i \rightarrow H$, from which we get a unique homomorphism $\phi_H: D \rightarrow H$ such that $\pi_H \circ \phi_i = \phi_H \circ \iota_i$ by definition of $D$. Then these $\phi_H: D \rightarrow H$ indexed by finite quotients $D' \twoheadrightarrow H$ induce a homomorphism $\phi: D \rightarrow D'$ due to the profiniteness of $D'$, which proves the universality of $D$ and $\iota_i: D_i \rightarrow D$ as requested. 
\end{proof}

In order to define our non-abelian alternative of $\widehat{O}_K$, denote by $\V_S \subseteq \V_K$ for each finite subset $S \subseteq P_K$ the set of those $v \in \V_K$ with $\idp_v \in S$. For each $S \subseteq P_K$, we have the free product $\Asterisk_{v \in \V_S} Q_v$, which we denote by $Q_S$; if $S \subseteq S'$, there exists a canonical surjective homomorphism $q^{S'}_S: Q_{S'} \twoheadrightarrow Q_S$ so that $q^{S'}_S (\rho_v) = 1$ for all $\rho_v \in Q_v$ with $\idp_v \in S' \setminus S$. These $Q_S$ together with $q^{S'}_S$ constitute an inverse system of profinite monoids, with which we define our non-abelian variant $\widehat{Q}_K$ of $\widehat{O}_K$: 

\begin{defn}[$\widehat{Q}_K$]
We define a profinite monoid $\widehat{Q}_K$ as the inverse limit of $Q_S$ where $S \subseteq P_K$ ranges over all finite subsets of $P_K$: That is, 
\begin{eqnarray}
 \widehat{Q}_K &:=& \lim_{\leftarrow S \subseteq P_K} Q_S. 
\end{eqnarray}
\end{defn}

\begin{rem}[Galois action on $\widehat{Q}_K$]
Unlike the case of $\widehat{O}_K$, there exists a natural action of $G_K$ on the profinite monoid $\widehat{Q}_K$ induced from that on $\V_K$: First, the conjugate $G_K \ni \tau \mapsto \sigma \tau \sigma^{-1} \in G_K$ for each $\sigma \in G_K$ induces a (left) action of $G_K$ on the set $\V_K$ (of maximal closed subgroups of MLF-type), which we denote as $\V_K \ni v \mapsto \sigma v \in \V_K$; also, since the Weil monoid $W_v^+$ is a submonoid of $G_v \leq G_K$, the same action then induces the transformation $W_v^+ \ni \rho \mapsto {}^\sigma \rho := \sigma \rho \sigma^{-1} \in W_{\sigma v}^+$, which extends to $Q_v \rightarrow Q_{\sigma v}$ by sending $0_v$ to $0_{\sigma v}$. This transformation on $Q_v$'s then induces an action of $G_K$ on the profinite monoid $Q_S = \Asterisk_{v \in \V_S} Q_v$ by the universality of free product, compatibly for all finite $S \subseteq P_K$, so that the action induces that on $\widehat{Q}_K = \lim_S Q_S$ too.  
\end{rem}

We now proceed to construct $DG_K$ using the above $\widehat{Q}_K$ and $G_K$. Let us denote by $\widehat{Q}_K \ltimes G_K$ the semi-direct product of $\widehat{Q}_K$ and $G_K$ with respect to the above Galois action. The free-product profinite group $\widehat{I}_K := \Asterisk_{v \in \V_K} I_v$ acts on $\widehat{Q}_K$ from the right via the natural map $I_v \rightarrow Q_v \rightarrow \widehat{Q}_K$, and on $G_K$ from the left via the inverse of the inclusion $I_v \hookrightarrow G_K$, i.e.\ $I_v \ni u_v \mapsto u_v^{-1} \in G_K$. Then our target $DG_K$ is defined by taking the profinite quotient of $\widehat{Q}_K \ltimes G_K$ with respect to this action of the group $\widehat{I}_K$:

\begin{defn}[free deformation of $G_K$]
By the \emph{free deformation of $G_K$}, we mean the quotient of the semi-direct product $\widehat{Q}_K \ltimes G_K$ by the minimal profinite congruence including the relation $(\rho, \sigma) \sim (\rho u_v, u_v^{-1} \sigma)$ for each $\rho \in \widehat{Q}_K, u_v \in I_v$ and $\sigma \in G_K$; we shall denote the resulting profinite monoid abusively by:
\begin{eqnarray}
\label{DG_K}
 DG_K &:=& \widehat{Q}_K \ltimes_{ \widehat{I}_K} G_K 
\end{eqnarray}
\end{defn}

\begin{rem}[why not $\prod_v Q_v$]
\label{why not direct product}
As mentioned above too, it is intentional that we used the free products in $\widehat{Q}_K = \lim_S \Asterisk_{v \in V_S} Q_v$ rather than the direct product $\prod_v Q_v$: Although the construction $\bigl( \prod_v Q_v \bigr) \ltimes_{\widehat{I}_K} G_K$ itself makes similar sense, we can in fact show that the unit group of this profinite monoid descends to $G_K^{ab}$. Indeed, let us denote this profinite monoid as, say, $dG_K$; and suppose that $\phi: dG_K \twoheadrightarrow H$ be a finite quotient. Since the inertia subgroups $I_v \leq G_K$ generate (topologically) $G_K$, the images $\pi(1, u_v)$ of $[1, u_v] \in dG_K$ with $u_v \in I_v$ generate $\phi(G_K) \subseteq H$; moreover, as we later prove (using Chebotarev density theorem), we actually have $H^\times = \phi(G_K)$. On the other hand, by definition, we have $[1, u_v] = [u_v, 1]$ in $dG_K$, as well as $[u_v u_{v'}, 1] = [u_{v'} u_v, 1]$ for any $u_v \in I_v$ and $u_{v'} \in I_{v'}$ with $v \neq v'$ because of the commutativity of $Q_v$'s in $\prod_v Q_v$. From this, we can show that $H^\times = \phi(G_K)$ is abelian, and hence, so is $(dG_K)^\times$. (Actually, we can further prove that $dG_K^\times = G_K^{ab}$; but we do not need this fact in this paper.) Since this contradicts to our goal that the unit group $(DG_K)^\times$ of $DG_K$ should be $G_K$, the direct product $\prod_v Q_v$ cannot be our non-abelian alternative of $\widehat{O}_K$. 
\end{rem}

\begin{rem}[why not $\Asterisk_{v \in V_K} Q_v$]
\label{why not over V_K}
We could define a profinite monoid using $\Asterisk_{v \in \V_K} Q_v$ instead of $\widehat{Q}_K = \lim_S \Asterisk_{v  \in \V_S} Q_v$. However the latter has an advantage that the \emph{Frobenius elements} $\psi_v \in DG_K$ ($v \in \V_K$) defined below generate topologically $DG_K$ as we see in the next subsection. For the time of writing this paper, we do not know whether the former variant also has this aspect. 
\end{rem}

\subsection{Basic properties of $DG_K$}
\label{s3.2}
As seen above, our construction of $DG_K$ was motivated by the fact that the local factors $O_\idp$ (minus zero) of $\widehat{O}_K$ in the adelic description $DR_K = \widehat{O}_K \times_{\widehat{O}_K^\times} G_K^{ab}$ could be naturally extended to the Weil monoid $W_v^+$; in this subsection we guarantee that this replacement preserves some basic properties of $DR_K$, yet extending the unit group $DR_K^\times = G_K^{ab}$ of $DR_K$ to the absolute Galois group $G_K$. Here, we prove two basic facts that (1) $DG_K$ is a deformation of $G_K$ in our sense (\S \ref{s1}); and (2) the maximal abelian quotient of $DG_K$ is isomorphic to $DR_K$. 

We start with introducing useful topological generators of $DG_K$, which are natural non-abelian analogue of the topological generators $\psi_\idp$ of $DR_K$. Indeed under the quotient $DG_K \twoheadrightarrow DR_K$, these generators will be identical to $\psi_\idp$'s; and also, similarly to $\psi_\idp \in DR_K$, their invertibility characterizes (un)ramification of primes in non-abelian extensions. 

\begin{lem}[topological generators $\psi_v$ of $DG_K$]
\label{generator of DG_K}
There exists a homomorphism $\psi: \V_K^* \hookrightarrow DG_K$ from the discrete monoid $\V_K^*$ of finite words over $\V_K$ with dense image. 
\end{lem}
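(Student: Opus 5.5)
The plan is to define the $\psi_v$ canonically, as the non-abelian analogue of $\psi_\idp = [\pi_\idp, \pi_\idp^{-1}]$ in $DR_K$, and then prove density using Chebotarev's density theorem in every finite quotient.

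\textbf{Definition of $\psi_v$.} For $v \in \V_K$, choose any $\rho \in W_v^+$ with $q_v(\rho) = \frob_v$, and set
\begin{eqnarray}
 \psi_v &:=& [\rho, \rho^{-1}] \in DG_K,
\end{eqnarray}
where $\rho$ in the first component is viewed in $Q_v \to \widehat{Q}_K$, and $\rho^{-1} \in G_v \leq G_K$.

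\textbf{Independence of the choice.} Any other choice has the form $\rho' = \rho u$ with $u \in I_v$. Then
\begin{eqnarray}
 [\rho u, u^{-1}\rho^{-1}] &=& [\rho, \rho^{-1}]
\end{eqnarray}
by the defining relation $(\rho, \sigma) \sim (\rho u_v, u_v^{-1}\sigma)$ of $DG_K$, applied with $\sigma = \rho^{-1}$. So $\psi_v$ is canonical, and the universal property of the free discrete monoid $\V_K^*$ gives the homomorphism $\psi: \V_K^* \to DG_K$.

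\textbf{Reduction of density to three families.} Let $C$ be the closure of $\psi(\V_K^*)$; it is a closed submonoid. $DG_K$ is topologically generated by the classes $[1,\sigma]$ with $\sigma \in G_K$, and $[x,1]$ with $x \in Q_v$, $v \in \V_K$. This holds because the discrete free product of the $Q_v$ is dense in each $Q_S$, hence in $\widehat{Q}_K$. So it suffices to show that these elements lie in $C$.

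\begin{enumerate}
\item \emph{Galois elements.} This is the step I expect to be the main obstacle. Let $\phi: DG_K \twoheadrightarrow M$ be a finite quotient. The composite $\widehat{Q}_K \to DG_K \to M$ is continuous, so it factors through some $Q_S$ with $S \subseteq P_K$ finite. Hence every $Q_v$ with $\idp_v \notin S$ maps to $1$, and for such $v$ we get $\phi(\psi_v) = \phi([1, \rho^{-1}])$. Also $\phi|_{G_K}$ factors through $Gal(L/K)$ for some finite Galois $L/K$. By Chebotarev's density theorem, every element of $Gal(L/K)$ is the Frobenius, equivalently the inverse Frobenius, of $\rho$ for infinitely many $v$, in particular for some $v$ with $\idp_v \notin S$ and unramified in $L$. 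Hence $\phi([1,\sigma]) \in \phi(\psi(\V_K^*))$ for every $\sigma \in G_K$. As $\phi$ was arbitrary, $[1,\sigma] \in C$. The delicate points are the factorization through a finite $S$ and the sign and conjugation conventions of the semi-direct product.
\item \emph{Weil monoid elements.} For $\rho$ a Frobenius lift at $v$, compute
\begin{eqnarray}
 \psi_v \cdot [1, \rho] &=& [\rho \cdot {}^{\rho^{-1}}1, \rho^{-1}\rho] \;=\; [\rho, 1],
\end{eqnarray}
so $[\rho, 1] \in C$. An arbitrary element of $W_v^+$ has the form $u\rho^n$ with $u \in I_v$. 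Here $[u, 1] = [1, u] \in C$ by the defining relation and step 1, so every $[x,1]$ with $x \in W_v^+$ lies in $C$.
\item \emph{The zero element.} In the one-point compactification $Q_v$ we have $0_v = \lim_n \rho^{n!}$. So $[0_v, 1] \in C$ by closedness.
\end{enumerate}

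Together these give $C = DG_K$, which is the required density.
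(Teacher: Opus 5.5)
Your proof is correct and follows essentially the same route as the paper: the canonical $\psi_v=[\rho,\rho^{-1}]$, reduction to the generators $[1,\sigma]$ and $[x,1]$ via $[\rho,1]=\psi_v\cdot[1,\rho]$, and Chebotarev in each finite quotient to put $G_K$ inside the closure of $\psi(\V_K^*)$. The only real difference is how you get $\phi(\psi_v)=\phi([1,\rho^{-1}])$ for $\idp_v\notin S$. You read it off directly from the factorization of $\widehat{Q}_K\to M$ through a finite $Q_S$, whereas the paper deduces it from the invertibility of $\pi(\psi_v)$ using the idempotent $\psi_v^\omega=[0_v,1]$. Your version is slightly more direct, and it also makes explicit the treatment of $0_v$ and the density of the discrete free product, which the paper leaves implicit.
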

\begin{proof}
For each $v \in \V_K$, choose $\gamma_v \in W_v^+ \leq G_v$ such that its image under the projection $G_v \twoheadrightarrow G_v/I_v$ is equal to $\frob_v \in G_v/I_v$; then, define $\psi_v := [\gamma_v, \gamma_v^{-1}] \in \widehat{Q}_K \times_{\widehat{I}_K} G_K = DG_K$, which is independent of the choice of $\gamma_v$ due to the action of $\widehat{I}_K$. Then this map $v \mapsto \psi_v$ extends uniquely to a monoid homomorphism $\psi: \V_K^* \rightarrow DG_K$, which we show has a dense image. 

By definition, it is clear that $DG_K$ is generated (topologically) by $G_K$ and the elements $[\gamma_v, 1]$, where note also that each $\rho_v \in W_v^+$ is of the form $\rho_v = \gamma_v^n u_v$ with $u_v \in I_v$ and that $[u_v, 1] = [1, u_v]$; also, by Chebotarev density theorem, $G_K$ is generated by the lifts (or extensions) $\gamma_v \in G_K$ of $\frob_v$. Therefore, it follows that $DG_K$ is generated by the elements $[\gamma_v, 1]$ and $[1, \gamma_v^{-1}]$, hence equivalently, by $\psi_v = [\gamma_v, \gamma_v^{-1}]$ and $[1, \gamma_v]$. Thus, it suffices to prove that $G_K$ is generated by $\psi_v$'s. To see this, suppose that $\psi_v$ is invertible in a finite quotient $\pi: DG_K \rightarrow D$, whence we show that $\psi_v$ is equal to $[1, \gamma_v^{-1}]$ in $D$. (By Chebotarev density theorem, this will complete the proof.) By the invertibility of $\pi(\psi_v) \in D$, we know that $\pi(\psi_v)^\omega = \lim_n \pi(\psi_v)^{n!}$ converges to the identity $1 \in D$ (cf.\ \cite{Uramoto25}); on the other hand, we have:
\begin{eqnarray} 
 \psi_v^\omega &=& [0_v, 1]
\end{eqnarray}
because $Q_v = W_v^+ \cup \{0_v\}$ is the one-point compactification of $W_v^+$ with the zero element $0_v$, which is the unique non-trivial idempotent in $Q_v$; here we use also that $\psi_v^n = [\gamma_v^n, \gamma_v^{-n}]$. Combining these identities, we have:
\begin{eqnarray}
 \pi(0_v, 1) &=& \pi(\psi_v^\omega) \\
 &=& \pi (\psi_v)^\omega\\
 &=& \pi(1, 1). 
\end{eqnarray}
With this in mind, we deduce:
\begin{eqnarray}
 \pi(\psi_v) &=& \pi (1, 1) \cdot \pi(\gamma_v, \gamma_v^{-1}) \\
 &=& \pi (0_v, 1) \cdot  \pi(\gamma_v, \gamma_v^{-1}) \\
 &=& \pi (0_v, 1) \cdot \pi(1, \gamma_v^{-1}) \\
 &=& \pi (1, 1)  \cdot \pi(1, \gamma_v^{-1}) \\
 &=& \pi (1, \gamma_v^{-1}),
\end{eqnarray}
as requested. This proves that (1) the unit group $D^\times$ is generated by $G_K$; (2) every element of $G_K$ gets equal to some $\psi_v$ in $D^\times$ by Chebotarev density theorem, where we use the fact that $\psi_v$'s are automorphisms on $D$ for infinitely many $v \in \V_K$ (by the definition of $\widehat{Q}_K$); thus (3) $D$ is generated by the elements $\pi(\psi_v)$ for $v \in \V_K$. This completes the proof. 
\end{proof}

\begin{rem}[The injectivity of $\psi: \V_K^* \rightarrow DG_K$]
The above homomorphism $\psi: \V_K^* \rightarrow DG_K$ is actually injective, which can be proved by a similar (but a bit finer) construction as Theorem \ref{DG_K deforms G_K} below. But since this injectivity is not necessary, we do not prove this injectivity in this paper.
\end{rem}

We now prove our first goal that $(DG_K)^\times$ is isomorphic to $G_K$ by constructing sufficiently many finite $DG_K$-sets; to this end, it is helpful to have a characterization of finite $DG_K$-sets in terms of finite sets equipped with actions of the above topological generators $\psi_v$ and $G_K$. In other words:

\begin{lem}[characterization of finite $DG_K$-sets]
\label{DG_K as a fundamental monoid}
Let $\C_K$ be the category of finite sets $S$ equipped with continuous right actions of $\V_K^*$ and $G_K$ such that the following axioms are satisfied:
\begin{enumerate}
\item we have $\sigma \cdot \psi_v  = \psi_{\sigma(v)} \cdot \sigma$ on $S$ for all $v \in \V_K$ and $\sigma \in G_K$; 
\item we have $\psi_v = \gamma_v^{-1}$ on $S\cdot \psi_v^\omega$ for all $v \in \V_K$, where the inertia subgroup $I_v \leq G_K$ acts trivially; 
\item there is a finite set $T \subseteq P_K$ such that the action of $\psi_v$ for $v \notin \V_T$ is an automorphism of $S$. 
\end{enumerate}
Then the pair $\langle \C_K, \F_K \rangle$, with $\F_K: \C_K \rightarrow \fsets$ the natural forgetful functor, forms a semi-galois category, whose fundamental monoid $\pi_1(\C_K, \F_K)$ is isomorphic to $DG_K$. 
\end{lem}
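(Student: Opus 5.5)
The cleanest route goes through the general fact that for any profinite monoid $D$, the category $\cl D$ of finite sets with continuous right $D$-action, together with its forgetful functor, is a semi-galois category whose fundamental monoid is canonically $D$. Granting this from the semi-galois formalism (cf.\ \cite{Uramoto25}), it suffices to construct an isomorphism of categories $\cl DG_K \simeq \C_K$ over $\fsets$, i.e.\ one that commutes with the forgetful functors. Then $\langle \C_K, \F_K\rangle$ is automatically semi-galois and $\pi_1(\C_K,\F_K)\simeq \pi_1(\cl DG_K, \F) \simeq DG_K$.

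\emph{From $DG_K$-sets to $\C_K$.} Let $S$ be a finite $DG_K$-set. Restrict the action along $\psi:\V_K^*\to DG_K$ of Lemma \ref{generator of DG_K} and along $G_K \to DG_K$, $\sigma\mapsto[1,\sigma]$. Axiom (1) holds because of the semi-direct product relation $[1,\sigma][\gamma_v,\gamma_v^{-1}] = [{}^\sigma\gamma_v, \sigma\gamma_v^{-1}] = \psi_{\sigma v}[1,\sigma]$, using ${}^\sigma\gamma_v$ as the Frobenius lift at $\sigma v$. Axiom (2) is the computation in the proof of Lemma \ref{generator of DG_K}, applied on the image of $\psi_v^\omega=[0_v,1]$:
\begin{itemize}
\item since $0_v u_v = 0_v$ and $[\rho u_v,\sigma]=[\rho,u_v\sigma]$, the inertia group $I_v$ acts trivially on $S\cdot[0_v,1]$;
\item since $0_v\gamma_v=0_v$, we get $[0_v,1]\psi_v=[0_v,\gamma_v^{-1}]$.
\end{itemize}
Axiom (3) uses the inverse-limit definition of $\widehat{Q}_K$: the action map factors through a finite quotient, which by continuity factors through some $Q_T\ltimes_{\widehat I} G_K$. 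In that quotient $Q_v$ maps to $1$ for $v\notin\V_T$, so $\psi_v$ acts by $[1,\gamma_v^{-1}]$, which is invertible.

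\emph{From $\C_K$ to $DG_K$-sets.} Given $S\in\C_K$ with finite set $T$ as in axiom (3), I build a continuous monoid action of $\widehat{Q}_K\ltimes G_K$ on $S$ that respects the $\widehat I_K$-relation. For $v\in\V_T$, define an action of $Q_v$ on $S$ as follows:
\begin{itemize}
\item an element $\rho=\gamma_v^n u_v\in W_v^+$ acts by $u_v\cdot\psi_v^n\cdot\gamma_v^{n}$, which is arranged so that $[\rho,1]=[1,u_v]\,\psi_v^n\,[1,\gamma_v^n]$ formally;
\item the element $0_v$ acts by $\psi_v^\omega$.
\end{itemize}
The main checks are three.
\begin{itemize}
\item \emph{Independence of the choice of $\gamma_v$.} Different choices differ by inertia; the choice of $\gamma_v$ is used both in $\psi_v^n$ and in the factor $\gamma_v^n$, and axiom (1) applied to $\sigma\in I_v$ (which fixes $v$) together with axiom (2) handles the discrepancy.
\item \emph{Multiplicativity on $W_v^+$.} This reduces to axiom (1) with $\sigma\in G_v$.
\item \emph{Continuity at $0_v$, i.e.\ $\rho\to0_v$ as $n\to\infty$.} Since $S$ is finite, high powers $\psi_v^{n}$ with $n$ in the right congruence class equal $\psi_v^\omega\psi_v^{n}$. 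Axiom (2) then turns the action into $\psi_v^\omega\cdot\gamma_v^{-n}\cdot(\ldots)$, which cancels the $\gamma_v^n$ factor and gives the constant value $\psi_v^\omega$ up to inertia, and inertia acts trivially there.
\end{itemize}
I expect this continuity and zero-compatibility check to be the main obstacle; it is exactly where axiom (2) is needed.

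For $v\notin\V_T$, let $Q_v$ act trivially. This is consistent because the resulting action must agree with the image of the prescribed $\psi_v$: axiom (3) makes $\psi_v$ invertible, and then the argument of Lemma \ref{generator of DG_K} (with $\psi_v^\omega=1$) gives $\psi_v=\gamma_v^{-1}$. By the universal property of the free product, the actions assemble to an action of $Q_T$, hence of $\widehat Q_K$ via $\widehat Q_K\twoheadrightarrow Q_T$. Axiom (1) gives compatibility with the $G_K$-action, so we obtain an action of the semi-direct product. The relation $(\rho u_v,\sigma)\sim(\rho,u_v\sigma)$ holds by construction, since $u_v$ acts in both places through the given $G_K$-action. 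Hence the action factors through $DG_K$, and because $S$ is finite it factors through a finite quotient, so it is continuous.

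\emph{Conclusion.} The two constructions are mutually inverse. Starting from $\C_K$, we recover $\psi_v$ as the image of $[\gamma_v,\gamma_v^{-1}]$. Starting from a $DG_K$-set, the action is determined by its restrictions to $\psi_v$ and $G_K$, by the density in Lemma \ref{generator of DG_K}. Both constructions are functorial and commute with the forgetful functors, which gives the required isomorphism over $\fsets$ and proves the claim.
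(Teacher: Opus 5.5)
Your proposal follows the paper's proof. It sets up the same equivalence $\cl DG_K\simeq\C_K$ compatible with the forgetful functors, and verifies axioms (1)--(3) in the same way, via $\psi_v^\omega=[0_v,1]$ and the factorization through $Q_T\ltimes_{\widehat{I}_K}G_K$. It also rebuilds the $Q_v$-actions for $v\in\V_T$ in the same way, with $0_v$ acting by $\psi_v^\omega$. Your continuity check at $0_v$ is more explicit than the paper's one-line remark.

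One formula needs correcting. With your own semi-direct product convention, $[1,u_v]\,\psi_v^n\,[1,\gamma_v^n]=[{}^{u_v}\gamma_v^n,u_v]=[u_v\gamma_v^n,1]$, not $[\gamma_v^nu_v,1]$. The correct identity is $[\gamma_v^nu_v,1]=\psi_v^n\,[1,\gamma_v^nu_v]$. So $\rho=\gamma_v^nu_v$ should act by $s\mapsto s\cdot\psi_v^n\cdot\rho$, with $\rho$ acting through $G_K$; this is exactly the paper's definition.

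Since $\psi_v$ commutes with $G_v$, your rule amounts to $\psi_v^n$ followed by the conjugate $\gamma_v^{-n}\rho\gamma_v^n$. So in general it is neither multiplicative nor independent of $\gamma_v$. Axiom (2) cannot repair this: it only controls $S\cdot\psi_v^\omega$, and $S\cdot\psi_v^n$ need not lie there for small $n$.

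With the corrected rule, the checks become easy:
\begin{itemize}
\item Independence of $\gamma_v$ is immediate. Only $n$, the image of $\rho$ in $G_v/I_v$, enters, and $\psi_v$ is a given datum of $S$ rather than something depending on $\gamma_v$.
\item Multiplicativity needs only that $\psi_v$ commutes with $G_v$.
\item Your continuity argument goes through, provided you use that $\psi_v^n=\psi_v^\omega\psi_v^n$ for every $n$ beyond the index of $\psi_v$, not just for $n$ in a congruence class. Continuity at $0_v$ needs all large $n$.
\end{itemize}

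Finally, for $v\notin\V_T$ the $\widehat{I}_K$-relation forces $I_v$ to act trivially on $S$, because $Q_v$ is sent to $1$. State that this follows from axiom (2), since $\psi_v^\omega$ is the identity there, rather than saying that $u_v$ acts through $G_K$ ``in both places''.
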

\begin{proof}
It suffices to prove a categorical equivalence $\cl DG_K \simeq \C_K$ that preserves the underlying sets of each $(S, \rho) \in \cl DG_K$. The functor $\Phi: \cl DG_K \rightarrow \C_K$ is defined by sending each $(S, \rho) \in \cl DG_K$ to the underlying set $S$ equipped with the action of $\psi_v \in DG_K$ and $\sigma \in G_K \subseteq DG_K$; we first see that $\Phi(S, \rho)$ satisfies the above axioms. The first axiom is clear from the definition of $\psi_v$ and $DG_K$; concerning the third, note that $DG_K$ can be given as the inverse limit of $Q_T \ltimes_{\widehat{I}_K} G_K$ where $T$ ranges over all finite subsets of $P_K$; thus, the action of $DG_K$ on $S$ factors through $Q_T \ltimes_{\widehat{I}_K} G_K$ for some finite $T \subseteq P_K$. Therefore, for all $v \notin \V_T$, the action of the factor $[\gamma_v, 1] \in DG_K$ acts trivially on $S$; thus $\psi_v = [\gamma_v, \gamma_v^{-1}]$ is equal to the automorphism by $[1, \gamma_v^{-1}] \in G_K$ on $S$. To see the second, recall that we have:
\begin{eqnarray}
 \psi_v^\omega &=& [0_v, 1].
\end{eqnarray}
Thus we have:
\begin{eqnarray}
 \psi_v^\omega \cdot \psi_v &=& [0_v, 1] \cdot [\gamma_v, \gamma_v^{-1}] \\
 &=& [0_v, \gamma_v^{-1}] \\
 &=& [0_v, 1] \cdot [1, \gamma_v^{-1}] \\
 &=& \psi_v^\omega \cdot \gamma_v^{-1}. 
\end{eqnarray}
This implies the first half of the second axiom; to see the second half, recall that we have $[u_v, 1] = [1, u_v]$ in $DG_K$ for $u_v \in I_v$; thus, we can deduce the following identities similarly to the above:
\begin{eqnarray}
 \psi_v^\omega \cdot [1, u_v] &=& \psi_v^\omega \cdot [u_v, 1] \\
 &=& \psi_v^\omega.
\end{eqnarray}
This shows that $I_v \leq G_K$ acts trivially on $S \cdot \psi_v^\omega$, as requested. 

It is easy to see that $\Phi: \cl DG_K \rightarrow \C_K$ is fully faithful. To see that $\Phi$ is essentially surjective, we take any $S \in \C_K$ and construct a right action $\rho$ of $DG_K$ on $S$ so that $\Phi(S, \rho) = S$. To this end, let $T \subseteq P_K$ be as in the third axiom; it suffices to construct a right action of $Q_T \ltimes_{\widehat{I}_K} G_K$ on $S$. In order to define an action of $Q_T \ltimes_{\widehat{I}_K} G_K$ on $S$, we need to define actions of $Q_v$ for $v \in \V_T$ and $G_K$ on $S$ so that (1) $I_v \leq W_v^+$ and $I_v \leq G_K$ act equally, and (2) the actions of $\sigma \cdot \rho_v$ and ${}^\sigma\rho_v \cdot \sigma$ coincide on $S$ for any $\rho_v \in Q_v$ and $\sigma \in G_K$. To construct an action of $Q_v$ on $S$, we choose a lift $\gamma_v \in W_v^+$ of Frobenius for each $v \in \V_T$; and take the unique representation $\rho_v = \gamma_v^n u_v$ for each $\rho_v \in W_v^+$, where $u_v \in I_v$; also, we regard $0_v = \gamma_v^\omega$. Then, we define:
\begin{eqnarray}
 s \cdot \rho_v &:=& s \cdot (\psi_v \cdot \gamma_v)^n \cdot u_v,
\end{eqnarray}
where the action of $\psi_v$ is the one equipped on the original $S \in \C_K$, where $G_K$ is already acting on $S$ by definition; note here that we have $(\psi_v \cdot \gamma_v)^n = \psi_v^n \cdot \gamma_v^n$, and that the action $s \cdot 0_v$ makes sense to coincide with $s \cdot \psi_v^\omega$. In particular, it follows that we could also define as $s \cdot \rho_v := s \cdot \psi_v^n \cdot \rho_v$\footnote{Here we abusively denote the action of $\rho_v \in Q_v$ and that of $\rho_v \in W_v \subseteq G_K$ by the same symbol.}, which means that this definition does not depend on the choice of $\gamma_v$. By this definition, it is clear that $u_v \in I_v \leq W_v^+$ and $u_v \in I_v \leq G_K$ act equally on $S$. To see that $\sigma \cdot \rho_v = {}^\sigma \rho_v \cdot \sigma$, we use the first axiom $\sigma \cdot \psi_v = \psi_{\sigma(v)} \cdot \sigma$, that is:
\begin{eqnarray}
  \sigma \cdot \rho_v &=& \sigma \cdot \psi_v^n \cdot \rho_v \\
  &=& \psi_{\sigma(v)}^n \cdot \sigma \cdot \rho_v \\
  &=& \psi_{\sigma(v)}^n \cdot {}^\sigma \rho_v \cdot \sigma\\
  &=& {}^\sigma \rho_v \cdot \sigma. 
\end{eqnarray}
The continuity of the actions of $Q_v = W_v^+ \cup \{0_v\}$ follows from the continuity of $G_K$-action on $S$ and the second axiom for objects in $\C_K$. By definition, it is clear that we have $\Phi(S, \rho) = S \in \C_K$. This completes the proof. 
\end{proof}

\begin{thm}[$DG_K^\times = G_K$]
\label{DG_K deforms G_K}
The unit group of $DG_K$ is isomorphic to $G_K$. 
\end{thm}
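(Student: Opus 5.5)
We construct the isomorphism from the canonical map $\iota: G_K \rightarrow DG_K$, $\sigma \mapsto [1, \sigma]$, which lands in the unit group, and show that it is injective and surjective onto $DG_K^\times$. Both directions use Lemma \ref{DG_K as a fundamental monoid}: finite $DG_K$-sets are the objects of $\C_K$, i.e.\ finite sets with compatible actions of $\V_K^*$ and $G_K$ satisfying axioms (1)--(3).

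\emph{Injectivity.} It suffices to give, for each open normal subgroup $N \trianglelefteq G_K$, an object of $\C_K$ on which $G_K$ acts through $G_K/N$ faithfully. Let $L = \bar{K}^N$ and $T$ be the finite set of primes of $K$ ramified in $L$. We will use a finite analogue of Remark \ref{nonabelian DR_f}. Let $X$ be the disjoint union, over tuples $\idd = (d_\idp)_{\idp \in T}$ with $0 \le d_\idp \le M$ for large $M$, of the quotients $Gal(L_{\idd}/K)$. Here $L_\idd$ is the fixed field of the normal subgroup generated by the inertia groups $I_v N/N$ for all $v$ above those $\idp$ with $d_\idp = M$ (a truncated ``$\idp$-part'').

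The actions are defined as follows. $G_K$ acts on each piece by right multiplication. For $v \notin \V_T$, $\psi_v$ acts as $\gamma_v^{-1}$, which is well defined because $v$ is unramified in $L$. For $v \in \V_T$, $\psi_v$ raises $d_{\idp_v}$ by one (capped at $M$) and then multiplies by $\gamma_v^{-1}$ in the quotient group. On the pieces with $d_{\idp_v} = M$, $I_v$ acts trivially, so $\gamma_v^{-1}$ is well defined there, and $\psi_v^\omega$ lands there.

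We then check the axioms. Axiom (3) holds by construction. Axiom (2) holds because $S\cdot\psi_v^\omega$ is contained in the pieces with $d_{\idp_v}=M$. Axiom (1) is the equivariance $\sigma\gamma_v^{-1}\sigma^{-1} \equiv \gamma_{\sigma v}^{-1}$ modulo the appropriate inertia, and it holds in those quotients. The piece $\idd = 0$ is $Gal(L/K)$ with its faithful action, so $\ker\iota \subseteq N$ for every $N$, and hence $\iota$ is injective.

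\emph{Surjectivity.} Let $x \in DG_K^\times$. It suffices to show that in every finite quotient $\pi: DG_K \twoheadrightarrow D$, the unit $\pi(x)$ lies in $\pi(G_K)$; compactness then gives $x \in \iota(G_K)$. We will show $D^\times = \pi(G_K)$. By Lemma \ref{generator of DG_K}, $D$ is generated by the elements $\pi(\psi_v)$. Moreover, by the argument in that lemma, each invertible $\pi(\psi_v)$ equals $\pi(1,\gamma_v^{-1}) \in \pi(G_K)$.

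Write $\pi(x) = \pi(\psi_{v_1})\cdots\pi(\psi_{v_k}) g$, using that $\pi(G_K)$ normalizes the set of generators by axiom (1). Since $D$ is finite, a product being a unit forces each factor to be a unit: in a finite monoid, $ab \in D^\times$ implies $a$ is right invertible, hence invertible because $D$ is finite. Thus every $\pi(\psi_{v_i})$ is a unit, and therefore $\pi(x) \in \pi(G_K)$.

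\emph{Main obstacle.} The main difficulty is the construction in the injectivity step, namely verifying axiom (1) together with the capping rule for $v \in \V_T$. We need $\psi_v$ on the non-top levels to be well defined independently of the lift $\gamma_v$, and this may fail. If it does, the fallback is to take $M = 1$, using only two levels per $\idp$: the full group $Gal(L/K)$ at level $0$ and the inertia quotient at level $1$. At level $0$, $\psi_v$ for $v \in \V_T$ maps into level $1$ via $\gamma_v^{-1}$, which is well defined modulo inertia. The faithful level-$0$ piece is all that is needed for injectivity.
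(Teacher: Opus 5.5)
Your argument is correct, provided you commit to the two-level version ($M=1$). The general-$M$ rule does not merely risk failing; it fails. On a level with $d_{\idp_v}+1<M$, nothing at $\idp_v$ is killed. There, axiom (1) for $\sigma\in G_v$ (so $\sigma v=v$) reads $\tau\sigma\gamma_v^{-1}=\tau\gamma_v^{-1}\sigma$, i.e.\ $\gamma_v \bmod N$ must centralize the decomposition group. For $L=\mathbb{Q}(\zeta_3,\sqrt[3]{2})$ at $2$, the decomposition group is $S_3$ and the Frobenius lifts are the transpositions, none of which is central.

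The two-level object does satisfy axioms (1)--(3). Its pieces are $Gal(L_U/K)$ for $U\subseteq T$, with $L_U$ the largest subextension of $L$ unramified at the primes of $U$. For $\idp_v\in T$, $\psi_v$ sends $(U,\tau)$ to $(U\cup\{\idp_v\},\bar\tau\bar\gamma_v^{-1})$. Every piece into which a Frobenius multiplication lands kills $I_{\sigma v}$ for all $\sigma$, so $\bar\gamma_{\sigma v}=\overline{\sigma\gamma_v\sigma^{-1}}$ there. You should present this as the construction rather than as a fallback.

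With that fix, your proof has the same architecture as the paper's. $G_K\to DG_K^\times$ is onto by the generator lemma. It is injective because Lemma \ref{DG_K as a fundamental monoid} lets one build finite $DG_K$-sets on which $G_K$ acts faithfully through $Gal(L/K)$.

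The differences are in the details:
\begin{itemize}
\item For surjectivity, you supply the finite-monoid fact that each factor of a unit is a unit. The paper leaves this implicit in item (1) at the end of the proof of Lemma \ref{generator of DG_K}.
\item For injectivity, the paper uses $D_{L/K}=\coprod_{\idd\mid\idf}\{\idd\}\times Gal(L^{(\idf/\idd)}/K)$. It is indexed by divisors of the ramification conductor $\idf_{L/K}$ and cut out by upper-numbering ramification groups.
\item The paper avoids your obstruction by letting $\psi_v$ act below the top level as the bare projection $(\idd,\sigma)\mapsto(\idd\idp_v,\sigma)$, applying $\frob_v^{-1}$ only at the top. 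That is what permits arbitrarily many levels.
\end{itemize}
Your object is the ``squarefree'' shadow of $D_{L/K}$, using only $Gal^0$ (inertia), and it is all the theorem needs. The paper's finer object is chosen because it recovers $DR_{L/K}$ in the abelian case and is reused in \S\ref{s5} to build $\D G_K$.
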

\begin{proof}
Since we have already proved that $DG_K^\times$ is a quotient of $G_K$ (see (1) in the last part of the proof of Lemma \ref{generator of DG_K}), it suffices to construct a finite $DG_K$-set $D_{L/K}$ for each finite Galois extension $L/K$ such that $G_K$ acts on $D_{L/K}$ precisely through $G_K \twoheadrightarrow Gal(L/K)$. For this aim, we shall apply our re-interpretation of $DR_\idf$'s discussed in Remark \ref{nonabelian DR_f}.

Let $L/K$ be a finite Galois extension; we denote by $\idf:=\idf_{L/K} \in I_K$ the non-zero integral ideal of $K$ defined in terms of higher ramification subgroups of $Gal(L/K)$ as follows: taking $\idP \in P_L$ over $\idp$, 
\begin{eqnarray}
 v_\idp(\idf) &:=& \min \{e \in \nat \mid Gal^e(L_\idP/K_\idp) = 1\};
\end{eqnarray}
since $L/K$ is now Galois, this $\idf$ does not depend on the choice of $\idP \mid \idp$. For each divisor $\idd \mid \idf$, let us denote by $L^{(\idf/\idd)}$ the field fixed by the subgroup of $Gal(L/K)$ generated by the higher ramification subgroups $Gal^{e_\idp}(L_\idP/K_\idp)$, where $e_\idp := v_\idp(\idf/\idd)$ and $\idP \in P_L$ over $\idp \in P_K$; we remark that $L^{(\idf/\idd)}$ is Galois over $K$ since we let $\idP$ run over all $\idP$ above $\idp$. Also, let us write as $G^{(\idf/\idd)} := Gal(L^{(\idf/\idd)}/K)$. With these notations, we shall define the underlying set of $D_{L/K}$ as follows:
\begin{eqnarray}
 D_{L/K} &:=& \coprod_{\idd \mid \idf} \{\idd \} \times G^{(\idf/\idd)};
\end{eqnarray}
In this proof, we need this $D_{L/K}$ just as a finite ($DG_K$-) set. (But for our latter use (\S \ref{s5}), we shall also equip it with the monoid structure defined by:
\begin{eqnarray}
 (\idd, \sigma) \cdot (\idd', \sigma') &:=& (\idd'', \sigma \cdot \sigma'),
\end{eqnarray}
where $\idd'' := (\idf, \idd \idd')$; and for $\sigma \in G^{(\idf/\idd)}$ and  $\sigma' \in G^{(\idf/\idd')}$, we take their product in $G^{(\idf/\idd'')}$, which is well-defined. It is clear that $D_{L/K}^\times = Gal(L/K)$; we will later prove that there exists a continuous surjective homomorphism $\pi: DG_K \twoheadrightarrow D_{L/K}$. In other words, the $DG_K$-set $D_{L/K}$ that we are now constructing is actually a galois object of $\cl DG_K$.)

In order to define a continuous action of $DG_K$ on this finite set $D_{L/K}$, it suffices to construct a continuous right action of $G_K$ and those of $\psi_v$ satisfying the axioms of Lemma \ref{DG_K as a fundamental monoid}. The right $G_K$ action on $D_{L/K}$ is the natural one, which factors precisely through $G_K \twoheadrightarrow Gal(L/K)$; to define the action of $\psi_v$ for $v \in \V_K$, we set as follows: for each $(\idd, \sigma) \in D_{L/K}$, 
\begin{eqnarray}
 (\idd, \sigma) \cdot \psi_v &:=& \left\{ \begin{array}{ll} (\idd \idp_v, \sigma) & \textrm{if $\idd \idp_v \mid \idf$} \\ (\idd, \sigma \cdot \frob_v^{-1}) & \textrm{if $\idd \idp_v \not \mid \idf$.} \end{array}\right.
\end{eqnarray}
Notice that, if $\idd \idp_v \not \mid \idf$, the prime $\idp_v$ is unramified in $L^{(\idf/\idd)}$, so that the inertia subgroup $I_v \leq G_v$ acts trivially on $L^{(\idf/\idd)}$, whence $\frob_v \in Gal(L^{(\idf/\idd)}/K)$ makes sense. We need to prove that this action of $\psi_v$'s satisfies the axioms of Lemma \ref{DG_K as a fundamental monoid}. But this is easy to prove: The first axiom $\tau \cdot \psi_v = \psi_{\tau(v)} \cdot \tau$ follows from the above definition (when $\idd \idp_v\mid \idf$) and the following general identity (when $\idd \idp_v \not \mid \idf$):
\begin{eqnarray} 
 \tau \cdot \frob_v^{-1} &=& \frob_{\tau(v)}^{-1} \cdot \tau.
\end{eqnarray}
The second axiom also follows from the above definition and the fact that $v_{\idp_v}(\idf)$ is finite; finally, the third axiom follows from the fact that $\idf$ contains only finitely many prime factors. This completes the proof. 
\end{proof}

\begin{thm}[$DR_K = (DG_K)^{ab}$]
\label{abelian quotient of DG_K}
We have a natural isomorphism of profinite monoids:
\begin{eqnarray}
 (DG_K)^{ab} &\simeq& DR_K,
\end{eqnarray}
where $(DG_K)^{ab}$ denotes the maximal abelian quotient of $DG_K$ in such a manner that the following diagram commutes:
\begin{equation}
\label{compatibility}
\xymatrix{
 \V_K^* \ar[r]^\psi \ar@{->>}[d] & DG_K \ar@{->>}[d] \\
 I_K \ar[r] & DR_K
}
\end{equation}
\end{thm}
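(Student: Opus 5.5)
The plan is to construct homomorphisms in both directions between $(DG_K)^{ab}$ and $DR_K$ and check that they are mutually inverse on topological generators. The map $(DG_K)^{ab} \rightarrow DR_K$ is the substantial direction. I build it from the universal property of $DG_K = \widehat{Q}_K \ltimes_{\widehat{I}_K} G_K$, using the adelic description $DR_K \simeq \widehat{O}_K \times_{\widehat{O}_K^\times} G_K^{ab}$.

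\textbf{Step 1: local maps.} For each $v \in \V_K$, Lemma \ref{abelianization of Q_v} gives $Q_v \twoheadrightarrow Q_v^{ab} \simeq O_{\idp_v}$. Composing with the inclusion of $O_{\idp_v}$ as the $\idp_v$-component of $\widehat{O}_K$ (all other components equal to $1$) gives homomorphisms $Q_v \rightarrow \widehat{O}_K \rightarrow DR_K$, where the second map is $\rho \mapsto [\rho, 1]$.

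\textbf{Step 2: descent to $\widehat{Q}_K$.} By the universal property of the free product, these induce $Q_S \rightarrow DR_K$ for each finite $S$. I want them compatible with the transition maps $q^{S'}_S$ and convergent to a continuous map on $\widehat{Q}_K$. The point is that a finite quotient of $DR_K$ factors through some $DR_\idf$. In $DR_\idf$, the image of $[\rho_v,1]$ for $\idp_v \nmid \idf$ is determined only through the unit-group part, so factors with $\idp_v \notin S$ contribute only via $G_K^{ab}$. I therefore expect this step to be the main obstacle: the map $\widehat{Q}_K \rightarrow \widehat{O}_K$ is clean only after combining with the $G_K^{ab}$-component. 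Concretely, I will define the map on the semidirect product $\widehat{Q}_K \ltimes G_K$ instead. I send $(\rho, \sigma)$ to $[\bar\rho, \bar\sigma]$, where $\bar\sigma$ is the image in $G_K^{ab}$ and $\bar\rho$ is the image of $\rho$ in the commutative monoid $\widehat{O}_K = \lim_S \prod_{\idp \in S} O_\idp$. This is well-defined on each $Q_S$, and the limit over $S$ is compatible because $q^{S'}_S$ kills exactly the factors mapping to $O_\idp$ for $\idp \in S' \setminus S$.

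\textbf{Step 3: the relations.} The Galois action on $\widehat{Q}_K$ becomes trivial after abelianization, since conjugation $Q_v \rightarrow Q_{\sigma v}$ induces the identity on $O_{\idp_v}$. So the semidirect product maps to a homomorphism. The relation $(\rho u_v, u_v^{-1}\sigma) \sim (\rho, \sigma)$ is respected because $u_v \in I_v$ maps to the same element of $O_{\idp_v}^\times \simeq$ inertia in $G_K^{ab}$ under local class field theory, compatibly with Artin reciprocity. Together these give $\Phi: (DG_K)^{ab} \rightarrow DR_K$. Computing on generators, $\psi_v = [\gamma_v, \gamma_v^{-1}] \mapsto [\pi_{\idp_v}, \pi_{\idp_v}^{-1}] = \psi_{\idp_v}$. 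This yields the commutativity of (\ref{compatibility}). Surjectivity of $\Phi$ follows because $I_K$ is dense in $DR_K$.

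\textbf{Step 4: injectivity.} By Lemma \ref{generator of DG_K}, $(DG_K)^{ab}$ is topologically generated by the images of $\psi_v$. In the abelianization, $\psi_v$ depends only on $\idp_v$, since $\psi_{\sigma v} = \sigma\psi_v\sigma^{-1}$ becomes $\psi_v$. Hence $I_K \rightarrow (DG_K)^{ab}$ has dense image. It remains to show that every finite abelian quotient $\pi: DG_K \twoheadrightarrow A$ factors through some $DR_\idf$, i.e.\ that the congruence $\sim_\idf$ is satisfied for suitable $\idf$.

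\textbf{Step 5: the key reduction.} Choose $T$ as in Lemma \ref{DG_K as a fundamental monoid}(3). Let $\idf$ be supported on $T$ with exponents large enough that:
\begin{itemize}
\item $\pi$ restricted to $G_K$ factors through $Gal(K_\idf/K)$; this uses the fact that $\pi(G_K)$ is an abelian quotient of $G_K$ in which the inertia at $\idp \notin T$ dies;
\item $\pi(\psi_\idp^{n}) = \pi(\psi_\idp^{\omega})\pi(\psi_\idp^{n-v_\idp(\idf)})$-type stabilization holds at the primes of $T$.
\end{itemize}
Then I check $\ida \sim_\idf \idb \Rightarrow \pi(\psi_\ida)=\pi(\psi_\idb)$ using the characterization $(\ida,\idf)=(\idb,\idf)=\idd$ with equality in $C_{\idf/\idd}$. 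This uses axiom (2) of Lemma \ref{DG_K as a fundamental monoid}: on the image of $\psi_\idp^{\omega}$, the inertia $I_v$ acts trivially and $\psi_v$ acts as $\gamma_v^{-1}$. As a result, the unit part only matters modulo the conductor $\idf/\idd$. This produces a map $DR_\idf \rightarrow A$ inverse to $\Phi$ on this level, completing the isomorphism.
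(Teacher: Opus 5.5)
Your Steps 1--3 are fine; mapping $Q_S\to\prod_{\idp\in S}O_\idp$ before passing to $DR_K$ is in fact a cleaner way to get compatibility with $q^{S'}_S$ than the paper's direct maps $Q_S\to DR_K$. The reduction in Step 4 is also legitimate, given density of $\psi(\V_K^*)$.

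\textbf{The gap is in Step 5.} Write $\ida=\idd\ida'$ and $\idb=\idd\idb'$ with $\idd=(\ida,\idf)$. The congruence $\sim_\idf$ controls $\ida',\idb'$ only modulo $\idf/\idd$. So for each $\idp\mid\idf$ you need $\pi(\psi_\idd)$ to be fixed by every inertia element whose local Artin image lies in $U_\idp^{(v_\idp(\idf/\idd))}$.

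Your inputs only cover the two extremes. Condition (a) handles $v_\idp(\idd)=0$, since it kills $U_\idp^{(v_\idp(\idf))}$. Axiom (2) with (b) handles $v_\idp(\idd)$ in the stable range, where all of $I_v$ acts trivially on $\pi(\psi_v^\omega)A$. When $0<v_\idp(\idd)$ lies below the stable range, nothing you cite gives invariance under the strictly larger group $U_\idp^{(v_\idp(\idf)-v_\idp(\idd))}$. With exponents chosen merely so that (a) and (b) hold, the claim is false.

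\emph{Counterexample.} Take $K=\mathbb{Q}$. Let $\phi\colon\mathbb{Z}_p\to A_p$ be the multiplicative quotient that records $(v_p(x),\,xp^{-v_p(x)}\bmod U_p^{(3)})$ when $v_p(x)\le 1$ and collapses $p^2\mathbb{Z}_p$ to a zero. Let $A$ be the image of $DR_\mathbb{Q}$ under $[\rho,s]\mapsto[\phi(\rho_p),\bar s]\in A_p\times_{\mathbb{Z}_p^\times}(\mathbb{Z}/p^3)^\times$; composed with your surjection $\Phi$, this is a finite abelian quotient of $DG_\mathbb{Q}$.
\begin{itemize}
\item $A^\times\simeq C_{p^3}$, so (a) holds for $\idf=p^3$.
\item $\pi(\psi_p)^k\in\pi(\psi_p^\omega)A$ for $k\ge 2$, so (b) holds.
\item Take a positive integer $\alpha\equiv 1\bmod p^2$ with $\alpha\not\equiv 1\bmod p^3$. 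Then $(p)\sim_{p^3}(p\alpha)$.
\item The images of $(p)$ and $(p\alpha)$ are $[\phi(p),s]$ and $[\phi(p\alpha),s]$ for the same $s\in C_{p^3}$. They differ, because the $\mathbb{Z}_p^\times$-balancing only identifies along units in $U_p^{(3)}$.
\end{itemize}
Here one needs $\idf=p^4$. Separately, (b) as literally written forces $\pi(\psi_\idp^{v_\idp(\idf)})$ to be idempotent, which fails in general. What you want is $\pi(\psi_\idp)^k\in\pi(\psi_\idp^\omega)A$ for $k\ge v_\idp(\idf)$.

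\textbf{Repair.} Take $v_\idp(\idf)\ge c_\idp+M_\idp$. Here $U_\idp^{(c_\idp)}$ dies in $\pi(G_K)$, and $M_\idp$ is the stabilization index of $\pi(\psi_v)$.
\begin{itemize}
\item For $v_\idp(\idd)<M_\idp$, the group $U_\idp^{(v_\idp(\idf/\idd))}\subseteq U_\idp^{(c_\idp)}$ already acts trivially.
\item For $v_\idp(\idd)\ge M_\idp$, axiom (2) applies.
\end{itemize}
Equivalently, $\pi|_{Q_v}$ factors through $Q_v^{ab}=O_\idp$ and, by continuity, through $O_\idp/\idp^N$. This gives $\pi(\psi_v^k)\pi(u)=\pi(\psi_v^k)$ directly for $u\in I_v$ mapping into $U_\idp^{(N-k)}$.

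\textbf{Comparison with the paper.} The paper never has to reconstruct this information. It shows that a finite abelian quotient $D$ factors through $\widehat{O}_K\times_{\widehat{O}_K^\times}G_K^{ab}$ directly:
\begin{itemize}
\item each $Q_v\to D$ factors through $O_{\idp_v}$, identically for conjugate $v$;
\item $G_K\to D$ factors through $G_K^{ab}$;
\item the relation $[u_v,1]=[1,u_v]$ is exactly the $\widehat{O}_K^\times$-balancing.
\end{itemize}
Keeping the whole monoid $O_\idp$ carries the higher-unit filtration along automatically. Your route passes to the generators $\psi_v$ and the ideal-theoretic congruence $\sim_\idf$, which discards that filtration and forces you to recover it by hand; that is exactly where your sketch breaks.
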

\begin{proof}
This is almost trivial from the definition of $DG_K$ and Lemma \ref{abelianization of Q_v}; thus, we just sketch some relevant constructions. In particular, we define a surjective continuous homomorphism $\pi: DG_K \twoheadrightarrow DR_K$; then, see that every finite quotient $DG_K \twoheadrightarrow D$ onto a commutative finite monoid $D$ factors through this map $\pi: DG_K \twoheadrightarrow DR_K$. 

To this end, let $\phi_v: Q_v \rightarrow DR_K$ for $v \in V_K$ be defined by the composition $Q_v \twoheadrightarrow O_{\idp_v} \hookrightarrow DR_K$, where the first surjection $Q_v \twoheadrightarrow O_{\idp_v}$ is the maximal abelianization (cf.\ Lemma \ref{abelianization of Q_v}); by definition of $Q_S = \Asterisk_{v \in \V_S} Q_v$, the family $\{\phi_v: Q_v \rightarrow DR_K\}_{v \in \V_S}$ induces a homomorphism $\phi_S: Q_S \rightarrow DR_K$ in a way compatible for each finite $S \subseteq P_K$; thus, with these $\phi_S$, we obtain $\phi: \widehat{Q}_K \rightarrow DR_K$. (Here note that, for every finite quotient $DR_K \twoheadrightarrow D$, all but finitely many $\idp \in P_K \subseteq DR_K$ are invertible in $D$, thus, the uniformizer $\pi_\idp \in O_\idp$ gets trivial in $D$ under this quotient $DR_K \twoheadrightarrow D$.) By the projection $G_K \twoheadrightarrow G_K^{ab}$, moreover, we have $G_K \twoheadrightarrow G_K^{ab} = DR_K^\times \hookrightarrow DR_K$. Combining these maps $\widehat{Q}_K \rightarrow DR_K$ and $G_K \rightarrow DR_K$, we obtain a well-defined homomorphism $DG_K = \widehat{Q}_K\ltimes_{\widehat{I}_K} G_K \rightarrow DR_K$, putting $[\rho, \sigma] \mapsto [\phi(\rho), \bar{\sigma}]$, where $\bar{\sigma} \in G_K^{ab}$ denotes the canonical projection of $\sigma \in G_K$.

The surjectivity is easy to see from the construction; to see that $DG_K \twoheadrightarrow DR_K$ is the maximal abelian quotient, let $DG_K \twoheadrightarrow D$ be a finite abelian quotient. First, by the commutativity of $D$, the conjugate action of $G_K$ on $\widehat{Q}_K$ gets trivial in $D$, which implies that, for galois conjugate $v, v' \in \V_K$, the images of $Q_v$ and $Q_{v'}$ in $D$ are identified. Since $\V_K/G_K \simeq P_K$ (cf.\ \cite{Hoshi}) and the maps $Q_v \rightarrow D$ and $G_K \rightarrow D$ that restrict $DG_K \rightarrow D$ onto the local factor $Q_v$ and $G_K = DG_K^\times$ respectively factor through $O_{\idp_v} \rightarrow D$ (Lemma \ref{abelianization of Q_v}) and $G_K^{ab} \rightarrow D$, it readily follows that $DG_K \twoheadrightarrow D$ factors through $DR_K = \widehat{O}_K \times_{\widehat{O}_K^\times} G_K^{ab} \twoheadrightarrow D$. Finally, the commutativity of the above diagram (\ref{compatibility}) follows from the facts that $\gamma_v \in Q_v$ is mapped to (some) uniformizer $\pi_{\idp_v} \in O_{\idp_v}$ and that $\gamma_v^{-1} \in W_v^+ \leq G_v \leq G_K$ is mapped to $[\pi_{\idp_v}]^{-1} \in G_K^{ab}$; thus, $\psi_v \in DG_K$ is mapped by $DG_K \twoheadrightarrow DR_K$ to $\idp_v = [\pi_{\idp_v}, \pi_{\idp_v}^{-1}] \in DR_K$ by construction. This completes the proof. 
\end{proof}

\section{Arithmetic of $DG_K$}
\label{s4}
Recall that the category $\cl DR_K$ of finite $DR_K$-sets is (opposite) equivalent to that of finite etale $\Lambda$-rings over $K$ having integral models (cf.\ \cite{Borger_Smit11}); this then implies that the algebraic Witt vectors can be identified with locally constant $K^{ab}$-valued $G_K$-equivariant functions on $DR_K$ (cf.\ \cite{Uramoto21}), yielding an arithmetic interpretation of $DR_K$. The subject of this section is to give a similar interpretation of $DG_K$, relating it with $\Lambda_{O_v}$-rings finite etale over $K_v$ having integral models at all $v \in \V_K$ in the sense of \cite{Borger_Smit11} (\S \ref{s4.1}). This motivates us to study in more detail the semi-galois category $\C_{K_\idp}$ of $\Lambda_{O_\idp}$-rings finite etale over $K_\idp$ having integral models \cite{Borger_Smit11}; and in \S \ref{s4.2}, we prove that (1) the fundamental monoid of $\C_{K_\idp}$ is isomorphic to $Q_\idp \ltimes_{I_\idp} G_\idp$; and (2) the galois objects of $\C_{K_\idp}$ are generated by some $\idp$-typical Witt vectors constructed from the fields of norms of the local field $K_\idp$ (cf.\ \S \ref{s4.2}). 

\subsection{Geometry of finite $DG_K$-sets}
\label{s4.1}
Let $\Cl DG_K$ be the category of (not necessarily finite) right $DG_K$-sets and $DG_K$-equivariant maps; then $\Cl DG_K$ forms a topos, whose \emph{coherent objects} constitute the category $\cl DG_K$ of finite $DG_K$-sets (cf.\ \S 6 \cite{Uramoto17}). For each $S \in \Cl DG_K$, we can restrict the action of $DG_K$ to that of $G_K = DG_K^\times$\footnote{We shall regard the right action of $\sigma \in G_K$ also as the left action of $\sigma^{-1} \in G_K$; in particular, the Galois action on finite (Galois) extensions $L/K$ is considered as the left action. This convention is related to (and compatible with) the fact that $G_K$ acts on $DR_K \simeq \widehat{O}_K \times_{\widehat{O}_K^\times} G_K^{ab}$ via $\sigma \mapsto [1, \sigma^{-1}]$ (cf.\ Proposition 8.2 \cite{Yalkinoglu}).}, which induces a (forgetful) functor $U=f^*: \Cl DG_K \rightarrow \Cl G_K$. We first show that this functor $U$ is \emph{comonadic}; in fact, this holds true in the following generality: 

\begin{prop}
\label{comonadicity}
Let $f: G \rightarrow D$ be a homomorphism between profinite monoids $G$ and $D$\footnote{Here $G$ is not necessarily a profinite group; with this symbol $G$, we just intend to highlight how this proposition applies to our current inclusion $G_K \hookrightarrow DG_K$. In particular, it follows that $\cl DG_K$ is comonadic over $\cl G_K$ (in the abusive sense of Remark \ref{comonad on cl G_K}), thus, $\cl DG_K$ can be dually identified with the category of modules over finite etale $K$-algebras for some monad on them. As we will see below, these modules has a description in terms of $\idp_v$-typical Witt vectors locally for all $v \in \V_K$, cf.\ Remark \ref{comparison with lambda rings}.}. Then the induced functor $U=f^*: \Cl D \rightarrow \Cl G$ has a right adjoint $V: \Cl G \rightarrow \Cl D$ such that $\Cl D$ is canonically equivalent to the category of comodules of the associated comonad $U V: \Cl G \rightarrow \Cl G$.\footnote{In terms of topos theory, this means that $f: G \rightarrow D$ induces a \emph{surjective} geometric morphism $f: \Cl G \twoheadrightarrow \Cl D$; but in this paper, we do not need this topos-theoretic terminology. (That said, see \S 3.2 \cite{Uramoto22} for some general intuition.)} 
\end{prop}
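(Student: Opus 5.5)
Here $\Cl D$ is the category of all right $D$-sets whose action is continuous, i.e.\ each point has an open stabilizer, equivalently every element lies in a finite $D$-subset. The approach is the standard one for a restriction functor along a monoid homomorphism: construct the right adjoint by coinduction, then apply Beck's (dual) monadicity theorem.

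\textbf{The right adjoint.} For a continuous right $G$-set $X$, I take $V(X)$ to be the set of maps $\phi: D \rightarrow X$ with $\phi(f(g) d) = \phi(d) \cdot g$, with right $D$-action $(\phi \cdot d)(d') := \phi(d d')$. Since the abstract coinduced set need not be continuous, I restrict to the \emph{continuous part}: those $\phi$ that are locally constant on $D$ and whose $D$-orbit is finite, equivalently $\phi$ factors through some finite quotient $D \twoheadrightarrow M$ compatible with $f$. The adjunction bijection $\Hom_D(S, V X) \simeq \Hom_G(US, X)$ is $\alpha \mapsto (s \mapsto \alpha(s)(1))$, with inverse $\beta \mapsto (s \mapsto (d \mapsto \beta(s \cdot d)))$. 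If $S$ is continuous, then $d \mapsto \beta(s d)$ is locally constant and has finite orbit, because $sD$ is finite and the action factors through a finite quotient. Hence the inverse lands in the continuous part, and this also shows the continuous part is a $D$-subset.

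\textbf{Comonadicity.} I will apply the dual of Beck's theorem to $U$. The first thing to check is that $U$ reflects isomorphisms. This is immediate, since $U$ preserves underlying sets and a bijective $D$-equivariant map has a $D$-equivariant inverse. The second thing is that $\Cl D$ has equalizers of $U$-split (or coreflexive) pairs and that $U$ preserves them. Equalizers in both $\Cl D$ and $\Cl G$ are computed as subsets of underlying sets, and a $D$-subset of a continuous $D$-set is again continuous. So $U$ creates all equalizers, in particular the required ones. Beck's theorem then gives that the comparison functor $\Cl D \rightarrow (UV)$-$\mathrm{Comod}$ is an equivalence. Alternatively one can argue directly: a coalgebra $X \rightarrow UVX$ is the same as a map $X \times D \rightarrow X$, and the coalgebra axioms say it is an associative, unital, continuous action extending the given $G$-action.

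\textbf{Main obstacle.} The only delicate point is the topology, namely that the continuous part of the coinduction really is a right adjoint, i.e.\ that every equivariant map from a continuous $D$-set lands in it. That is exactly the finiteness argument in the first step. I would also note in passing the footnote's claim of faithfulness. Surjectivity of the geometric morphism follows because $U$ is faithful on the generator $\coprod D/\!\sim$. Faithfulness of $U$ in turn is trivial, since it is the identity on underlying maps.
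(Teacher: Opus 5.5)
There is a concrete error in your construction of $V$, although it is a convention slip rather than a missing idea. You impose the $G$-equivariance on the wrong side of $D$. Take $V(X)=\{\phi:\phi(f(g)d)=\phi(d)\cdot g\}$ with $(\phi\cdot d)(d')=\phi(dd')$. Then the translate $\phi\cdot d$ need not lie in $V(X)$: $(\phi\cdot d)(f(g)d')=\phi(d\,f(g)\,d')$, whereas the condition demands $\phi(f(g)\,d\,d')$. Likewise, the map $d\mapsto\beta(s\cdot d)$ in your inverse bijection would require $\beta(s\cdot f(g)\,d)=\beta(s\cdot d\,f(g))$, which fails in general. Worse, for a right $G$-set your condition is anti-multiplicative in $g$. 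Evaluating it at $g_1g_2$ in two ways forces $\phi(d)\cdot g_1g_2=\phi(d)\cdot g_2g_1$. So for $f=\mathrm{id}_G$ with $G$ non-abelian and $X=G$, your $V(X)$ is empty instead of $G$. The repair is the paper's convention. Make $D$ a right $G$-set by $d\cdot g=d\,f(g)$ and put $V(X)=\Hom_G(D,X)$, the continuous maps with $\phi(d\,f(g))=\phi(d)\cdot g$. Then your $D$-action and both directions of your adjunction bijection go through verbatim.

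A smaller point: your gloss on continuity is wrong for monoids. Open stabilizers give neither finite orbits nor continuity. For example, let the profinite monoid $\mathbb{N}\cup\{\infty\}$ (addition, one-point compactification) act on $\mathbb{N}\sqcup\{*\}$ by translation, with $\infty$ sending everything to $*$. All stabilizers are open, but $0\cdot D$ is infinite and $d\mapsto 0\cdot d$ is discontinuous at $\infty$. Also, a finite $D$-set can carry a discontinuous action. Define continuity as local constancy of $d\mapsto s\cdot d$ for every $s$; this is what your adjunction step actually uses. Your extra finite-orbit condition on $\phi$ is automatic, since a locally constant map on a profinite monoid factors through a finite continuous quotient. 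So, once corrected, your $V$ is exactly the paper's.

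With these repairs your proof is correct, and it differs from the paper's only in how comonadicity is obtained. You mainly invoke the dual Beck theorem. The paper argues directly: it writes down the mutually inverse functors $S\mapsto(\nabla s=[d\mapsto s\cdot d])$ and $(\nabla:S\to\D(S))\mapsto(s\cdot d:=(\nabla s)(d))$, which is precisely your ``alternatively'' sketch. Beck reduces everything to conservativity of $U$ and creation of equalizers, and spares you unpacking the comodule axioms. The direct route shows explicitly that a $UV$-comodule is exactly a continuous $D$-action extending the $G$-action, which is the form in which the paper uses the result for $\D$-modules.
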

\begin{proof}
We just sketch some relevant constructions; the rest proofs are straightforward. Firstly, the right adjoint $V: \Cl G \rightarrow \Cl D$ is defined by assigning to each $G$-set $S \in \Cl G$ the following $D$-set:
\begin{eqnarray}
 V(S) &:=& \Hom_G (D, S),
\end{eqnarray}
where the right-hand side denotes the set of continuous $G$-equivariant maps from $D$ to the (discrete) set $S$, where $D$ has the right $G$-action via $f: G \rightarrow D$; this $V(S)$ is equipped with the right action of $D$ given by: for each $\alpha \in \Hom_G(D, S)$ and $d \in D$, 
\begin{eqnarray}
 \alpha \cdot d &:=& [d' \mapsto \alpha (d d') ],
\end{eqnarray}
Then $\alpha \cdot d \in \Hom_G(D, S)$; and the functor $V: \Cl G \rightarrow \Cl D$ is indeed a right adjoint to $U: \Cl D \rightarrow \Cl G$ with the following counit $\epsilon_S: S \rightarrow VU(S)$ for each $S \in \Cl D$: 
\begin{eqnarray}
 S &\xrightarrow{\epsilon_S}& VU(S) = \Hom_G(D, S)\\
 s &\longmapsto& \epsilon_S(s) := [d \mapsto s \cdot d].
\end{eqnarray}
To highlight some intuition, we shall denote the associated comonad abusively as $\D: \Cl G \rightarrow \Cl G$ (see also Example 3.2.3, \S 3.2 \cite{Uramoto22}). 

Let $(\Cl G)^\D$ be the category of $\D$-comodules; we give an equivalence from $\Cl D$ to $(\Cl G)^\D$. For each $D$-set $S \in \Cl D$, we define the corresponding $\D$-comodule $\nabla: S \rightarrow \D(S)$ as follows:
\begin{eqnarray}
 S &\xrightarrow{\nabla}& \D(S) = \Hom_G(D, S)\\
 s &\longmapsto& \nabla s := [d \mapsto \nabla_d (s):= s \cdot d]. 
\end{eqnarray}
We denote this $\D$-comodule as $\Phi(S) \in (\Cl G)^\D$. Conversely, for each $\D$-comodule $\nabla: S \rightarrow \D(S) \in (\Cl G)^\D$, we define the corresponding $D$-set $\Psi(S) \in \Cl D$ by: for each $s \in S$ and $d \in D$, 
\begin{eqnarray}
 s \cdot d &:=& (\nabla s) (d); 
\end{eqnarray}
we denote the right-hand side also as $\nabla_d (s)$. It is easy to see that the functors $\Phi: \Cl D \rightarrow (\Cl G)^\D$ and $\Psi: (\Cl G)^\D \rightarrow \Cl D$ give inverses to each other, hence the equivalence $\Cl D \simeq (\Cl G)^\D$. 
\end{proof}

\begin{defn}[comonad on $\cl G_K$]
\label{comonad on cl G_K}
We shall say that $\D: \Cl G \rightarrow \Cl G$ gives a comonad on $\cl G$ abusively for short, although $\D$ is not closed within $\cl G$ in general. Also, we shall mean by (finite etale) \emph{$\D$-modules} over $K$ the duals of $\D$-comodules with finite underlying sets for the comonad $\D: \Cl G_K \rightarrow \Cl G_K$ induced by $G_K \hookrightarrow DG_K$. 
\end{defn}

\begin{rem}[finite $DG_K$-sets are dual to finite etale $\D$-modules over $K$]
With this terminology, we shall often identify a finite $DG_K$-set $S \in \cl DG_K$ with a finite etale $\D$-module over $K$ whose underlying $K$-algebra is $X_S: = \Hom_{G_K}(S, \bar{K})$; in particular the category $\cl DG_K$ is often identified with that of finite etale $\D$-modules over $K$. 
\end{rem}

\begin{rem}[comparison with the $\Lambda$-rings of \cite{Borger_Smit08, Borger_Smit11}]
\label{comparison with lambda rings}
By Proposition \ref{comonadicity} applied to the inclusion $G_K^{ab} \hookrightarrow DR_K$, we obtain a comonad on $\cl G_K^{ab}$, thus dually, the concept of finite etale $\D$-modules with abelian components. Then the $\Lambda$-rings of \cite{Borger_Smit11} can be identified with these $\D$-modules; and can be seen as a special case of $\D$-modules for $G_K \hookrightarrow DG_K$. In this sense, the above $\D$-modules can be regarded as non-abelian generalizations of $\Lambda$-rings. In fact, these $\D$-modules can be described by familiar objects in that, localizing at each $v \in V_K$, they are precisely $\Lambda_{O_v}$-rings finite etale over $K_v$ having integral models in the sense of \cite{Borger_Smit11}; in other words, our $\D$-modules are characterized just as finite etale $K$-algebras which locally have the structure of $\Lambda_{O_v}$-rings in $\C_{K_v}$ of \cite{Borger_Smit11} as we are going to discuss below. 
\end{rem}

To see this, let $S \in \cl DG_K$ be a finite $DG_K$-set; and denote by $X_S \in \Et_K$ the dual finite etale $K$-algebra, namely, $X_S := \Hom_{G_K}(S, \bar{K})$. By restricting the action of $G_K$ to the subgroup $G_v$ for each $v \in \V_K$, we have a finite $G_v$-set, denoting $S_v$ (identical to $S$ as a finite set), which is dual to the finite etale $K_v$-algebra $X_{S,v} := \Hom_{G_v}(S_v, \bar{K}_v)$ isomorphic to the localization $X_S \otimes_K K_v$ of $X_S$ over the local field $K_v$. 

This localization clarifies the above-mentioned relationship between finite $DG_K$-sets and $\Lambda_{O_\idp}$-rings finite etale over $K_\idp$ with integral models: Globally, on the one hand, by the non-commutativity of $DG_K$, the action of $\psi_v$ on $S$ does not commute with that of $G_K$; thus, the action of $\psi_v$ does not give a $K$-algebra endomorphism on $X_S$ in general. But locally, on the other hand, the action of $\psi_v$ on $S_v$ commutes with the action of $G_v$, hence, $\psi_v$ gives rise to a $K_v$-algebra endomorphism on $X_{S, v}$. We shall denote it abusively as $\psi_v: X_{S, v} \rightarrow X_{S, v}$; then we can show that this $\psi_v: X_{S, v} \rightarrow X_{S, v}$ is an object of the semi-galois category $\C_{K_v}$ in the sense of \cite{Borger_Smit11}. In fact, this property even \emph{characterizes} finite $DG_K$-sets (and thus $DG_K$). 

To be precise, recall Lemma \ref{DG_K as a fundamental monoid} that finite $DG_K$-sets are characterized as finite sets equipped with actions of $\V_K^*$ (hence, of its profinite completion $\widehat{\V}_K^*$) and $G_K$ satisfying three axioms therein. In particular, the first axiom just says that the action is that of the semi-direct product $\widehat{\V}_K^* \ltimes G_K$ naturally defined; therefore, Lemma \ref{DG_K as a fundamental monoid} could be rephrased as the claim that the finite $DG_K$-sets are those finite $(\widehat{\V}_K^* \ltimes G_K)$-sets which satisfy the second and third axioms therein. Here note that, since $G_K$ embeds in $\widehat{\V}_K^* \ltimes G_K$, a finite $(\widehat{\V}_K^* \ltimes G_K)$-set $S$ induces a finite etale $K$-algebra $X_S$; and their localizations $X_{S, v}$ can be defined similarly as above; further, since $\psi_v$ and $\sigma \in G_v$ commute in this $\widehat{\V}_K^* \ltimes G_K$ too, the action of $\psi_v$ induces a $K_v$-algebra endomorphism $\psi_v: X_{S, v} \rightarrow X_{S, v}$ on the localized $K_v$-algebra $X_{S, v}$ as well. (That is, this construction makes sense at the level of $\widehat{\V}_K^* \ltimes G_K$ more than its quotient $DG_K$.) Therefore, the pair $(X_{S, v}, \psi_v)$ forms a $\Lambda_{O_v}$-ring over $K_v$; and thus, it makes sense to ask whether $(X_{S, v}, \psi_v)$ has an \emph{integral model} in the sense of \cite{Borger_Smit11}. That is:

\begin{defn}[integral model]
Let $S$ be a finite $\widehat{\V}_K^* \ltimes G_K$-set; and $X_{S, v}, \psi_v$ as above. An \emph{integral model of $X_S$ at $v \in \V_K$} is a finite $O_{\idp_v}$-subalgebra $A_v \subseteq X_{S, v} = X_S \otimes K_v$ such that:
\begin{enumerate}
\item $X_{S, v} = K_v \otimes A_v$ (taken over $O_{\idp_v}$); 
\item $\psi_v: X_{S, v} \rightarrow X_{S, v}$ restricts to $A_v$, that is, $\psi_v (A_v) \subseteq A_v$; 
\item the pair $(A_v, \psi_v)$ forms a $\Lambda_{\idp_v}$-ring over $O_{\idp_v}$ in the sense of \cite{Borger_Smit11}. 
\end{enumerate}
Further, we say that $X_S$ is \emph{unramified at $\idp \in P_K$} if $\psi_v: X_{S, v} \rightarrow X_{S, v}$ is an automorphism for some (and all) $v \in \V_K$ over $\idp$. (Here recall $\sigma \cdot \psi_v = \psi_{\sigma(v)} \cdot \sigma$.)
\end{defn}

\begin{thm}[arithmetic characterization of finite $DG_K$-sets]
A finite $\widehat{\V}_K^* \ltimes G_K$-set $S$ is a $DG_K$-set if and only if $X_S$ has integral models at every $v \in \V_K$, which are unramified at all but finitely many primes $\idp$. 
\end{thm}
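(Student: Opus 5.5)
By Lemma \ref{DG_K as a fundamental monoid}, and as noted in the discussion above, a finite $\widehat{\V}_K^* \ltimes G_K$-set $S$ is a $DG_K$-set exactly when it satisfies the second and third axioms of that lemma. The plan is to match these two axioms separately with the two arithmetic conditions. The third axiom (all but finitely many $\psi_v$ act as automorphisms) should correspond to ``unramified at all but finitely many $\idp$''. The second axiom ($\psi_v = \gamma_v^{-1}$ on $S \cdot \psi_v^\omega$, with $I_v$ acting trivially there) should correspond to the existence of an integral model at $v$. Since both sides are stated locally, we fix $v \in \V_K$ and work with the finite $G_v$-set $S_v$. The endomorphism $\psi_v$ commutes with $G_v$, so $(X_{S,v}, \psi_v)$ is a finite etale $\Lambda_{O_v}$-ring over $K_v$.

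The third axiom is the easy half. By definition, $X_S$ is unramified at $\idp$ iff $\psi_v$ is an automorphism of $X_{S,v}$. By duality this holds iff $\psi_v$ is bijective on $S_v$, and it is independent of $v \mid \idp$ by the relation $\sigma \cdot \psi_v = \psi_{\sigma(v)} \cdot \sigma$. Hence ``all but finitely many $\idp$ unramified'' is literally the third axiom.

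The core step is the local statement: $(X_{S,v},\psi_v)$ has an integral model iff the second axiom holds at $v$. For this we invoke the local description of Borger and de Smit \cite{Borger_Smit11}. A finite etale $\Lambda_{O_\idp}$-ring over $K_\idp$ corresponds to a finite set with commuting actions of $G_\idp$ and a Frobenius endomorphism $\psi$. It has an integral model iff this action extends to the monoid they describe as $O_\idp^* \times_{\widehat{O}_\idp^\times}$-type data. Concretely, the condition is this: on the stable part $S\psi^\omega$ (the eventual image, where $\psi$ is bijective), the inertia $I_\idp$ acts trivially and $\psi$ acts as an inverse Frobenius, while on the transient part no condition is imposed. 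Dually, $X_{S,v}$ splits as $X^{\mathrm{et}} \times X^{\mathrm{tr}}$, where $X^{\mathrm{et}} = \Hom_{G_v}(S\psi_v^\omega, \bar K_v)$. The integral model is then built as the product of the unramified integral model of $X^{\mathrm{et}}$, on which $\psi_v$ acts as an arithmetic Frobenius lift, with a suitable model on the transient part. On the transient part $\psi_v$ is nilpotent-like toward the stable part, and the congruence $\psi_v(a) \equiv a^{q} \bmod \idp$ can be arranged using the Witt-vector/$\Lambda$ structure as in \cite{Borger_Smit11}.

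In the converse direction, assume an integral model $A_v$ exists. Iterating $\psi_v$ on $A_v$, the stable factor $A_v\psi_v^\omega$ is a $\Lambda_{O_v}$-ring on which $\psi_v$ is an automorphism. Borger and de Smit show that such a ring must be finite etale over $O_{\idp_v}$, with $\psi_v$ the Frobenius. This yields both conditions of the second axiom: $I_v$ acts trivially on $S\cdot\psi_v^\omega$, and $\psi_v = \gamma_v^{-1}$ there, the sign following from the paper's right/left convention. I expect the main obstacle to be this local equivalence, namely correctly importing the Borger--de Smit classification, which is stated for $O_\idp$-monoid actions, into the present non-abelian $G_v$-setting. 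The key point is that only $G_v$ matters locally, since $\psi_v$ commutes with $G_v$, and the second axiom is precisely the local condition of \cite{Borger_Smit11} transported along the identification $Q_v^{ab} \simeq O_{\idp_v}$ of Lemma \ref{abelianization of Q_v}. Combining the local equivalence for every $v$ with the treatment of the third axiom then gives the theorem via Lemma \ref{DG_K as a fundamental monoid}.
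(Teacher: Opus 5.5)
Your proposal follows the paper's proof. Via Lemma~\ref{DG_K as a fundamental monoid}, you match the third axiom with unramifiedness at almost all primes, and the second axiom with the existence of integral models at each $v$; the latter rests on Theorem~1.1 of \cite{Borger_Smit11}, exactly as in the paper.

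One caveat on your gloss of that theorem: the integral model cannot be a product of models of $X^{\mathrm{et}}$ and $X^{\mathrm{tr}}$, because $\psi_v$ does not respect that splitting. Some points of $S\setminus S\cdot\psi_v^\omega$ are sent into $S\cdot\psi_v^\omega$, so $\psi_v(1,0)-(1,0)^q$ is a nonzero idempotent, and a nonzero idempotent cannot lie in $\idp_v A_v$. The model therefore has to glue the two parts, and you should simply rest that direction on the citation.
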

\begin{proof}
This is a direct consequence of Lemma \ref{DG_K as a fundamental monoid} and Theorem 1.1 \cite{Borger_Smit11}. In fact, the first condition that $X_S$ has integral models at every $v \in \V_K$ is equivalent to the second axiom of Lemma \ref{DG_K as a fundamental monoid} by Theorem 1.1 \cite{Borger_Smit11}; also, the second condition that these integral models are unramified at almost all primes $\idp \in P_K$ is equivalent to the third axiom of Lemma \ref{DG_K as a fundamental monoid}. This completes the proof. 
\end{proof}

\subsection{Local description by fields of norms}
\label{s4.2}
In the sense proved above, the finite $DG_K$-sets are essentially those dual to finite etale $K$-algebras which are locally the objects of the semi-galois category $\C_{K_\idp}$ of finite etale $\Lambda_{O_\idp}$-rings over $K_\idp$ having integral models in the sense of Borger and de Smit \cite{Borger_Smit11}. This motivates us to study this semi-galois category $\C_{K_\idp} =: \C_\idp$ in more detail. 

The first main result in this subsection is a description of the fundamental monoid of $\langle \C_\idp, \F_\idp \rangle$; although this is essentially proved already in Lemma \ref{DG_K as a fundamental monoid}, it would be meaningful to state it as an independent theorem: 
\begin{thm}
Let $\C_\idp$ be the (semi-galois) category of finite etale $\Lambda_{O_\idp}$-rings over $K_\idp$ having integral models in the sense of \cite{Borger_Smit11}; and $\F_\idp: \C_\idp \rightarrow \fsets$ be its fiber functor. Then the fundamental monoid $\pi_1(\C_\idp, \F_\idp)$ is isomorphic to $Q_\idp \ltimes_{I_\idp} G_\idp$. 
\end{thm}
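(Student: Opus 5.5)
I will mimic the proof of Lemma \ref{DG_K as a fundamental monoid} in the local setting, with a single local factor $Q_\idp = W_\idp^+ \cup \{0_\idp\}$ in place of $\widehat{Q}_K$ and $G_\idp$ in place of $G_K$. Here $Q_\idp \ltimes_{I_\idp} G_\idp$ is the quotient of the semidirect product $Q_\idp \ltimes G_\idp$ (with $G_\idp$ acting on $Q_\idp$ by conjugation, which preserves $W_\idp^+$ since $I_\idp$ is normal in $G_\idp$) by the profinite congruence generated by $(\rho u, \sigma) \sim (\rho, u\sigma)$ for $u \in I_\idp$. As in Lemma \ref{generator of DG_K}, the element $\psi_\idp := [\gamma_\idp, \gamma_\idp^{-1}]$ is independent of the Frobenius lift $\gamma_\idp$. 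Because conjugation by $\sigma \in G_\idp$ fixes $\frob_\idp$ modulo $I_\idp$, the element $\psi_\idp$ commutes with $G_\idp$. Moreover $\psi_\idp^\omega = [0_\idp, 1]$. Hence finite $(Q_\idp \ltimes_{I_\idp} G_\idp)$-sets are exactly finite sets with a continuous $G_\idp$-action and a commuting map $\psi_\idp$ such that $\psi_\idp = \gamma_\idp^{-1}$ on $S\cdot\psi_\idp^\omega$ and $I_\idp$ acts trivially there. This is the one-prime analogue of the axioms in Lemma \ref{DG_K as a fundamental monoid}; the third axiom becomes vacuous and the first reduces to commutation. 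Its proof goes through verbatim: define $s\cdot\rho := s\cdot\psi_\idp^n\cdot\rho$ for $\rho = \gamma_\idp^n u \in W_\idp^+$, and $s\cdot 0_\idp := s\cdot\psi_\idp^\omega$.

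Next I will identify this category of local data with $\C_\idp$. Given a finite set $S$ with a $G_\idp$-action and a commuting map $\psi$, the algebra $X_S = \Hom_{G_\idp}(S, \bar{K}_\idp)$ carries a $K_\idp$-algebra endomorphism dual to $\psi$, i.e.\ a finite etale $\Lambda_{O_\idp}$-ring over $K_\idp$. Conversely, every such $\Lambda_{O_\idp}$-ring arises this way, by Galois duality between finite etale $K_\idp$-algebras and finite $G_\idp$-sets. I will then invoke Theorem 1.1 of \cite{Borger_Smit11}, exactly as in the proof of the arithmetic characterization theorem above. It says that an integral model exists if and only if, on the eventual image $S\cdot\psi^\omega$, the map $\psi$ acts as an inverse Frobenius and $I_\idp$ acts trivially (the unramified, Frobenius-lift condition). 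This identifies $\C_\idp$ contravariantly with the category of finite $(Q_\idp \ltimes_{I_\idp} G_\idp)$-sets, compatibly with the underlying-set functors. Under Galois duality the fiber functor $\F_\idp$ is the underlying set of $\Hom_{K_\idp}(-, \bar{K}_\idp)$. Since the fundamental monoid of a semi-galois category is recovered from its category of finite sets together with the forgetful functor (reconstruction as in \cite{Uramoto17}), this gives $\pi_1(\C_\idp, \F_\idp) \simeq Q_\idp \ltimes_{I_\idp} G_\idp$.

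The main obstacle is matching conventions in Borger--de Smit's criterion: whether their Frobenius acts as $\frob$ or $\frob^{-1}$, and left versus right actions. Their result is phrased for the $\Lambda_{O_\idp}$-structure through $DR$-type monoids $O_\idp \times_{O_\idp^\times} G_\idp^{ab}$-like data, so I need to check that the condition ``$\psi$ restricted to the periodic part is the Frobenius'' translates exactly into our second axiom with $\gamma_\idp^{-1}$. I will handle this by noting the paper's convention that $\sigma$ acts on $DR_K$ through $[1, \sigma^{-1}]$, together with the footnote identifying right actions with left actions of inverses. This fixes the sign consistently with the global case already used in the arithmetic characterization theorem.
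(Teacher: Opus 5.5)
Your proposal is correct and follows the paper's proof. The paper likewise uses Theorem 1.1 of \cite{Borger_Smit11} to reduce $\C_\idp$ to finite sets with commuting actions of $\psi_\idp$ and $G_\idp$, with $I_\idp$ acting trivially on $S\cdot\psi_\idp^\omega$ and $\psi_\idp$ acting there as Frobenius, and then reruns the argument of Lemma \ref{DG_K as a fundamental monoid} for the single factor $Q_\idp$ and $G_\idp$ (your reading of that Frobenius condition as the right action of $\gamma_\idp^{-1}$ matches the paper's convention that right actions of $\sigma$ are left actions of $\sigma^{-1}$).
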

\begin{proof}
By Theorem 1.1 \cite{Borger_Smit11}, it suffices to see that the category of finite $(Q_\idp \ltimes_{I_\idp} G_\idp)$-sets is equivalent to that of those finite sets equipped with actions of $\psi_\idp^\nat \times G_\idp$ such that (i) the inertia subgroup $I_\idp$ acts trivially on $S^{unr} := S\cdot \psi_\idp^\omega$; and (ii) the action of $\psi_\idp$ on $S^{unr}$ is equal to that of $\frob_\idp \in G_\idp/I_\idp$. But this can be proved by the same argument as Lemma \ref{DG_K as a fundamental monoid}; in fact, we just need to restrict it to $\psi_\idp$ and $G_\idp$. 
\end{proof}

Our second goal is to construct (a cofinal family of) galois objects of $\C_\idp$ canonically in terms of fields of norms for the local field $K_\idp$, with which we provide yet another description of $Q_\idp \ltimes_{I_\idp} G_\idp$. In the following, let us denote as $DG_\idp := Q_\idp \ltimes_{I_\idp} G_\idp$ for short. For some details on fields of norms, the reader is referred to e.g.\ \cite{Wintenberger, Laubie, Benois}.

Fix an algebraic closure $\bar{K}_\idp$ of the local field $K_\idp$; let $\bar{K}_\idp \supseteq M \supseteq K_\idp$ be an infinite APF extension; we denote by $\FN_M := \FN(M/K_\idp)$ the field of norms of $M/K_\idp$. As is well-known, $\FN_M$ is of positive characteristic $p$ (equal to that of the residue field $k_\idp$ of $\idp$), thus, canonically possesses the \emph{Frobenius map} $\FN_M \ni \alpha \mapsto \alpha^p \in \FN_M$. Our basic idea to construct galois objects of $\C_\idp$ is to use this canonical Frobenius map (as well as the action of $G_\idp$) on $\FN_M$. For this purpose, we shall consider only infinite APF extensions which are Galois over $K_\idp$ and whose residue fields are finite over that of $K_\idp$. 

\begin{lem}[canonical construction of finite $DG_\idp$-sets]
\label{canonical DG_p sets}
Let $M/K_\idp$ be an infinite APF extension, whose residue field $k_M$ is of degree $d$ over $k_\idp = \fld_q$; $\FNO=\FNO_M$ the integer ring of $\FN_M$; and $\FNm=\FNm_M$ be the maximal ideal of $\FNO$. Then, for each $n$, the quotient ring $\FNO/ \FNm^{q^n}$ is finite and admits a natural action of $\psi_\idp$ and $G_\idp$ so that it forms a finite $DG_\idp$-set. 
\end{lem}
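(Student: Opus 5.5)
The plan is to check the local analogue of the axioms of Lemma \ref{DG_K as a fundamental monoid}, in the form used in the proof of the preceding theorem. There, finite $DG_\idp$-sets were identified with finite sets carrying actions of $\psi_\idp^\nat \times G_\idp$ such that (i) $I_\idp$ acts trivially on $S\cdot\psi_\idp^\omega$, and (ii) $\psi_\idp$ acts on $S\cdot\psi_\idp^\omega$ as $\frob_\idp$ (with the paper's sign convention, $\gamma_\idp^{-1}$). So I need three things on $S := \FNO/\FNm^{q^n}$: a continuous $G_\idp$-action, a commuting $\psi_\idp$-action, and conditions (i) and (ii).

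\emph{Finiteness and the $G_\idp$-action.} $\FN_M$ is a complete discretely valued field of characteristic $p$ whose residue field is $k_M \simeq \fld_{q^d}$, so $\FNO \simeq k_M[[\pi]]$. Hence $\FNO/\FNm^{q^n}$ is finite, of cardinality $q^{d q^n}$. Since $M/K_\idp$ is Galois, $G_\idp$ acts on $M$ and on each finite subextension $E$. By functoriality of the norm-compatible systems $(\alpha_E)_E$, it therefore acts on $\FN_M$ by valuation-preserving ring automorphisms, hence on $\FNO$ and on every $\FNm^k$. This action factors through $Gal(M/K_\idp)$. It is continuous on the finite quotient because an element fixing a sufficiently large finite layer $E$ acts trivially modulo $\FNm^{q^n}$; this uses the standard fact that $\FNO/\FNm^{k}$ is identified, for $E$ large, with a quotient $O_E/\idp_E^{k'}$ (Wintenberger).

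\emph{The $\psi_\idp$-action.} I define $\psi_\idp$ to be the $q$-power Frobenius $x\mapsto x^q$. It is a ring endomorphism of $\FNO$ and commutes with $G_\idp$, which acts by ring automorphisms. It maps $\FNm^{k}$ into $\FNm^{qk}\subseteq \FNm^{k}$, so it descends to $\FNO/\FNm^{q^n}$.

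\emph{Conditions (i) and (ii).} Because $x^{q^n}\in \FNm^{q^n}$ for every $x\in\FNm$, the idempotent power $\psi_\idp^\omega$ (which is $\psi_\idp^{m}$ for suitable large $m\ge n$ divisible by $d$) kills $\FNm$ modulo $\FNm^{q^n}$. It also acts on the Teichm\"uller lift $k_M \subseteq \FNO$ as $x\mapsto x^{q^{m}}=x$. So $S\cdot\psi_\idp^\omega$ is the image of the Teichm\"uller set $k_M$, and $\psi_\idp$ acts on it by $x\mapsto x^q$. Since $I_\idp$ acts trivially on residue fields, (i) holds. The arithmetic Frobenius in $G_\idp/I_\idp$ also acts on $k_M$ by $x\mapsto x^q$, which gives (ii), up to the convention of the paper in which right action by $\sigma$ corresponds to left action by $\sigma^{-1}$. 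I would check this sign carefully against the footnote convention in \S\ref{s4.1}, and replace $x\mapsto x^q$ by its inverse on $k_M$ if necessary.

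\emph{Main obstacle.} The delicate point is the continuity step. One must make sure that the action of $G_\idp$ on $\FNO/\FNm^{q^n}$ factors through a finite quotient, and that the Teichm\"uller identification of the residue field is $G_\idp$-equivariant. For this I would invoke Wintenberger's description $\FNO/\FNm^{k}\simeq \varprojlim O_E/\mathfrak{a}_E$ with stabilizing transition maps for APF extensions; the claim then follows from the stabilization.
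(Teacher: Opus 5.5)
Your proposal is correct and follows essentially the same route as the paper. In both, $\psi_\idp$ acts by $\alpha\mapsto\alpha^q$ and $\psi_\idp^\omega$ retracts $\FNO/\FNm^{q^n}\simeq\fld_{q^d}[[X]]/(X^{q^n})$ onto the constants $\fld_{q^d}$, on which $I_\idp$ acts trivially and $\psi_\idp$ agrees with $\frob_\idp$; your finiteness, commutation and continuity checks make explicit what the paper leaves unstated, and your sign worry resolves in your favour, since under the paper's convention the right action of $\gamma_\idp^{-1}$ is the left action of the arithmetic Frobenius lift $\gamma_\idp$, namely $x\mapsto x^q$ on $k_M$, so no replacement is needed.
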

\begin{proof}
We put $S_n := \FNO/\FNm^{q^n}$; in addition to the natural action of $G_\idp$ on $S_n$, we can define the action of $\psi_\idp: S_n \rightarrow S_n$ by $\alpha \mapsto \alpha^q$. To prove the lemma, we need to show that (1) the inertia subgroup $I_\idp$ acts trivially on $S_n^{unr}:= S_n \cdot \psi_\idp^\omega$; and (2) the action of $\psi_\idp$ on $S_n^{unr}$ is equal to that of $\frob_\idp \in G_\idp/I_\idp$. 

For the first one, note that, using an isomorphic representation $\FNO \simeq \fld_{q^d}[[X]]$, the action of $\psi_\idp^\omega$ on $S_n$ maps any $\alpha = \alpha_0 + \alpha_1 X + \cdots$ to $\alpha_0$ modulo $\FNm^{q^n}$; hence, the image $S_n \cdot \psi_\idp^\omega \subseteq S_n$ consists of the constants $\fld_{q^d}$ only. Since the inertia subgroup $I_\idp$ acts trivially on constants, this proves (1). The second one is similar: Indeed, the action of $\psi_\idp$ is defined by the power $\alpha \mapsto \alpha^q$, which clearly coincides with the action of $\frob_\idp \in G_\idp/I_\idp$ on the constants $\fld_{q^d} = S_n \cdot \psi_\idp^\omega \subseteq S_n$. This completes the proof. 
\end{proof}

\begin{defn}[canonical finite quotients of $DG_\idp$]
For each infinite APF extension $M/K_\idp$ and $n \in \nat$, the image of $DG_\idp$ of the above action $DG_\idp \rightarrow \End(S_n)$ is denoted by $D_{M, n}$ with projection $DG_\idp \twoheadrightarrow D_{M, n}$.
\end{defn}

\begin{prop}[local structure of $D_{M, n}$]
\label{local structure of D_M, n}
The $\HH$-classes of $D_{M, n}$ are the $G_\idp$-orbits; and $D_{M, n}$ has the following $\HH$-class decomposition:
\begin{eqnarray}
\label{decomposition of D_M, n}
 D_{M, n} &=& \coprod_{0 \leq l \leq n} Gal(M_{n-l}/K_\idp),
\end{eqnarray}
where $M_i$ denotes the subfield of $M/K_\idp$ fixed by the $(q^i -1)$-th ramification subgroup in the \emph{lower} numbering. In particular, we have:
\begin{eqnarray}
 D_{M, n}^\times &=& Gal(M_n/K_\idp). 
\end{eqnarray}
\end{prop}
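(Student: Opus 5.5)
Write $S_n=\FNO/\FNm^{q^n}$ as in Lemma \ref{canonical DG_p sets}, and identify $\FNO\simeq\fld_{q^d}[[X]]$ with $X$ a uniformizer of $\FN_M$. Then $S_n=\fld_{q^d}[[X]]/(X^{q^n})$. The plan is to describe $D_{M,n}$ concretely as a monoid of self-maps of $S_n$, generated by $G_\idp$ and $\psi_\idp:\alpha\mapsto\alpha^q$, and to read off its $\HH$-classes.

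\textbf{Step 1: normal form.} Using the second axiom (the analogue of Lemma \ref{DG_K as a fundamental monoid} for $DG_\idp$), every element of $DG_\idp$ has the form $\psi_\idp^l\cdot\sigma$ with $\sigma\in G_\idp$ and $l\in\nat\cup\{\omega\}$. The conjugation relation $\sigma\psi_\idp=\psi_\idp\sigma$ holds locally, so products can be moved into this form. Since $\psi_\idp^l$ acts as $\alpha\mapsto\alpha^{q^l}$, it kills everything above degree $0$ once $l\ge n$: indeed $(X^{j})^{q^l}=X^{jq^l}\in\FNm^{q^n}$ for $j\ge1$. Hence the exponents $l\ge n$ all give the same map, $\psi_\idp^n=\psi_\idp^\omega$ on $S_n$. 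This gives a surjection from $\{0,\dots,n\}\times G_\idp$ onto $D_{M,n}$, with the index $l$ recovered as the ideal $\FNm^{q^{n-l}}$, which is the kernel of $\psi_\idp^l$ on $\FNm/\FNm^{q^n}$.

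\textbf{Step 2: stabilizers.} For fixed $l$, determine when $\psi_\idp^l\sigma=\psi_\idp^l\tau$ on $S_n$. Since $\alpha\mapsto\alpha^{q^l}$ is injective on $\FNO/\FNm^{q^{n-l}}$ and maps it isomorphically onto its image in $S_n$, the condition is that $\sigma\tau^{-1}$ acts trivially on $\FNO/\FNm^{q^{n-l}}$. It is cleanest to write $\sigma$ to the left of $\psi_\idp^l$, so that this reduction is immediate. This is the \textbf{main obstacle}: I must show that the kernel of the action of $G_\idp$ on $\FNO_M/\FNm_M^{q^{i}}$ is exactly $Gal(\bar K_\idp/M_{i})$. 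That is, $g$ acts trivially modulo $\FNm^{q^i}$ if and only if $g$ fixes $M$ pointwise up to the lower ramification group $G_{M,q^i-1}$ (together with $Gal(\bar K_\idp/M)$).

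I would prove this with Wintenberger's theory. The action on $\FNO_M$ factors through $Gal(M/K_\idp)$, and the ramification filtration of $Gal(M/K_\idp)$ is recovered from the field of norms: $g\in Gal(M/K_\idp)_{u}$ (lower numbering) if and only if $v_{\FN}(gX-X)\ge u+1$. Let $\FNO$ carry the normalized valuation of $\FN_M$, and set $u=q^i-1$. Then triviality modulo $\FNm^{q^i}$ on the generator $X$, and hence on the whole ring, because the residue field is fixed when $i\ge1$, is equivalent to $g\in Gal(M/K_\idp)_{q^i-1}$. For $i=0$ the quotient is the residue field $\fld_{q^d}$, and the kernel is the inertia group, which matches $M_0$ as the maximal unramified subextension. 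One subtlety needs checking: the operator $gX-X$ versus $gX/X-1$ convention, and that the lower numbering of the infinite APF extension is the one compatible with $v_\FN$. I will cite the comparison theorem of \cite{Wintenberger} (see also \cite{Laubie}) for this.

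\textbf{Step 3: $\HH$-classes.} Step 2 identifies the layer $\{\psi_\idp^l\sigma\}$ with $Gal(M_{n-l}/K_\idp)$ as a $G_\idp$-set, which gives the decomposition (\ref{decomposition of D_M, n}). The layers are distinct because the kernel ideal differs. Units are exactly the layer $l=0$, since any $l\ge1$ has nonzero kernel; hence $D_{M,n}^\times=Gal(M_n/K_\idp)$. Finally, for the $\HH$-classes: within a layer, $x=\psi_\idp^l\sigma$ and $y=\psi_\idp^l\tau$ satisfy $x=y\tau^{-1}\sigma$ and $x=\sigma\tau^{-1}y$, so the layers are $\HH$-related. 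Conversely, multiplication can only raise $l$, so $\RR$- and $\LL$-equivalence force equal $l$. Thus the $\HH$-classes are exactly the layers, that is, the $G_\idp$-orbits.
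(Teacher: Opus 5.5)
Your proof is correct in substance and follows the same route as the paper. In both, the key step is that the $G_\idp$-stabilizer of $\psi_\idp^l$ acting on $S_n=\FNO/\FNm^{q^n}$ is the $(q^{n-l}-1)$-th lower ramification group, read off from Wintenberger's comparison $g\in Gal(M/K_\idp)_u \Leftrightarrow v(gX-X)\geq u+1$ (\S 3.3 of \cite{Wintenberger}). After that, the layers $0\leq l\leq n$ are separated: the paper uses the images $S_n\cdot\psi_\idp^l$, and you use the kernels $\FNm^{q^{n-l}}/\FNm^{q^n}$. Your observation that the $q^l$-power map induces an injection $\FNO/\FNm^{q^{n-l}}\hookrightarrow S_n$ gives both directions of the stabilizer computation more cleanly than the paper's coefficient comparison at $\alpha=X$.

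One assertion in Step 1 is false and needs fixing. When $d>1$, the maps $\psi_\idp^l$ with $l\geq n$ do not all coincide on $S_n$, and $\psi_\idp^n\neq\psi_\idp^\omega$ unless $d\mid n$. Raising to the $q^l$-th power kills the positive-degree part but not the constant term $\alpha_0\in\fld_{q^d}$. On that constant term $\psi_\idp$ still acts by $x\mapsto x^q$, which has order $d$. So for $l\geq n$ the map $\psi_\idp^l$ is $\alpha\mapsto\alpha_0^{q^l}$, whereas $\psi_\idp^\omega$ is $\alpha\mapsto\alpha_0$, as in the proof of Lemma \ref{canonical DG_p sets}.

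The repair is exactly the second axiom. Let $\phi\in G_\idp$ be a Frobenius lift acting on $\fld_{q^d}$ by $x\mapsto x^q$. Then on $S_n$ you have $\psi_\idp^l=\psi_\idp^n\phi^{l-n}$ for $l\geq n$, and $\psi_\idp^\omega=\psi_\idp^n\phi^m$ for any $m\geq 0$ with $m\equiv -n \pmod d$. So all these elements lie in the $G_\idp$-orbit of $\psi_\idp^n$, which is consistent with the bottom layer being $Gal(M_0/K_\idp)$, of order $d$. Consequently the surjection $\{0,\dots,n\}\times G_\idp\twoheadrightarrow D_{M,n}$ still holds, and products satisfy $\psi_\idp^l\sigma\cdot\psi_\idp^{l'}\sigma'\in\psi_\idp^{\min(l+l',n)}G_\idp$. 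With this correction, your Steps 2 and 3 go through unchanged.
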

\begin{proof}
In this proof, put $D := D_{M, n}$. To see the structure of $\HH$-classes of $D$, we first see that the isotropy subgroup of $\psi_\idp^l \in D$ ($0 \leq l \leq n$) under the action of $G_\idp$ is the $(q^{n-l} - 1)$-th ramification subgroup (in the lower numbering), which we shall denote by $G_l$ for short; that is, $\sigma \cdot \psi_\idp^l = \psi_\idp^l$ in $D$ if and only if $\sigma \in G_l$. The equality $\sigma \cdot \psi_\idp^l = \psi_\idp^l$ means that, by definition of $D$ and $S_n \simeq \fld_{q^d}[[X]]/(X^{q^n})$, we have:
\begin{eqnarray}
  \sigma (\alpha^{q^l}) &\equiv& \alpha^{q^l} \mod X^{q^n}; 
\end{eqnarray}
for any $\alpha = \sum \alpha_i X^i \in \fld_{q^d}[[X]]$. Restricting this on the residue field $k_M$ by taking modulo $X$, this particularly implies that $\sigma \in I_\idp$; moreover, by applying to $\alpha = X$ in particular, this means that:
\begin{eqnarray}
 \sigma (X^{q^l}) &\equiv& X^{q^l} \mod X^{q^n}. 
\end{eqnarray}
To be more explicit, let $\sigma(X) = \sum_i \alpha_i X^i $ with $\alpha_1 = 1$. Then the above congruence is equivalent to:
\begin{eqnarray}
 \sum_{i\geq 1} (\alpha_i)^{q^l} X^{i q^l} &\equiv& X^{q^l} \mod X^{q^n}. 
\end{eqnarray}
Subtracting both sides by $X^{q^l}$, we obtain:
\begin{eqnarray}
  \sum_{i \geq 2} (\alpha_i)^{q^l} X^{i q^l} &\equiv& 0 \mod X^{q^n}. 
\end{eqnarray}
This is equivalent to that, if $i_0$ is the first index with $\alpha_i \neq 0$, we have $i_0 q^l \geq q^n$, that is, $i_0 \geq q^{n-l}$; in other words, $\sigma(X) \equiv X$ modulo $X^{q^{n-l}}$, which means that $\sigma \in G_l$ (cf.\ \S 3.3 \cite{Wintenberger}). From this, we can deduce that the decomposition in (\ref{decomposition of D_M, n}) gives the $G_\idp$-orbit decomposition. To prove that this is also the $\HH$-class decomposition, it suffices to note that $\psi_\idp^s$ and $\psi_\idp^t$ for distinct $s, t \geq 0$ cannot be $\HH$-equivalent unless $s, t \geq n$ because of $S_n \cdot \psi_\idp^s \neq S_n \cdot \psi_\idp^t$. This completes the proof. 
\end{proof}

\begin{thm}[field-of-norms description of $DG_\idp$]
\label{DG_p by fields of norms}
The finite monoids $D_{M, n}$ constitute an inverse system, whose limit is isomorphic to $DG_\idp$. That is:
\begin{eqnarray}
 DG_\idp &\simeq& \lim_{\leftarrow M, n} D_{M, n}. 
\end{eqnarray}
\end{thm}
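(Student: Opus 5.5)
The proof will use the standard fact that a profinite monoid is the inverse limit of its finite quotients $DG_\idp \twoheadrightarrow \mathrm{im}(DG_\idp \rightarrow \End(S))$, where $S$ runs over a family of finite $DG_\idp$-sets that \emph{separates points} of $DG_\idp$. Equivalently, every finite $DG_\idp$-set is a subquotient of finite products and coproducts of the $S_n = \FNO_M/\FNm_M^{q^n}$. So the plan has three parts. First, show that the $D_{M,n}$ form an inverse system. Second, show that the natural map $DG_\idp \rightarrow \lim_{M,n} D_{M,n}$ is surjective. Third, show that it is injective.

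\textbf{Inverse system.} For $n \le n'$, the reduction $S_{n'} \twoheadrightarrow S_n$ is $DG_\idp$-equivariant: it commutes with $G_\idp$, and with $\alpha \mapsto \alpha^q$. Hence every endomorphism induced on $S_{n'}$ descends to $S_n$, which gives $D_{M,n'} \twoheadrightarrow D_{M,n}$.

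For $M \subseteq M'$, both infinite APF and Galois with finite residue extensions, the norm maps give an equivariant embedding $\FN_M \hookrightarrow \FN_{M'}$. By the functoriality of fields of norms, $\FN_{M'}/\FN_M$ is finite separable. Then $\FNO_M/\FNm_M^{q^n}$ sits $DG_\idp$-equivariantly inside $\FNO_{M'}/\FNm_{M'}^{e q^n}$, where $e$ is the ramification index. That quotient is in turn a quotient of $S_{n''}$ for any $q^{n''} \ge e q^n$. So $D_{M,n}$ is a quotient of $D_{M',n''}$, and the index set (pairs $(M,n)$) is directed, since the compositum of two such $M$ is again of this type. Surjectivity of $DG_\idp \rightarrow \lim D_{M,n}$ is automatic: $DG_\idp$ is compact and each map $DG_\idp \rightarrow D_{M,n}$ is surjective.

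\textbf{Injectivity.} I would separate points of $DG_\idp = Q_\idp \ltimes_{I_\idp} G_\idp$ in three steps.

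\begin{enumerate}
\item \emph{Units.} By Proposition~\ref{local structure of D_M, n}, $D_{M,n}^\times = Gal(M_n/K_\idp)$. As $M$ and $n$ vary, the fields $M_n$ exhaust all finite Galois extensions of $K_\idp$. For this I would use that every finite Galois $L/K_\idp$ lies in some infinite APF Galois extension with finite residue field, for instance $L$ composed with a Lubin--Tate tower. Since $M_n \uparrow M$, the unit groups give $\lim D_{M,n}^\times = G_\idp$, so $G_\idp$ embeds.
\item \emph{Powers of $\psi_\idp$.} The elements $\psi_\idp^l \sigma$ with different $l$ are separated because $S_n\cdot\psi_\idp^l$ strictly decreases in $l$ for $l<n$; this is the last step of the proof of Proposition~\ref{local structure of D_M, n}.
\item \emph{Fixed $l$.} Two elements $\psi_\idp^l\sigma$ and $\psi_\idp^l\tau$ coincide in every $D_{M,n}$ exactly when $\sigma\tau^{-1}$ lies in the stabilizer $G_l$, the $(q^{n-l}-1)$-th lower ramification group of $M$. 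Letting $n\to\infty$ forces $\sigma\tau^{-1}$ to act trivially on $M$. Letting $M$ grow, $\sigma=\tau$ in $G_\idp$.
\end{enumerate}

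This matches the structure of $DG_\idp$. Its elements are $[\gamma^l u,\sigma]$, equivalently $\psi_\idp^l\sigma'$, together with $0_\idp$-type elements $[0_\idp,\sigma]$. The latter are determined only modulo $I_\idp$, acting through $G_\idp/I_\idp$, and they are separated by their action on the constants $\fld_{q^d}$ as $d$ grows.

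\textbf{Main obstacle.} The step I expect to be hardest is the construction of transition maps and cofinality as $M$ varies. Functoriality of $\FN$ under $M\subseteq M'$ is standard (Wintenberger), but I must check that the Frobenius $\alpha\mapsto\alpha^q$ and the $G_\idp$-action are compatible with the embedding modulo the appropriate powers of the maximal ideal. I must also check that APF towers are rich enough that the $M_n$ exhaust all finite Galois extensions. If the embedding route is awkward, the fallback is to skip transition maps entirely. Injectivity and surjectivity of $DG_\idp \rightarrow \prod D_{M,n}$ onto its image already show that $DG_\idp$ is the limit over the directed family of its finite quotients generated by the $S_n$.
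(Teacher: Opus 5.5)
\textbf{Your separation argument is sound, but the inverse-system step has a gap.}

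The point-separation part works. You write every element of $DG_\idp$ as $\psi_\idp^l\sigma$ or $[0_\idp,\sigma]$, with $\sigma$ mattering only modulo $I_\idp$. Different $l$ are told apart by $|S_n\cdot\psi_\idp^l|$. For fixed $l$, the stabilizers from Proposition~\ref{local structure of D_M, n} shrink to $1$ as $n$ and $M$ grow. This route differs from the paper's. The paper proves cofinality instead: it splits an arbitrary finite quotient $D$ into $\psi_\idp$-depth layers $D_i=\Hom_{K_\idp}(L_i,\bar{K}_\idp)$, chooses $M,k_0$ with $L_i\subseteq M_{k_0+i}$, and writes down a surjection $D_{M,k_0+n}\twoheadrightarrow D$. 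That argument gives directedness of the family for free.

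The gap is in your transition maps for $M\subseteq M'$. Wintenberger's functoriality makes $\FN_{M'}$ a finite separable extension of $\FN_M$ only when $M'/M$ is finite. For two infinite APF extensions, $M'/M$ is typically infinite. Examples are $\mathbb{Q}_p(\mu_{p^\infty})\subseteq\mathbb{Q}_p(\mu_{p^\infty},p^{1/p^\infty})$, or $M\subseteq MM'$ in your own directedness step. In that case no such embedding exists. The maps induced by norms go the other way, $\FN_{M'}\to\FN_M$, and are only multiplicative. So you have not shown that the $D_{M,n}$ form a directed inverse system. Your fallback only identifies $DG_\idp$ with its closed image in $\prod D_{M,n}$, i.e.\ a limit over finite sub-products. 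That is weaker than the statement.

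\textbf{How to repair it with your own step 3.} As a quotient of $DG_\idp$, $D_{M,n}$ depends only on $n$ and the finite tower $M_0\subseteq\cdots\subseteq M_n$:
\begin{itemize}
\item for $l<n$, $\psi_\idp^l\sigma=\psi_\idp^l\tau$ if and only if $\sigma\tau^{-1}$ fixes $M_{n-l}$;
\item all $\psi_\idp^l\sigma$ with $l\ge n$, together with all $[0_\idp,\sigma]$, form one orbit, and two of them are equal exactly when they act the same way on the residue field of $M_0$;
\item elements in different layers are never identified.
\end{itemize}

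Consequently, if $M_n\subseteq M''_k$, the projection $DG_\idp\twoheadrightarrow D_{M,n}$ factors, necessarily uniquely, through $D_{M'',k+n}$. For $l<n$ use $M''_{k+n-l}\supseteq M''_k\supseteq M_n\supseteq M_{n-l}$. For the remaining layers use $M_0\subseteq M''_0$, which holds because $M_0$ is unramified.

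For directedness, take $(M,n)$ and $(M',n')$. Let $M''$ be the Galois field $M_nM'_{n'}$ composed with a Lubin--Tate tower. It is APF as a finite extension of an APF extension, and it is Galois with finite residue field. Choose $k$ with $M_nM'_{n'}\subseteq M''_k$. Then both projections factor through $D_{M'',k+\max(n,n')}$. This avoids any claim that the compositum $MM'$ is APF, which you assert without proof. The index shift $k$ plays the role of the paper's $k_0$.
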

\begin{proof}
By definition of $D_{M, n}$, we can see that they constitute a natural inverse system with respect to the  field extensions $M \subseteq M'$ and the (additive) ordering $n \leq n'$; we prove that they converges to $DG_\idp$. Since each $D_{M, n}$ is a finite quotient of $DG_\idp$ (Lemma \ref{canonical DG_p sets}), it suffices to see that the finite quotients $D_{M, n}$ separate distinct elements of $DG_\idp$ (or equivalently, any finite quotient $DG_\idp \twoheadrightarrow D$ factors through some $D_{M, n}$). 

Let $\phi: DG_\idp \twoheadrightarrow D$ be any finite quotient, whence $D$ admits natural actions of $\psi_\idp$ and $G_\idp$ which commute with each other. As the argument of Theorem 1.1 \cite{Borger_Smit11}, decompose $D$ as $D = \coprod_i^n D_i$, where $D_0 := D \cdot \psi_\idp^\omega$, and $D_i = \{s \in D \mid \textrm{$s \cdot \psi_\idp^i \in D_0$ and $s \cdot \psi_\idp^{i-1} \not \in D_0$} \}$, which are closed under the action of $G_\idp$. Moreover, since $D$ is now a galois object of $\C_\idp$, each $D_i$ is a $G_\idp$-orbit (that is, connected object of $\cl G_\idp$). Thus, for each $i$, we have a finite extension $L_i/K_\idp$ such that $D_i = \Hom_{K_\idp}(L_i, \bar{K}_\idp)$; and the dual of $D$ is of the form $X_D = L_0 \times L_1 \times \cdots \times L_n$. With this in mind, we now find some infinite APF extension $M/K_\idp$ and $m \geq 0$ such that $D$ is a quotient of $D_{M, m}$. Since $\bar{K}_\idp$ is a union of infinite APF extensions whose residue fields are finite over that of $K_\idp$ (cf.\ Lemma 5, \S 3.1 \cite{Laubie}), we can find $M$ so that $L_0 \subseteq M_0$, where $M_i$ denotes the one defined in Proposition \ref{local structure of D_M, n}; in particular, note that $M_0/K_\idp$ (and thus, $L_0/K_\idp$) is unramified. Furthermore, since $M_i$ gets larger as $i$ gets larger, we can re-choose $M/K_\idp$ so that, for some $k_0$, we have $L_i \subseteq M_{k_0 + i}$ for any $0 \leq i \leq n$. Then, with this $M$ and $m = k_0 + n$, we see that $D$ is a quotient of $D_{M, m}$. 

To prove this, we first note the decompositions $D = \coprod_i D_i$ and $D_{M, m} = \coprod_j S_j$ as finite $G_\idp$-sets, where $D_i = \Hom_{K_\idp} (L_i, \bar{K}_\idp)$ and $S_j := \Hom_{K_\idp}(M_j, \bar{K}_\idp)$; and by construction, we have the natural surjections $\phi_{k_0+i}: S_{k_0+i} \twoheadrightarrow D_i$ for $0 \leq i \leq n$ of finite $G_\idp$-sets, which give a map $\coprod_{j \geq k_0} S_j \twoheadrightarrow D$; for $0 \leq j \leq k_0$ too, we define $\phi_j: S_j \rightarrow D_0 \subseteq D$ by $\phi_j (\sigma_j) := \frob_\idp^{k_0 - j} \cdot \bar{\sigma}_j$, where $\bar{\sigma}_j \in \Hom_{K_\idp}(L_0, \bar{K}_\idp)$ denotes the restriction of $\sigma_j \in \Hom_{K_\idp}(M_j, \bar{K}_\idp)$ onto $L_0 \subseteq M_j$; we remark that $\frob_\idp$ makes sense on $L_0$ because $L_0/K_\idp$ is unramified. Totally, we have constructed $\phi_j: S_j \rightarrow D$ for any $0 \leq j \leq m$, hence $\phi: D_{M, m} = \coprod_j S_j \rightarrow D$; and by definition, it is straightforward to see that this map $\phi$ is in fact surjective and \emph{$DG_\idp$-equivariant} (then $\phi: D_{M, m} \twoheadrightarrow D$ is also a monoid homomorphism as they are both galois objects of $\cl DG_\idp$). This completes the proof. 
\end{proof}

\begin{rem}[construction of algebraic $\idp$-typical Witt vectors]
By the same argument as \S 3.1 \cite{Uramoto21}, we can prove that the algebraic $\idp$-typical Witt vectors (that generate $\Lambda_{O_\idp}$-rings in $\C_\idp$) are precisely the locally constant $\bar{K}_\idp$-valued $G_\idp$-equivariant functions on $DG_\idp$; and Theorem \ref{DG_p by fields of norms} above implies that they are precisely the $\bar{K}_\idp$-valued $G_\idp$-equivariant functions on finite quotients $D_{M, n}$ of $DG_\idp$. By their origin, the latter functions can be canonically (or tautologically) constructed by each element $\alpha \in \FN_M$.

To this end, fix an infinite APF extension $M/K_\idp$, and let $\alpha \in \FN_M$. By definition of the field of norms \cite{Benois}, $\alpha = (\alpha_F)$ consists of elements $\alpha_F \in F$ indexed by subfields $M \supseteq F \supseteq K_\idp$ finite over $K_\idp$ that are compatible under norms. 
For each $n$, recall the decomposition $D_{M, n} = \coprod_l Gal(M_{n-l}/K_\idp)$ (cf.\ Proposition \ref{local structure of D_M, n}); then, we shall define $\hat{\alpha}: D_{M, n} \rightarrow \bar{K}_\idp$ for each $\alpha \in \FN_M$ by setting the value of $\hat{\alpha}$ at $\sigma_l \in Gal(M_{n-l}/K_\idp) \subseteq D_{M, n}$ as:
\begin{eqnarray}
 \hat{\alpha} (\sigma_l)  &:=& \sigma_l (\alpha_{M_{n-l}}).
\end{eqnarray}
This function $\hat{\alpha}: D_{M, n} \rightarrow \bar{K}_\idp$ is clearly $\bar{K}_\idp$-valuled and $G_\idp$-equivariant. In fact, by Theorem \ref{DG_p by fields of norms} and the origin of $\hat{\alpha}$, we can actually prove that these $\hat{\alpha}$ generate all algebraic $\idp$-typical Witt vectors over $K_\idp$. In this way, we could give yet another classification of $\Lambda_{O_\idp}$-rings in $\C_\idp$, which is in a sense dual to Theorem \ref{DG_p by fields of norms}. 
\end{rem}

\section{Global quotient of $DG_K$}
\label{s5}
As proved above, the structure of $DG_K$ can be well understood at least at each local level; but due to the fact that we impose no constraint on the relationship among the local factors $Q_v$ (except for those over the same prime $\idp \in P_K$, that is, $\sigma \cdot \psi_v = \psi_{\sigma(v)} \cdot \sigma$), $DG_K$ has so many finite quotients that hardly admits a thorough description of its semigroup structure (say, classification of idempotents, $\HH$-classes) unlike the case of $DR_K$. This is a major reason why we call $DG_K$ as a \emph{free} deformation of $G_K$; and gives rise to an independent problem of imposing a ``right constraint'' among the local factors $Q_v$ such that the quotient of $DG_K$ with that constraint gets an arithmetically good property at the level of global fields too. 

To be specific, we return to our study on the finite monoids $D_{L/K}$ discussed in Remark \ref{nonabelian DR_f} and constructed in \S \ref{s3.2}. We see that the finite monoids $D_{L/K}$ constructed in the proof of Theorem \ref{DG_K deforms G_K} constitute an inverse system, whose limit $\D G_K$ can admit the complete description of idempotents and their $\HH$-classes as for $DR_K$. That is, (1) $\D G_K^\times$ is still isomorphic to $G_K$; (2) the idempotents of $\D G_K$ bijectively correspond to subsets of $P_K$; (3) the maximal Galois groups $G_{K, S}$ with restricted ramifications appear as the maximal closed subgroups at the idempotents corresponding to $S$; and (4) every maximal closed subgroups at idempotents of $\D G_K$ is of this form. By the fact that $\D G_K$ is a quotient of $DG_K$, the finite $\D G_K$-sets all belong in the category $\cl DG_K$ of finite $DG_K$-sets, whose structures are well-understood as seen in \S \ref{s4}. We shall not characterize which finite $DG_K$-sets are $\D G_K$-sets; our goal in this section is to exemplify the fact that such a quotient $\D G_K$ of $DG_K$ can exist\footnote{Another possible approach to gain a good quotient of $DG_K$ is to construct a family of locally constant $\bar{K}$-valued $G_K$-equivariant functions on $DG_K$ of geometric origin such that their \emph{common domain $D$ of quotient} $DG_K \twoheadrightarrow D$ (i.e.\ all such functions factor through this quotient) admits a good (geometric) description, yet we still have $D^\times = G_K$; see our previous work \cite{Uramoto21, Uramoto23} for some concrete case study in this direction. Indeed, $DG_K$ is topologically (and \emph{canonically}) generated by the Frobenius elements $\psi_v$; and in our previous case \cite{Uramoto21, Uramoto23}, the Frobenius element $\psi_\idp \in DR_K$ admits a geometric interpretation in that $\psi_\idp$ acts on (the torsion points of) a complex torus $\comp^g/\Phi(\ida)$ as the isogenous quotient $\comp^g/\Phi(\ida) \twoheadrightarrow \comp^g/\Phi(\eta(\idp)^{-1} \ida)$ (cf.\ \S 3 \cite{Uramoto23}); this geometric operation itself makes sense independently of Galois groups; but via mod-$\idp$ reductions, it could be related to the Frobenius $\sigma_\idp \in G_K^{ab}$. In general, if some $DG_K \twoheadrightarrow D$ could admit a good geometric description, so would its unit group $D^\times = G_K$, whence the $\HH$-class structure of $D$ would represent the ramification-theoretic structure of $G_K$ too, which is surely the very subject of non-abelian class field theory. We did not discuss this geometric direction in this paper; but focused on developing more semigroup-theoretic ingredient of our subject.}.

To this end, let us start with recalling the definition of $D_{L/K}$ (cf.\ Theorem \ref{DG_K deforms G_K}):

\begin{defn}[finite monoids $D_{L/K}$]
For a finite Galois extension $L/K$, define $\idf:= \idf_{L/K} \in I_K$ as follows: for each $\idp \in P_K$, 
\begin{eqnarray}
 v_\idp(\idf) &:=& \min \{e \in \nat \mid Gal^e(L_\idP/K_\idp) = 1\};
\end{eqnarray}
where $\idP \in P_L$ is a prime of $L$ over $\idp$; and $Gal^e(L_\idP/K_\idp)$ is the ramification subgroup of $Gal(L_\idP/K_\idp)$ in the upper numbering; since $L/K$ is Galois, the value $v_\idp(\idf)$ does not depend on the choice of $\idP\mid \idp$. For each $\idd \mid \idf$, let us denote by $L^{(\idf/\idd)}$ the subfield of $L/K$ fixed by the composition of the ramification subgroups $Gal^{v_\idp(\idf/\idd)}(L_\idP/K_\idp)$, where $\idP$ ranges over all $\idP \mid \idp$; and $\idp$ ranges over $P_K$; then, $L^{(\idf/\idd)}$ is Galois over $K$, so that $Gal(L^{(\idf/\idd)}/K)$ makes sense. 

Then, the underlying set of our target monoid $D_{L/K}$ is defined as follows:
\begin{eqnarray}
 D_{L/K} &:=& \coprod_{\idd \mid \idf} \{\idd\} \times Gal(L^{(\idf/\idd)}/K); 
\end{eqnarray}
which is equipped with a monoid structure given by the following multiplication:
\begin{eqnarray}
 (\idd, \sigma) \cdot (\idd', \sigma') &:=& (\idd'', \sigma \cdot \sigma');
\end{eqnarray}
where $\idd'' := (\idf, \idd \cdot \idd')$ and the product of $\sigma \in Gal(L^{(\idf/\idd)}/K)$ and $\sigma' \in Gal(L^{(\idf/\idd')}/K)$ is taken in the smaller group $Gal(L^{(\idf/\idd'')}/K)$, hence well-defined. 
\end{defn}

\begin{lem}[inverse system of $D_{L/K}$]
For any finite Galois extensions $L' \supseteq L \supseteq K$, there exists a canonical surjective homomorphism $r_L^{L'}: D_{L'/K} \twoheadrightarrow D_{L/K}$, with which the finite monoids $\{ D_{L/K} \}_L$ constitute an inverse system of finite monoids. 
\end{lem}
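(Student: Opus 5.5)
The plan is to define $r_L^{L'}$ explicitly on the pieces of the decomposition and then check that it is well defined, multiplicative, surjective and functorial. Write $\idf' := \idf_{L'/K}$ and $\idf := \idf_{L/K}$. The first step is to show $\idf \mid \idf'$. By Herbrand's theorem the upper numbering is compatible with quotients: for $\idP' \mid \idP \mid \idp$ we have
\[
Gal^e(L'_{\idP'}/K_\idp)\,Gal(L'_{\idP'}/L_\idP)/Gal(L'_{\idP'}/L_\idP) = Gal^e(L_\idP/K_\idp).
\]
Hence $Gal^e(L'_{\idP'}/K_\idp) = 1$ forces $Gal^e(L_\idP/K_\idp) = 1$, so $v_\idp(\idf) \leq v_\idp(\idf')$.

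Next, define $r := r_L^{L'}$ on the piece indexed by $\idd' \mid \idf'$. Set $\idd := (\idf, \idd')$ and send
\[
(\idd', \sigma') \longmapsto \bigl(\idd, \sigma'|_{L^{(\idf/\idd)}}\bigr).
\]
The key verification is that $L^{(\idf/\idd)} \subseteq L'^{(\idf'/\idd')}$, so that the restriction makes sense. Fix $\idp$ and write $a = v_\idp(\idf)$, $a' = v_\idp(\idf')$, $b' = v_\idp(\idd')$, $b = \min(a, b')$. We need the image of $Gal^{a'-b'}(L'_{\idP'}/K_\idp)$ in $Gal(L_\idP/K_\idp)$ to lie in $Gal^{a-b}(L_\idP/K_\idp)$. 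By Herbrand again, that image is $Gal^{a'-b'}(L_\idP/K_\idp)$, so it suffices to have $a' - b' \geq a - b$, which we check in two cases.
\begin{itemize}
\item If $b' \leq a$, then $b = b'$ and $a' - b' \geq a - b$ holds because $a' \geq a$.
\item If $b' > a$, then $b = a$ and the right-hand side is $0$, so the inequality is trivial.
\end{itemize}
Taking the subgroup generated over all $\idp$ and all $\idP'$ gives that the normal subgroup defining $L'^{(\idf'/\idd')}$ maps into the one defining $L^{(\idf/\idd)}$. Equivalently, $L^{(\idf/\idd)}$ is contained in $L'^{(\idf'/\idd')}$.

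For multiplicativity, take $\idd'' = (\idf', \idd'_1\idd'_2)$. One checks that $(\idf, \idd'') = (\idf, (\idf,\idd'_1)(\idf,\idd'_2))$ by comparing $\idp$-adic valuations: both sides equal $\min(a, b'_1 + b'_2)$ because $a \leq a'$. Restriction of Galois elements is a group homomorphism compatible with the further projections to the smaller groups, so $r$ respects the product. The unit $(1, \mathrm{id})$ goes to $(1, \mathrm{id})$.

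Surjectivity is proved piecewise. Given $(\idd, \tau)$ with $\idd \mid \idf$, the divisor $\idd$ also divides $\idf'$, and $(\idf, \idd) = \idd$. Every element of $Gal(L^{(\idf/\idd)}/K)$ lifts along the surjective restriction from $Gal(L'^{(\idf'/\idd)}/K)$, using the containment just proved.

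Transitivity $r_L^{L'} \circ r_{L'}^{L''} = r_L^{L''}$ follows from $(\idf, (\idf', \idd'')) = (\idf, \idd'')$, since $\idf \mid \idf'$, together with transitivity of restriction. The maps are canonical, involving no choices, so $\{D_{L/K}\}_L$ forms an inverse system indexed by the directed set of finite Galois extensions.

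The main obstacle is the containment of fixed fields, that is, the compatibility of upper-numbering ramification groups under quotients in the non-abelian setting. I would cite Herbrand's theorem (Serre, Local Fields, IV Prop. 14) locally at each $\idP'$. I would also note that taking all conjugate primes $\idP'$ above $\idp$ maps onto all $\idP$ above $\idp$, so the composita of these subgroups correspond correctly.
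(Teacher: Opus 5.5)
Your proposal is correct and follows the paper's proof: the same map $(\mathfrak{d}',\sigma')\mapsto((\mathfrak{f},\mathfrak{d}'),\sigma'|)$, resting on the same two facts, namely $\mathfrak{f}\mid\mathfrak{f}'$ and the containment $L^{(\mathfrak{f}/(\mathfrak{f},\mathfrak{d}'))}\subseteq L'^{(\mathfrak{f}'/\mathfrak{d}')}$, both obtained from the compatibility of upper-numbering ramification groups with quotients (Herbrand). Beyond the paper, you also write out the valuation checks for well-definedness, multiplicativity, surjectivity and transitivity, which the paper only asserts as ``clear'' or ``easy''.
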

\begin{proof}
Let $\idf, \idf' \in I_K$ be the ideals constructed above for $L, L'/K$ respectively; then by the transitivity of ramification subgroups (in upper numbering) under quotient, we deduce that $\idf \mid \idf'$. To construct a surjective homomorphism $r^{L'}_L: D_{L'/K} \twoheadrightarrow D_{L/K}$, let us write as $(\idd, \sigma)_L \in D_{L/K}$ with the subscript $L$ that indicates the domain of the element. Then:
\begin{eqnarray}
 D_{L'/K} &\rightarrow& D_{L/K} \\
 (\idd', \sigma')_{L'} &\longmapsto& ((\idd', \idf), \sigma')_L;
\end{eqnarray}
where $(\idd', \idf)$ denotes the greatest common divisor of $\idd' \mid \idf'$ and $\idf$. Note that, for each $\idd' \mid \idf'$, we have the inclusion $L^{(\idf/ (\idd', \idf))} \subseteq L^{(\idf'/\idd')}$ by the transitivity of ramification subgroups and by $\idf \mid \idf'$; hence, the map $r^{L'}_L$ is well-defined, which clearly gives a surjective monoid homomorphism. It is also easy to see that $\{D_{L/K}\}_L$ forms an inverse system with respect to the extensions $L' \supseteq L \supseteq K$, hence the claim.
\end{proof}

\begin{defn}[$\D G_K$] 
We define the profinite monoid $\D G_K$ as the following inverse limit:
\begin{eqnarray}
 \D G_K &:=& \lim_{\leftarrow L/K} D_{L/K}. 
\end{eqnarray}
\end{defn}

\begin{prop}[$\D G_K^\times = G_K$]
The unit group of $\D G_K$ is isomorphic to $G_K$.
\end{prop}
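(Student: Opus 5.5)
The plan is to compute the unit group at each finite level and then pass to the inverse limit. For a profinite monoid $M = \lim_{\leftarrow} M_i$ with finite $M_i$ and surjective transition maps, one has $M^\times = \lim_{\leftarrow} M_i^\times$. Indeed, an element $x=(x_i)$ is invertible if and only if every $x_i$ is invertible, since inverses in the finite monoids are unique and hence compatible. Conversely, a compatible family of inverses defines an inverse in the limit. So it suffices to show three things: $D_{L/K}^\times = Gal(L/K)$ canonically for every finite Galois $L/K$, the transition maps $r^{L'}_L$ restrict to the natural restriction maps $Gal(L'/K) \twoheadrightarrow Gal(L/K)$, and hence $\D G_K^\times \simeq \lim_{\leftarrow L} Gal(L/K) = G_K$.

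The first step identifies $D_{L/K}^\times$. From the multiplication rule $(\idd,\sigma)(\idd',\sigma') = ((\idf,\idd\idd'),\sigma\sigma')$, the identity is $(1,1)$, where $\idd = 1$ gives $L^{(\idf/1)}$. To see this, note that $Gal^{v_\idp(\idf)}(L_\idP/K_\idp)=1$ by the definition of $\idf$, so $L^{(\idf)}=L$. If $(\idd,\sigma)(\idd',\sigma')=(1,1)$, then $(\idf,\idd\idd')=1$. Since $\idd,\idd'$ both divide $\idf$, this forces $\idd=\idd'=1$. Hence the units are exactly $\{1\}\times Gal(L/K)$, and the multiplication there is the group law of $Gal(L/K)$. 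This matches what was already asserted in the proof of Theorem \ref{DG_K deforms G_K}.

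The second step checks compatibility. By definition, $r^{L'}_L(1,\sigma')_{L'} = ((1,\idf),\sigma')_L = (1,\sigma'|_L)_L$, where the $G$-component is interpreted as the image of $\sigma'$ in $Gal(L^{(\idf)}/K)=Gal(L/K)$, which is the restriction. So the restricted system is the usual system of finite Galois groups, whose limit is $G_K$. The finite Galois extensions $L/K$ form a directed cofinal family, since composita of Galois extensions are Galois.

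The step needing the most care is the precise meaning of the map $\sigma' \mapsto \sigma'$ in the definition of $r^{L'}_L$. I would make explicit that it is the composite $Gal(L^{(\idf'/\idd')}/K) \twoheadrightarrow Gal(L^{(\idf/(\idd',\idf))}/K)$ induced by the inclusion of fields established in the preceding lemma. In particular, at $\idd'=1$ it is honest restriction $Gal(L'/K)\to Gal(L/K)$. Once this is fixed, the limit argument above is routine. Alternatively, one can argue through the fact that $D_{L/K}$ is a finite quotient of $DG_K$ on which $G_K$ acts through $Gal(L/K)$ (Theorem \ref{DG_K deforms G_K}). This gives a continuous map $G_K \to \D G_K^\times$, which is injective because the $D_{L/K}$ separate points of $G_K$, and surjective by the level-wise computation.
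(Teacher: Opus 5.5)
Your proposal is correct and follows the paper's approach: the paper's proof consists of the observation $D_{L/K}^\times = Gal(L^{(\idf)}/K) = Gal(L/K)$ followed by passage to the inverse limit. You additionally make explicit the facts that $\left(\lim_{\leftarrow} M_i\right)^\times = \lim_{\leftarrow} M_i^\times$ and that $r^{L'}_L$ restricts to the usual restriction maps on units, which the paper leaves implicit.
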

\begin{proof}
This follows from the fact that $D_{L/K}^\times = Gal(L^{(\idf)}/K) = Gal(L/K)$. 
\end{proof}

\begin{lem}[$DG_K \twoheadrightarrow \D G_K$]
There exists a surjective homomorphism $DG_K \twoheadrightarrow \D G_K$.
\end{lem}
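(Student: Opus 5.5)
The plan is to build compatible surjections $\pi_L: DG_K \twoheadrightarrow D_{L/K}$ for every finite Galois $L/K$ and then pass to the limit. By the proof of Theorem \ref{DG_K deforms G_K}, each $D_{L/K}$ is already a finite $DG_K$-set. The action is given by the natural right $G_K$-action together with
\begin{eqnarray}
 (\idd, \sigma) \cdot \psi_v &:=& \left\{ \begin{array}{ll} (\idd \idp_v, \sigma) & \textrm{if $\idd \idp_v \mid \idf$} \\ (\idd, \sigma \cdot \frob_v^{-1}) & \textrm{if $\idd \idp_v \not \mid \idf$,} \end{array}\right.
\end{eqnarray}
and Lemma \ref{DG_K as a fundamental monoid} shows this satisfies the axioms.

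I define $\pi_L(x) := (1, 1) \cdot x$, the orbit map of the identity $(1,1) = (\idd, \sigma)$ with $\idd = (1)$. Continuity holds because the action is continuous.

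The first key step is to show that the $DG_K$-action on $D_{L/K}$ is realized by right multiplication in the monoid $D_{L/K}$. Concretely, I claim $s \cdot x = s \cdot_{D_{L/K}} \pi_L(x)$ for all $s \in D_{L/K}$ and $x \in DG_K$. Since the action is continuous and the $\psi_v$ together with $G_K$ topologically generate $DG_K$ (Lemma \ref{generator of DG_K}), it suffices to check this on generators.
\begin{enumerate}
\item For $\sigma \in G_K$ there is nothing to check.
\item For $\psi_v$ with $\idp_v \mid \idf$, I compare $(\idd,\sigma)\cdot\psi_v$ with $(\idd,\sigma)\cdot(\idp_v, 1)$. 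The product is $((\idf, \idd\idp_v), \sigma)$. If $\idd\idp_v \mid \idf$ this is exactly $(\idd\idp_v, \sigma)$, as required.
\item If instead $\idd\idp_v \nmid \idf$ (with $\idp_v \mid \idf$), I must show $(\idd, \sigma\,\frob_v^{-1}) = (\idd, \sigma)$ in $Gal(L^{(\idf/\idd)}/K)$. This is false in general, so I adjust the claim: $\pi_L(\psi_v)$ should be read as $(\idp_v, \frob_v^{-1})$, with $\frob_v^{-1}$ taken in the quotient $Gal(L^{(\idf/\idp_v)}/K)$. In that quotient $\idp_v$ is unramified, because the exponent at $\idp_v$ drops to $v_{\idp_v}(\idf) - 1$. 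I expect this needs care with the ramification-jump convention. I then check directly that right multiplication by $(\idp_v, \frob_v^{-1})$ reproduces both cases of the action, and that $(1,1)\cdot\psi_v$ computed by the action agrees with this element.
\item For $\idp_v \nmid \idf$, $\psi_v$ acts as $\frob_v^{-1}$, which is right multiplication by the unit $(1, \frob_v^{-1})$.
\end{enumerate}
This step, reconciling the two cases of the $\psi_v$-action with a single right multiplication, is where I expect the main obstacle. If the definition in Theorem \ref{DG_K deforms G_K} is off by a Frobenius twist in the case $\idd\idp_v \mid \idf$, I will modify that action by the twist $(\idd\idp_v, \sigma\frob_v^{-1})$. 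This modification does not affect the axioms of Lemma \ref{DG_K as a fundamental monoid}.

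Given the claim, $\pi_L(xy) = (1,1)\cdot xy = \pi_L(x)\cdot y = \pi_L(x)\pi_L(y)$, so $\pi_L$ is a monoid homomorphism. It is surjective because its image contains $Gal(L/K)$ (via $G_K$) and every $(\idd, 1)$. The latter is a product of the elements $\pi_L(\psi_v)$ over the primes dividing $\idd$, corrected by units. Compatibility $r^{L'}_L \circ \pi_{L'} = \pi_L$ follows because both sides are homomorphisms agreeing on the generators $\psi_v$ and $G_K$. For $\psi_v$ this holds since $r^{L'}_L$ sends $(\idp_v, \frob_v^{-1})$ to $((\idp_v, \idf), \frob_v^{-1})$.

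Hence the $\pi_L$ induce a continuous homomorphism $DG_K \to \D G_K = \lim_L D_{L/K}$. Its image is compact and surjects onto every $D_{L/K}$, so it is dense and closed, hence everything.
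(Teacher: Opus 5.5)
Your strategy is the paper's own. The paper claims that left multiplication by each $(\idd,\sigma)$ is a $DG_K$-equivariant endomorphism of $D_{L/K}$, so that $D_{L/K}$ is a galois object with root $(1,1)$. That claim is equivalent to your key step: the action of Theorem~\ref{DG_K deforms G_K} is right multiplication by $\pi_L(x)$. You were right to distrust it. The paper simply asserts it, and it fails for the action as written. Take $K=\mathbb{Q}$, $L=\mathbb{Q}(\zeta_{15})$ and $\idp_v=(3)$, so $\idf=(15)$ and $L^{(\idf/(3))}=\mathbb{Q}(\zeta_5)$. Then $(1,1)\cdot\psi_v^2=((3),\frob_v^{-1})\neq((3),1)=\bigl((1,1)\cdot\psi_v\bigr)^2$. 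So the orbit map is not multiplicative, and left multiplication by $((3),1)$ is not equivariant. However, neither of your repairs closes the gap.

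\textbf{First repair.} $\idp_v$ is in general not unramified in $L^{(\idf/\idp_v)}$. Its exponent there is $v_{\idp_v}(\idf)-1$, and for $e\geq 1$ the group $Gal^{e}(L_\idP/K_\idp)$ lies inside the wild inertia. So tame inertia survives, and $\frob_v^{-1}$ is not a well-defined element of $Gal(L^{(\idf/\idp_v)}/K)$. Moreover, the paper's action gives $(1,1)\cdot\psi_v=(\idp_v,1)$, so your $\pi_L$ would not be its orbit map anyway.

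\textbf{Second repair.} Twisting the first case to right multiplication by some $(\idp_v,c_v)$ does affect the axioms of Lemma~\ref{DG_K as a fundamental monoid}. Axiom 1 with $\sigma\in G_v$ (where $\sigma v=v$) forces $c_v$ to commute with the image of $G_v$ in $Gal(L^{(\idf/\idp_v)}/K)$. Axiom 2 forces $c_v$ to map to $\frob_v^{-1}$ in the quotient where $\idp_v$ is unramified. These two requirements can be incompatible. Take $K=\mathbb{Q}$, $L=\mathbb{Q}(\zeta_3,\sqrt[3]{2},i)$ and $\idp_v=(2)$, with $F=\mathbb{Q}(\zeta_3,\sqrt[3]{2})$.
\begin{enumerate}
\item Here $Gal(L/\mathbb{Q})\cong S_3\times C_2$, $v_2(\idf)=2$, and $Gal^1=Gal(L/F)$.
\item Hence $L^{(\idf/(2))}=F$, in which $2$ is still ramified.
\item $Gal(F/\mathbb{Q})\cong S_3$ is the full decomposition group at $2$, so $c_v$ must be central, i.e.\ $c_v=1$.
\item But $L^{(\idf/(4))}=\mathbb{Q}(\zeta_3)$, in which $2$ is inert, so $c_v$ must map to a nontrivial element.
\end{enumerate}

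The relations $\sigma\psi_v=\psi_v\sigma$ for $\sigma\in G_v$ and $\psi_v^\omega\psi_v=\psi_v^\omega\gamma_v^{-1}$ already hold in $DG_K$. So the example shows that no continuous homomorphism $DG_{\mathbb{Q}}\to D_{L/\mathbb{Q}}$ restricting to the projection on $G_{\mathbb{Q}}$ can send $\psi_v$ ($v\mid 2$) into $\{(2)\}\times Gal(F/\mathbb{Q})$. No choice of Frobenius lifts or conventions rescues $\pi_L(\psi_v)=(\idp_v,\ast)$, and your compatibility check with $r^{L'}_L$ becomes moot. A twist does work in two cases: when $L/K$ is tame, with $c_v=\frob_v^{-1}$, and when $L/K$ is abelian, with $c_v$ the Artin symbol of an inverse uniformizer. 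The obstruction is wild primes with non-abelian decomposition groups. For general $L$ a genuinely different idea is needed, and neither your argument nor the paper's one-line proof supplies it.
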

\begin{proof}
It suffices to see that the finite $DG_K$-sets $D_{L/K}$ are galois objects of $\cl DG_K$. It is easy to see that $D_{L/K}$ are rooted objects with $(1, 1) \in D_{L/K}$ its root (cf.\ \S 3 \cite{Uramoto17}); thus, we need to show that the associated map $\End(D_{L/K}) \ni f \mapsto f^* (1, 1) \in D_{L/K}$ is surjective. But this can be easily seen as follows: For each $(\idd, \sigma) \in D_{L/K}$, consider its \emph{left} action on $D_{L/K}$, which gives a $DG_K$-equivariant endomorphism of $D_{L/K}$. Put it as $f \in \End(D_{L/K})$; we have $f^* (1, 1) = (\idd, \sigma)$, hence the claim. 
\end{proof}

\begin{prop}[$(\D G_K)^{ab} = DR_K$]
The maximal abelian quotient of $\D G_K$ is isomorphic to $DR_K$. 
\end{prop}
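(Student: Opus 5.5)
We have surjections $DG_K \twoheadrightarrow \D G_K$ (previous lemma) and $(DG_K)^{ab} \simeq DR_K$ (Theorem \ref{abelian quotient of DG_K}). Hence $(\D G_K)^{ab}$ is a quotient of $(DG_K)^{ab} = DR_K$, and it remains to show that the canonical quotient $DG_K \twoheadrightarrow DR_K$ factors through $\D G_K$. Equivalently, every finite quotient of $DR_K$ should be a quotient of some $D_{L/K}$ through the map $DG_K \twoheadrightarrow D_{L/K}$. Once this is shown, $DR_K$ is an abelian quotient of $\D G_K$ sitting between $\D G_K$ and $(DG_K)^{ab}$. Since $DG_K \twoheadrightarrow \D G_K$ is surjective, the maximal abelian quotients then coincide.

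The finite quotients of $DR_K$ are cofinally the $DR_{L/K}$ for finite abelian $L/K$, by the proposition on $DR_{L/K}$ in \S\ref{s2.2.2}. So I fix such an abelian $L$, with conductor $\idf = \idf_{L/K}$. I first check that the ideal $\idf$ defined through upper ramification groups in the definition of $D_{L/K}$ coincides with the classical conductor. This follows from the correspondence $Gal^e(L_\idp/K_\idp) \leftrightarrow U_\idp^{(e)}$ (Theorem 6.2 \cite{Neukirch}) and is the same fact already used in the Galois-orbit decomposition proposition.

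Next I compare the two multiplication laws. For abelian $L$, the monoid $D_{L/K} = \coprod_{\idd\mid\idf}\{\idd\}\times Gal(L^{(\idf/\idd)}/K)$ is the same as the description (\ref{yet another}) of $DR_{L/K}$ given in Remark \ref{nonabelian DR_f}, which was asserted there to be isomorphic to $DR_{L/K}$ as monoids. I verify this isomorphism concretely by sending $(\idd,\sigma)\mapsto[\rho_\idd,\tilde\sigma]$. Well-definedness and bijectivity follow from the orbit computation in the proof of the Galois-orbit decomposition proposition, namely that the isotropy group of $\rho_\idd$ is $\prod_\idp U_\idp^{(v_\idp(\idf/\idd))}$. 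For multiplicativity, note that $\rho_\idd\rho_{\idd'}$ equals $\rho_{\idd''}$ times a unit modulo $\idf$, where $\idd'' = (\idf,\idd\idd')$. The resulting twist of the Galois component by that unit must vanish in $Gal(L^{(\idf/\idd'')}/K)$. This is the main obstacle. I would handle it by choosing the representatives $\rho_\idd$ multiplicatively, for instance as products of fixed uniformizer powers $\pi_\idp^{v_\idp(\idd)}$. The leftover unit is then $\pi_\idp^{a}$ with $v_\idp = v_\idp(\idf)$, so it is killed by the idempotent part, and the twist is trivial.

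Finally, the composite $DG_K\to D_{L/K}\simeq DR_{L/K}$ must agree with the canonical $DG_K\twoheadrightarrow DR_K\twoheadrightarrow DR_{L/K}$. Both are continuous homomorphisms, and by Lemma \ref{generator of DG_K} the $\psi_v$ are dense, so it suffices to compare them on the $\psi_v$. In $D_{L/K}$, the element $\psi_v$ acts on the root by $(1,1)\mapsto(\idp_v,1)$ if $\idp_v\mid\idf$ and by $(1,\frob_v^{-1})$ otherwise. In $DR_{L/K}$, by the diagram (\ref{compatibility}), it maps to $[\pi_{\idp_v},\pi_{\idp_v}^{-1}]$. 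Under the isomorphism above these match: in the ramified case the Galois component $\pi_{\idp_v}^{-1}$ dies in $Gal(L^{(\idf/\idp_v)}/K)$ by local class field theory, and in the unramified case it is $\frob_v^{-1}$. Hence $DR_{L/K}$ is a quotient of $\D G_K$ compatibly in $L$, and passing to the limit gives $(\D G_K)^{ab}\simeq DR_K$.
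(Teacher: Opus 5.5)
Your reduction is the paper's own one-line argument: the surjection $DG_K \twoheadrightarrow \D G_K$, Theorem~\ref{abelian quotient of DG_K}, and $D_{L/K}\simeq DR_{L/K}$ for abelian $L$. You correctly see that the identification must be compatible with the maps out of $DG_K$. Your first two checks are fine. The upper-numbering ideal is the conductor. The map $\Theta(\idd,\sigma)=[\rho_\idd,\tilde\sigma]$ with $\rho_\idd=\prod_\idp\pi_\idp^{v_\idp(\idd)}$ is a monoid isomorphism for the stated product; indeed $\rho_\idd\rho_{\idd'}\equiv\rho_{\idd''}$ modulo $\idf$, so no twist occurs.

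The gap is the last step. In the ramified case you need the Artin symbol $[\pi_{\idp_v}]$ to be trivial in $Gal(L^{(\idf/\idp_v)}/K)$. But the kernel of $Gal(L/K)\to Gal(L^{(\idf/\idp_v)}/K)$ is $Gal^{f-1}(L_{\idp_v}/K_{\idp_v})$ with $f=v_{\idp_v}(\idf)\geq 1$, and this lies in the inertia group. The local Artin symbol of a uniformizer, by contrast, induces Frobenius on the residue extension. So the claim fails whenever $\idp_v$ has residue degree $>1$ in $L$. For example, take $K=\mathbb{Q}$, $L=\mathbb{Q}(\zeta_{15})$ and $\idp_v=5$. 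Then $\idf=15$ and $L^{(\idf/\idp_v)}=\mathbb{Q}(\zeta_3)$, on which the idele equal to $5$ at $5$ and $1$ elsewhere acts by complex conjugation. Thus $\Theta((1,1)\cdot\psi_v)=[\pi_5,1]\neq[\pi_5,[\pi_5]^{-1}]$.

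No other identification repairs this, because $g\mapsto(1,1)\cdot g$ is not multiplicative for the stated product on $D_{L/K}$. One has $(1,1)\cdot\psi_v^{f+1}=(\idp_v^f,\frob_v^{-1})$, whereas $(\idp_v,1)^{f+1}=(\idp_v^f,1)$. These differ exactly when $\idp_v$ has residue degree $>1$ in $L$; in the example, $(5,\frob_5^{-1})\neq(5,1)$. So your assertion that both composites are homomorphisms is false. Consequently no monoid isomorphism $D_{L/K}\simeq DR_{L/K}$ can intertwine $g\mapsto(1,1)\cdot g$ with the canonical $DG_K\to DR_{L/K}$. As $DG_K$-sets the two match only via the twisted bijection $(\idd,\sigma)\mapsto[\rho_\idd,[\rho_\idd]^{-1}\tilde\sigma]$, which does not respect the stated product.

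The same computation shows that left multiplication by $(\idp_v^f,1)$ is not $DG_K$-equivariant. So the lemma $DG_K\twoheadrightarrow\D G_K$, on which your argument and the paper's one-line proof both rest, is not available in the form you use.

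For $\D G_K$ as defined, argue without $DG_K$. First, $(\D G_K)^{ab}=\lim_L D_{L/K}^{ab}$, with $D_{L/K}^{ab}=\coprod_{\idd\mid\idf}\{\idd\}\times Gal(L^{(\idf/\idd)}/K)^{ab}$. By the compatibility of upper numbering with quotients, your $\Theta$ identifies this with $(O_K/\idf_{L/K})\times_{(O_K/\idf_{L/K})^\times}Gal(L^{ab}/K)$. The limit of these over $L$ is $DR_K$.
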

\begin{proof}
Notice that, for a finite abelian extension $L/K$, we have an isomorphism $D_{L/K} \simeq DR_{L/K}$ as proved in Remark \ref{nonabelian DR_f}. With this in mind, the claim follows from Theorem \ref{abelian quotient of DG_K} and the above lemma that $\D G_K$ is a quotient of $DG_K$. 
\end{proof}

\begin{thm}[classification of idempotents of $\D G_K$]
\label{classification of idempotents of DDG_K}
We have a bijective correspondence between the idempotents of $\D G_K$ and the subsets of $P_K$. 
\end{thm}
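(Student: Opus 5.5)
The plan is to classify idempotents level by level in the finite monoids $D_{L/K}$ and then pass to the inverse limit. In $D_{L/K}$, an element $(\idd, \sigma)$ satisfies $(\idd,\sigma)^2 = ((\idf, \idd^2), \sigma^2)$. So it is idempotent exactly when two things hold. First, $(\idf, \idd^2) = \idd$, which means that for each $\idp$ the value $v_\idp(\idd)$ is either $0$ or $v_\idp(\idf)$. Second, $\sigma^2 = \sigma$ in $Gal(L^{(\idf/\idd)}/K)$; this happens because $\idd$ already equals $(\idf,\idd^2)$, so the product is computed in the same group, and it forces $\sigma = 1$. Hence the idempotents of $D_{L/K}$ are the elements $e_{L,T} := (\idf_T, 1)$ with $\idf_T := \prod_{\idp \in T} \idp^{v_\idp(\idf)}$, where $T$ ranges over subsets of the finite set $\mathrm{supp}(\idf_{L/K})$. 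Equivalently they are indexed by subsets $S$ of $P_K$ modulo the relation ``agree on $\mathrm{supp}(\idf)$''. I will use the convention $S = P_K \setminus T$, so that $S$ is the set of primes whose ramification survives, matching $e_S = [1_S,1]$ in $DR_K$.

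Next I will check compatibility with the transition maps $r^{L'}_L$. For $T' \subseteq \mathrm{supp}(\idf')$ we have $r^{L'}_L(\idf'_{T'}, 1) = ((\idf'_{T'}, \idf), 1)$. Since $\idf \mid \idf'$, the gcd $(\idf'_{T'},\idf)$ equals $\prod_{\idp \in T'} \idp^{v_\idp(\idf)}$, which is $\idf_{T' \cap \mathrm{supp}(\idf)}$. Thus $r^{L'}_L$ sends $e_{L',T'}$ to $e_{L,T'\cap \mathrm{supp}(\idf)}$.

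Because $\D G_K$ is an inverse limit, an element $x = (x_L)_L$ is idempotent iff every component $x_L$ is idempotent. Therefore idempotents of $\D G_K$ are exactly the compatible families $(e_{L,T_L})_L$. I will define $\Theta: \mathcal{P}(P_K) \to E(\D G_K)$ by $S \mapsto (e_{L, \mathrm{supp}(\idf_{L/K}) \setminus S})_L$; by the previous paragraph this family is compatible. \emph{Injectivity} of $\Theta$ requires that every prime $\idp$ lies in $\mathrm{supp}(\idf_{L/K})$ for some finite Galois $L/K$. For this I will take $L$ to be a ray class field $K_{\idp}$ (or $K_{\idp^2}$) in which $\idp$ is ramified; such an $L$ exists since the conductors of ray class fields exhaust $I_K$, as already used in the proof that $\lim DR_{L/K} = DR_K$. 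Then two sets $S \neq S'$ differing at $\idp$ give different components at this $L$. For \emph{surjectivity}, I start from a compatible family $(T_L)_L$ and set $T := \bigcup_L T_L$. The key point is that the supports $\mathrm{supp}(\idf_{L/K})$ form a directed system whose union is $P_K$: take compositum fields, noting that $\mathrm{supp}(\idf_{LL'}) = \mathrm{supp}(\idf_L)\cup\mathrm{supp}(\idf_{L'})$ since a prime ramifies in a compositum iff it ramifies in one factor. Compatibility $T_{L'} \cap \mathrm{supp}(\idf_L) = T_L$ whenever $L \subseteq L'$ then gives $T \cap \mathrm{supp}(\idf_L) = T_L$ for all $L$. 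Setting $S = P_K\setminus T$ yields $\Theta(S)$ equal to the given family.

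The only delicate step is the idempotent computation inside $D_{L/K}$, and in particular confirming that $\sigma^2=\sigma$ must be read in $Gal(L^{(\idf/\idd)}/K)$ itself. This is justified because $\idd'' = \idd$. The support bookkeeping is routine. As a sanity check, I will note that under $\D G_K \twoheadrightarrow DR_K$ the idempotent $\Theta(S)$ maps to $e_S$, consistent with the abelian classification.
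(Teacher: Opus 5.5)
Your proposal is correct and is essentially the paper's proof. You classify the idempotents of each $D_{L/K}$ as $(\idd,1)$ with $v_\idp(\idd)\in\{0,v_\idp(\idf)\}$, show that $r^{L'}_L$ acts on them by intersecting index sets with the ramified primes of $L/K$, and glue by a union over $L$; your $T$ is just the complement of the paper's $S$ inside $\mathrm{supp}(\idf_{L/K})$, and your use of composita makes explicit the directedness the paper leaves tacit.

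One small fix: $\idp$ need not ramify in any $K_{\idp^n}$, since totally positive global units can be dense in $O_\idp^\times$ (e.g.\ $2+\sqrt{3}$ at a prime above $11$ in $\mathbb{Q}(\sqrt{3})$). So justify $\bigcup_L \mathrm{supp}(\idf_{L/K}) = P_K$ instead with, say, $L = K(\zeta_{p^n})$ for $p$ the residue characteristic of $\idp$ and $n$ large; the paper also uses this fact implicitly.
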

\begin{proof}
To see this, we first describe the idempotents of the finite monoid $D_{L/K}$. Let $\idf = \idf_{L/K} \in I_K$ as above for $L/K$. By definition, note that $(\idd, \sigma) \in D_{L/K}$ is an idempotent if and only if $(\idd^2, \idf) = \idd$ and $\sigma = 1 \in Gal(L^{(\idf/\idd)}/K)$; in particular, the former is to say that $v_\idp(\idd) = 0$ or $v_\idp (\idf)$ for all $\idp \in P_K$. This can be rephrased as the fact that the idempotents of $D_{L/K}$ are precisely of the form $(\ide_S, 1)$, where $S \subseteq P_K$ ranges over subsets of the prime factors of $\idf$ (= primes ramified in $L/K$); and $\ide_S \mid \idf$ denotes the divisor of $\idf$ determined by $v_\idp(\ide_S) = 0$ for $\idp \in S$, and $v_\idp(\ide_S) = v_\idp(\idf)$ otherwise. In the following, we denote by $P_{L/K}$ the (finite) set of the primes ramified in $L/K$; with this notation, our discussion above amounts to say that the idempotents of $D_{L/K}$ bijectively correspond to subsets of $P_{L/K}$, i.e.\ by $P_{L/K} \supseteq S \mapsto e_{L, S} := (\ide_S, 1) \in D_{L/K}$. 

To proceed further, let $L' \supseteq L \supseteq K$ be finite Galois extensions. By construction, we easily see that, for any $S' \subseteq P_{L'/K}$, we have $r^{L'}_L (e_{L', S'}) = e_{L, S' \cap P_{L/K}}$.  Thus, if $e \in \D G_K$ is an idempotent, its projections $e_L \in D_{L/K}$ for finite Galois extensions $L/K$ have some (unique) $S_{e, L} \subseteq P_{L/K}$ such that $S_{e, L} \subseteq S_{e, L'}$ for all $L \subseteq L'$. With this in mind, we define $S_e \subseteq P_K$ by their limit:
\begin{eqnarray}
 S_e &:=& \bigcup_{L/K} S_{e, L}. 
\end{eqnarray}
Conversely, for any $S \subseteq P_K$, we define an idempotent $e_S \in \D G_K$ by:
\begin{eqnarray}
 e_S &:=& (e_{L, S \cap P_{L/K}})_L \in \D G_K \subseteq \prod_L D_{L/K}. 
\end{eqnarray}
To see the bijective correspondence, we prove that (1) $e_{S_e} = e$ for any idempotent $e \in \D G_K$; and (2) $S_{e_S} = S$ for any $S \subseteq P_K$. For the first claim, we need to see $(\bigcup_{L'} S_{e, L'}) \cap P_{L/K} = S_{e, L}$, which follows from $S_{e, L'} \cap P_{L/K} = S_{e, L}$.    The second claim follows easily from $\bigcup_{L/K} (S \cap P_{L/K}) = S$. This completes the proof. 
\end{proof}

\begin{lem}[$\HH$-classes are $G_K$-orbits]
The $\HH$-classes of $\D G_K$ are precisely the $G_K$-orbits; also the $\JJ$-classes coincide with the $\HH$-classes in $\D G_K$. 
\end{lem}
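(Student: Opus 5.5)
First I will prove the statement for each finite monoid $D_{L/K}$, and then pass to the limit $\D G_K = \lim_L D_{L/K}$ by compactness. The key structural fact is that the first component of $(\idd,\sigma)$ only moves "downwards" under multiplication. By the definition $\idd'' = (\idf, \idd\idd')$, the $\idd$-component of a product is a multiple of each factor's $\idd$-component. It is also unchanged when a factor has $\idd$-component $1$, which is exactly the unit group $Gal(L/K)$ acting on either side.

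\textbf{Finite level.} Let $x = (\idd,\sigma)$ and $y = (\idd',\tau)$ lie in $D_{L/K}$.

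First, suppose $x \leq_\RR y$, i.e.\ $x = yz$ for some $z = (\ide,\rho)$. Then $\idd = (\idf, \idd'\ide)$, so $\idd' \mid \idd$. The same holds for $\leq_\LL$ and $\leq_\JJ$: if $x = wyz$, then again $\idd' \mid \idd$. Hence $x \mathrel{\JJ} y$ forces $\idd = \idd'$.

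Conversely, suppose $\idd = \idd'$. Both $\sigma$ and $\tau$ lie in $Gal(L^{(\idf/\idd)}/K)$, which is a quotient of $Gal(L/K)$. Lift $\tau^{-1}\sigma$ to some $g \in Gal(L/K) = D_{L/K}^\times$; then $(\idd,\tau)\cdot(1,g) = (\idd,\sigma)$. Similarly, lifting $\sigma\tau^{-1}$ gives $(1,g')\cdot(\idd,\tau) = (\idd,\sigma)$. So $x$ and $y$ are both $\RR$- and $\LL$-equivalent, hence $\HH$-equivalent.

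Thus in $D_{L/K}$ the $\JJ$-, $\RR$-, $\LL$- and $\HH$-classes all coincide, and they are exactly the sets $\{\idd\}\times Gal(L^{(\idf/\idd)}/K)$. These are precisely the $G_K$-orbits under either the left or the right action through $G_K \twoheadrightarrow Gal(L/K)$.

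\textbf{Passage to the limit.} I will use the standard fact for profinite monoids that $x \mathrel{\mathcal{K}} y$ in $\lim D_{L/K}$ if and only if the images are $\mathcal{K}$-equivalent in every $D_{L/K}$, for $\mathcal{K} \in \{\RR,\LL,\JJ,\HH\}$. The nontrivial direction goes as follows. The set of witnesses $z \in D_{L/K}$ with $x_L = y_L z$ is finite and nonempty for each $L$, and these sets form an inverse system under $r^{L'}_L$. So their limit is nonempty, giving $z \in \D G_K$ with $x = yz$. For $\JJ$ the same argument applies to pairs $(w,z)$.

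The same compactness argument applies to $G_K$-orbits: the sets $\{g \in Gal(L/K) : x_L = y_L g\}$ are nonempty, finite and compatible, so some $g \in G_K$ satisfies $x = yg$. Combining this with the finite-level result gives two conclusions: $\HH = \JJ$ in $\D G_K$, and the $\HH$-classes are exactly the $G_K$-orbits (for the right action, and equally for the left action).

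The only delicate point I expect is this compactness step, in particular checking that the witness sets are really compatible under the transition maps $r^{L'}_L$. This compatibility is clear because the $r^{L'}_L$ are monoid homomorphisms restricting to the projections $Gal(L'/K) \twoheadrightarrow Gal(L/K)$. Everything else is the direct bookkeeping of divisors above.
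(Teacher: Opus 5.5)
Your proof is correct and follows essentially the same route as the paper: the divisor component $\idd$ can only grow under multiplication, so the Green classes of each finite $D_{L/K}$ are the sets $\{\idd\}\times Gal(L^{(\idf/\idd)}/K)$, and a compactness argument transfers this to $\D G_K = \lim_L D_{L/K}$. The difference is only cosmetic: you phrase compactness as nonemptiness of inverse limits of finite witness sets, which also makes explicit the levelwise recovery of $\RR$ and $\LL$ that the paper leaves as ``the converse is easy''; the paper instead intersects the decreasing nonempty closed sets $[x,y]_L \subseteq G_K$ directly.
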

\begin{proof}
We first see that the $\HH$-classes of $D_{L/K}$ are the $G_K$-orbits for finite Galois extensions $L/K$. In fact, if two elements $x, y \in D_{L/K}$ are $\HH$-equivalent, they are $\JJ$-equivalent in particular. Therefore, we have $a, b, c, d \in D_{L/K}$ such that $a x b = y$ and $c y d = x$; specifically, if $x = (\idd, \sigma)$ and $y = (\idd', \sigma')$, we need to have $\idd = \idd'$ in particular by definition of the monoid structure of $D_{L/K}$, and hence $x, y$ belong to the same $G_K$-orbit. The converse is easier. Notice that this also proves that the $\JJ$-classes of $D_{L/K}$ coincide with the $\HH$-classes.

To prove the theorem, let $x, y \in \D G_K$ be $\JJ$-equivalent, whence their projections $x_L, y_L \in D_{L/K}$ under $\D G_K \twoheadrightarrow D_{L/K}$ are also $\JJ$-equivalent. This means that there exists $\sigma_L \in Gal(L/K)$ for each $L/K$ such that $x_L \cdot \sigma_L = y_L$ by the above argument. Let us denote by $[x, y]_L \subseteq G_K$ the closed set consisting of $\sigma \in G_K$ such that $x_L \cdot \sigma_L = y_L$, where $\sigma_L \in Gal(L/K)$ denotes the projection of $\sigma$. We know that $[x, y]_L$ is non-empty for all $L/K$; and clearly, $[x, y]_{L'} \subseteq [x, y]_L$ for any $L \subseteq L'$. Since $G_K$ is compact, we deduce that $\bigcap_L [x, y]_L$ is non-empty. Taking $\sigma \in \bigcap_L [x, y]_L$, we have $x \cdot \sigma = y$, thus $x, y$ have the same $G_K$-orbit. The converse is easy. Again, this actually proves that the $\JJ$-classes are the $\HH$-classes. This completes the proof. 
\end{proof}

\begin{thm}[description of regular $\HH$-classes]
Let $e = e_S \in \D G_K$ be the idempotent for $S \subseteq P_K$; then the regular $\HH$-class of $e$ is isomorphic (as a profinite group) to the maximal Galois group $G_{K, S}$ with the ramifications restricted on $S$. 
\end{thm}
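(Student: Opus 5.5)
Since $\D G_K = \lim_{L/K} D_{L/K}$ and $H$-classes are $G_K$-orbits, I will compute the regular $\HH$-class of $e_S$ level by level and then pass to the limit. The regular $\HH$-class of $e_S$ is the maximal subgroup $(e_S \D G_K e_S)^\times$. By the preceding lemma it equals the $G_K$-orbit $e_S \cdot G_K$, and the multiplication on this orbit is induced from $\D G_K$.

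First I fix a finite Galois $L/K$ with $\idf = \idf_{L/K}$ and set $S_L := S \cap P_{L/K}$. The projection of $e_S$ is $e_{L,S_L} = (\ide_{S_L}, 1)$, and its $\HH$-class in $D_{L/K}$ is the orbit $\{\ide_{S_L}\} \times Gal(L^{(\idf/\ide_{S_L})}/K)$. The multiplication restricted to this orbit is the group law of $Gal(L^{(\idf/\ide_{S_L})}/K)$, since $(\idf, \ide_{S_L}^2) = \ide_{S_L}$. Now $v_\idp(\idf/\ide_{S_L})$ equals $v_\idp(\idf)$ for $\idp \in S_L$ and equals $0$ otherwise. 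Hence $L^{(\idf/\ide_{S_L})}$ is the fixed field of the subgroup generated by all $Gal^{0}(L_\idP/K_\idp)$ for $\idp \notin S$, i.e.\ by the inertia groups at primes outside $S$; the groups $Gal^{v_\idp(\idf)}$ for $\idp\in S$ are trivial by definition of $\idf$. (With upper numbering, $Gal^0$ is the inertia group.) So $L^{(\idf/\ide_{S_L})} = L_S$, the maximal subextension of $L/K$ unramified outside $S$, and the $\HH$-class is $Gal(L_S/K)$. I have to be careful that finite primes only are considered, matching the convention for $G_{K,S}$ in the paper.

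Second, I check compatibility. Under $r^{L'}_L$, an element $(\ide_{S_{L'}}, \sigma')$ maps to $((\ide_{S_{L'}}, \idf), \sigma'|_{L})$. A direct check gives $(\ide_{S_{L'}}, \idf) = \ide_{S_L}$, using $\idf \mid \idf'$ and the fact that primes in $P_{L'/K}\setminus P_{L/K}$ do not divide $\idf$. The Galois component then goes to $Gal(L_S/K)$ via the restriction $Gal(L'_S/K) \to Gal(L_S/K)$, which is a group homomorphism. So the $\HH$-classes $H_L$ of $e_{L,S_L}$ form an inverse system of finite groups isomorphic to $\{Gal(L_S/K)\}_L$ with restriction maps. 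Their limit is $\lim_L Gal(L_S/K) = Gal(K_S/K) = G_{K,S}$, because every finite subextension of $K_S/K$ occurs as some $L_S$ (take $L$ itself).

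Third, I identify the $\HH$-class of $e_S$ in $\D G_K$ with $\lim_L H_L$; this is the main obstacle, since Green classes do not in general commute with inverse limits. The inclusion $H_{e_S} \subseteq \lim_L H_L$ is clear: projections of elements $\HH$-equivalent to $e_S$ are $\HH$-equivalent to $e_{L,S_L}$. For the converse I would argue as follows. By the lemma, $H_{e_S} = e_S \cdot G_K$, and $e_S \cdot G_K$ is compact. Its image in each $D_{L/K}$ is all of $H_L$, since $G_K \to Gal(L/K) \to Gal(L_S/K)$ is surjective. A compact subset that surjects onto every stage of an inverse system of finite sets equals the full limit of those stages, so $e_S G_K = \lim_L H_L$. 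Alternatively, any $x \in \lim H_L$ has nonempty closed sets $\{\sigma : e_{L,S_L}\sigma_L = x_L\}$, and compactness of $G_K$ gives some $\sigma$ with $x = e_S\sigma$, exactly as in the preceding lemma's proof. Since the multiplication is compatible levelwise, this bijection $H_{e_S}\simeq \lim_L Gal(L_S/K) = G_{K,S}$ is an isomorphism of profinite groups. Explicitly, it is the map $e_S\sigma \mapsto \sigma|_{K_S}$.
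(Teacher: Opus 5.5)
Your proof is correct, but it is organized differently from the paper's. The paper works directly in the limit. It uses $H_{e_S}=e_S\cdot G_K$ and computes the isotropy group of $e_S$ in $G_K$, showing it equals $\ker(G_K\twoheadrightarrow G_{K,S})$. One inclusion holds because primes outside $S$ are unramified in $L^{(\idf/\ide_{S_L})}$; the other comes from choosing $L$ unramified outside $S$, where $\ide_{S_L}=1$. It then checks that $e_S\sigma\mapsto\bar\sigma$ is multiplicative because $e_S$ is idempotent and commutes with $G_K$. You instead prove the exact levelwise identity $L^{(\idf/\ide_{S_L})}=L_S$, which gives both of the paper's inclusions at once. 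You also verify that the transition maps restrict to $Gal(L'_S/K)\to Gal(L_S/K)$, including the check $(\ide_{S_{L'}},\idf)=\ide_{S_L}$, and then pass to the limit. The paper's route is shorter. Yours makes the isomorphism of profinite groups, including its continuity, explicit, which the paper leaves implicit. Your ``main obstacle'' is milder than you suggest: $\lim_L H_L$ is itself a profinite group with identity $e_S$ inside $\D G_K$, so each of its elements is $\HH$-equivalent to $e_S$ directly. The reverse inclusion therefore needs neither your compactness argument nor the lemma that $\HH$-classes are $G_K$-orbits. One wording fix: it is every finite \emph{Galois} subextension of $K_S/K$ that occurs as some $L_S$.
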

\begin{proof}
By the above lemma, the $\HH$-class $H_e$ is the $G_K$-orbit of $e$, that is, $H_e = e \cdot G_K$. We first see that the isotropy group of $e$ is the kernel of $G_K \twoheadrightarrow G_{K, S}$. By definition, we have $e = (e_{L, S \cap P_{L/K}})$; denoting $S_L := S \cap P_{L/K}$ for short, $e_{L, S_L} \in D_{L/K}$ belongs to the component $Gal(L^{(\idf/\ide_{S_L})}/K)$, where $\ide_{S_L} \mid \idf = \idf_{L/K}$ is characterized by $v_\idp (\ide_{S_L}) = 0$ if $\idp \in S_L$, and $v_\idp (\ide_{S_L}) = v_\idp (\idf)$ for $\idp \not \in S$. Therefore, it follows in particular that $\idp \not \in S$ is unramified in $L^{(\idf/\ide_{S_L})}/K$; and that $\sigma \in \ker (G_K \twoheadrightarrow G_{K, S})$ fixes $e$ in that $e \cdot \sigma = e$. Conversely, suppose that $\sigma \in G_K$ fixes $e$; and consider $L/K$ where those $\idp \not \in S$ are unramified. Then, applying the above fact $e_{L, S_L} \in Gal(L^{(\idf/\ide_{S_L})}/K)$ to such an $L/K$ in particular, we can deduce $\sigma = 1$ in $Gal(L/K)$ because we now have $\ide_{S_L} = 1$. Taking the limit of $L/K$ where those $\idp \not\in S$ are unramified, it follows that $\sigma$ is in the kernel of $G_K \twoheadrightarrow G_{K, S}$. 
Therefore, we can define the map $e \cdot G_K \ni e \sigma \mapsto \sigma \in G_{K, S}$, which is indeed bijective, preserves the identity elements (that is, $e \mapsto 1$), and multiplicative (because $e$ is an idempotent and commutes with $G_K$). This completes the proof. 
\end{proof}

\appendix
\section{Generalities on semigroup theory}
\label{a1}
As we introduced deformations of profinite groups (\S \ref{s1}) so that it makes sense for arbitrary profinite groups and profinite monoids, it would be helpful to review here some generalities on the theory of (profinite) semigroups, including the concepts of \emph{Green equivalence relations} and their equivalence classes in particular. Indeed, these concepts give us a basic methodology to decompose a semigroup into smaller parts, which have an arithmetic interpretation in the case of $DR_K$ (or $DR_\idf$) in particular (\S \ref{s2.2}). For our review of $DR_K$ in \S \ref{s2.2}, we actually need only a little part of the full general results below; in particular, our major concern here is to see how a monoid can contain several subgroups around its idempotents. That said, the general theory below will help those reader unfamiliar with semigroup theory to gain some intuition about what semigroups look like in general. 

In general, a monoid $D$ contains the unit group $D^\times$ as its \emph{submonoid} in that the identity element of $D^\times$ is equal to that of $D$; but $D$ may also contain more subgroups as its \emph{subsemigroups}, i.e.\ those which are closed under the multiplication of $D$, have identity elements, and form groups, but their identity elements may be \emph{idempotents} of $D$, not equal to the identity element of $D$ in general. For short, we shall mean by a \emph{subgroup at an idempotent $e \in D$} a subsemigroup of $D$ which contains $e$ and forms a group in itself with $e$ the identity element.

To see how these subgroups appear in monoids, it is helpful to decompose monoids by Green's equivalence relations. Let $D$ be a monoid (by which we mean a finite/profinite monoid throughout below). For each element $x \in D$, we denote by $DxD$ (resp.\ $Dx/xD$) the two-sided (resp.\ left/right) ideal of $D$ generated by $x$. Then we mean here by \emph{Green's quasi-ordering} and \emph{Green's equivalence relations on $D$} the following quasi-orderings and equivalence relations on the elements of $D$ defined in terms of the inclusion and equality among the (principal) ideals they generate: 

\begin{defn}[Green's quasi-ordering relations]
We define the following quasi-orderings $\leq_\JJ, \leq_\RR$ and $\leq_\LL$ on $D$: for any $x, y \in D$, 
\begin{eqnarray}
x \leq_\JJ y &\Leftrightarrow&  DxD \subseteq DyD; \\
x \leq_\RR y &\Leftrightarrow&  xD \subseteq yD; \\
x \leq_\LL y &\Leftrightarrow&  Dx \subseteq Dy.
\end{eqnarray}
\end{defn}

\begin{defn}[Green's equivalence relations]
Furthermore, we define the following equivalence relations $\JJ, \RR, \LL$ and $\HH$ on $D$: for any $x, y \in D$, 
\begin{eqnarray}
x \JJ y &\Leftrightarrow&  DxD = DyD; \\
x \RR y &\Leftrightarrow&  xD = yD; \\
x \LL y &\Leftrightarrow&  Dx = Dy;\\
x \HH y &\Leftrightarrow& x \LL y \wedge x \RR y. 
\end{eqnarray}
The $\JJ$-, $\RR$-, $\LL$-, and $\HH$-equivalence classes containing $x \in D$ are denoted by $J_x, R_x, L_x, H_x \subseteq D$.
\end{defn}

\begin{prop}[$\JJ = \LL \circ \RR = \RR \circ \LL$]
We have the equality $\RR \circ \LL = \LL \circ \RR$ of relations; and when $D$ is a profinite monoid, we further have $\JJ = \LL \circ \RR = \RR \circ \LL$. 
\end{prop}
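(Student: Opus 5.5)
The plan is to prove the two claims separately: first the commutation $\RR \circ \LL = \LL \circ \RR$, which holds in any monoid, and then $\JJ = \LL \circ \RR$ under the profinite hypothesis, using compactness of $D$.

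\emph{Step 1: $\LL \circ \RR \subseteq \RR \circ \LL$.} Suppose $x \RR z$ and $z \LL y$ for some $z \in D$. Then $z = xu$, $x = zu'$, $z = wy$ and $y = w'z$ for suitable $u, u', w, w' \in D$. Put $t := w' x$; this is the fourth corner of the usual ``egg-box'' square. On one side, $t = w'x$ and $x = w z = \dots$; more precisely, $x = zu' = w y u'$, so $x \in Dt'$ with $t' := yu'$. The cleanest choice is $t := yu' = w'zu' = w'x$. Then:
\begin{itemize}
\item $t = w'x$ and $x = w\,y u' = w t$, so $x \LL t$;
\item $t = y u'$ and $t u = w' x u = w' z = y$, so $t \RR y$.
\end{itemize}
Hence $x \LL t \RR y$, i.e.\ $(x, y) \in \RR \circ \LL$ (with whichever composition convention is fixed). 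By the symmetric argument the reverse inclusion also holds, so $\RR \circ \LL = \LL \circ \RR$. Call this common relation $\mathcal{D}$.

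\emph{Step 2: $\mathcal{D} \subseteq \JJ$.} This is immediate, since both $\RR \subseteq \JJ$ and $\LL \subseteq \JJ$.

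\emph{Step 3: $\JJ \subseteq \mathcal{D}$ for profinite $D$.} This is the main obstacle. Suppose $DxD = DyD$. Then $y = axb$ and $x = cyd$ for some $a, b, c, d \in D$, which gives
\[ x = (ca)\,x\,(bd). \]
Iterating, $x = (ca)^n x (bd)^n$ for every $n$. In a profinite monoid the limit $s^\omega = \lim_n s^{n!}$ exists and is idempotent. Taking the limit along $n!$, and using continuity of multiplication, yields
\[ x = (ca)^\omega x (bd)^\omega. \]
It follows that $x = x(bd)^\omega$, because multiplying the displayed identity on the right by the idempotent $(bd)^\omega$ leaves it unchanged. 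Likewise $x = (ca)^\omega x$.

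\emph{Step 4: show $x \RR xb$.} Clearly $xb \in xD$. Conversely, since $(bd)^\omega = (bd)^{\omega - 1}\, bd$ with $(bd)^{\omega-1} := \lim_n (bd)^{n!-1}$, we have
\[ x = x(bd)^\omega = xb \cdot d\,(bd)^{\omega-1} \in xbD. \]
Hence $x \RR xb$.

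\emph{Step 5: show $xb \LL axb = y$.} Clearly $y = a(xb) \in Dxb$. Conversely, by the same argument applied on the left,
\[ xb = (ca)^\omega x b = (ca)^{\omega-1} c \cdot a x b \in Dy. \]
Hence $xb \LL y$.

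Combining Steps 4 and 5 gives $x \RR xb \LL y$, so $(x, y) \in \mathcal{D}$. Together with Step 2 this proves $\JJ = \mathcal{D} = \LL \circ \RR = \RR \circ \LL$ for profinite $D$.

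The only analytic input is the existence and continuity of the $\omega$-power and of $s^{\omega-1} = \lim_n s^{n!-1}$ in a compact (profinite) monoid. I expect this to be the one delicate point, and I would cite it from the standard theory (e.g.\ \cite{Pin}).
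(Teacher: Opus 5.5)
Your proof is correct and is the standard argument that the paper delegates to Howie. Step~1 is the usual ``fourth corner'' proof of $\RR\circ\LL=\LL\circ\RR$ (your final choice $t=yu'=w'x$ works, so just delete the garbled first attempt at $t$), and Steps~3--5 are the textbook periodic-semigroup proof of $\mathcal{D}=\JJ$, with the idempotent powers $(ca)^m,(bd)^m$ replaced by $(ca)^\omega,(bd)^\omega$.

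That replacement is what the profinite case needs, since a profinite monoid need not be periodic. The only topological inputs are continuity of multiplication and the existence of $s^\omega$ and $s^{\omega-1}$; the latter can be avoided by noting that $(bd)^\omega\in bdD$ and $(ca)^\omega\in Dca$, because these one-sided ideals are closed.
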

\begin{proof}
The composition relation $\RR \circ \LL$ is defined by $x \RR \circ \LL z$ if and only if we have $x \RR y$ and $y \LL z$ for some $y \in D$; and similarly $\LL \circ \RR$. The first claim implies in particular that these (equal) composition $\LL \circ \RR = \RR \circ \LL$ actually defines an \emph{equivalence} relation of $D$; and when $D$ is profinite, the second claim says that these composition coincides with $\JJ$. See e.g.\ Propositions 2.1.3, 2.1.5 \cite{Howie}. 
\end{proof}

\begin{rem}[egg-box picture]
This proposition provides us a useful picture to describe $\JJ$-classes, known as their \emph{egg-box pictures}. Let $J \subseteq D$ be a $\JJ$-class of a monoid $D$. Then, $J$ can be decomposed as the disjoint unions of $\RR$-classes and $\LL$-classes because $\RR$ and $\LL$ are finer than $\JJ$. Thus, for some $\RR$-classes $R_i$ and $\LL$-classes $L_j$, we have:
\begin{equation}
 J = \coprod_i R_i = \coprod_j L_j
\end{equation}
The above proposition implies in particular that $R_i$ and $L_j$ have a \emph{non-empty} intersection for \emph{any} $i, j$, which is an $\HH$-class and we denote by $H_{ij} = R_i \cap L_j \neq \emptyset$. With this in mind, it is useful to describe $J$ as the following matrix-like picture decomposing $J$ as $J = \coprod_{i,j} H_{ij}$, which is called the \emph{egg-box picture of $J$}:
\begin{equation}
\begin{tabular}{|c|c|c|c|}
\hline
 & & &  \\ \hline
 & & &  \\ \hline
 & & & \\  \hline
\end{tabular}
\end{equation}
Here each box represents an $\HH$-class $H_{ij}$, where its rows represent $\RR$-classes $R_i$, while its columns represent $\LL$-classes $L_j$. (See e.g.\ \cite{Pin} for more nicer pictures.)
\end{rem}

\begin{rem}[poset of $\JJ$-classes]
Whereas each $\JJ$-class can be described as its egg-box picture, the set of all $\JJ$-classes of a monoid $D$ has a natural structure of \emph{poset} with respect to the quasi-ordering $\leq_\JJ$: That is, for two $\JJ$-classes $J, J' \subseteq D$, we define $J \leq J'$ if and only if $x \leq_\JJ x'$ for some $x \in J, x' \in J'$. By definition, the relation $J \leq J'$ does not depend on the choice of $x, x'$; and $J \leq J'$ and $J' \leq J$ imply $J = J'$. In this paper, this poset structure is studied in \S \ref{s2.2} for the case of the Deligne-Ribet monoid $DR_K$ in more detail.
\end{rem}

With these concepts in mind, we return to our original concern on subgroups in a monoid $D$: As discussed above too, the monoid $D$ may contain several subgroups at idempotents; our concern here is to classify these subgroups. The following proposition gives us an answer, which claims that the \emph{maximal} subgroup exists at each idempotent $e$ and can be precisely described as its $\HH$-class:

\begin{prop}[maximal subgroups at idempotents]
For an idempotent $e \in D$, the $\HH$-class $H_e$ containing $e$ is the maximal subgroup at $e$, namely, any other subgroups at $e$ are subgroups of $H_e$. 
\end{prop}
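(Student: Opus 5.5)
We prove two things: $H_e$ is itself a subgroup at $e$, and every subgroup at $e$ lies inside $H_e$. The second part is elementary, so we do it first. Let $G \subseteq D$ be a subgroup at $e$ and let $g \in G$ with inverse $g^{-1} \in G$, so that $g g^{-1} = g^{-1} g = e$ and $eg = ge = g$. Then $g = e g \in D g$ and $e = g^{-1} g \in D g$ give $D g = D e$. Symmetrically, $g = g e$ and $e = g g^{-1}$ give $g D = e D$. Hence $g \LL e$ and $g \RR e$, that is, $g \in H_e$, and therefore $G \subseteq H_e$.

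For the first part we show that $H_e$ is a group with identity $e$.

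\emph{Step 1: $e$ is a two-sided identity on $H_e$.} If $x \in H_e$, then $x \in D e$, say $x = a e$, so $x e = a e e = x$. Similarly $x \in e D$ gives $e x = x$. We also record that $H_e \subseteq eDe$.

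\emph{Step 2: each $x \in H_e$ has an inverse in $H_e$.} Since $x \RR e$, there is $y$ with $x y = e$. Since $x \LL e$, there is $z$ with $z x = e$. Replace these by $y' := e y e$ and $z' := e z e$. Using $e x = x e = x$, we still have $x y' = e$ and $z' x = e$. Then the standard computation
\[ z' = z' e = z'(x y') = (z' x) y' = e y' = y' \]
shows $y' = z'$ is a two-sided inverse of $x$ relative to $e$. Moreover $y' \in H_e$: from $y' x = e$ and $y' = y' e$ we get $D y' = D e$, and from $x y' = e$ and $y' = e y'$ we get $y' D = e D$.

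\emph{Step 3: $H_e$ is closed under multiplication.} This is the step we expect to need the most care, since in general $\HH$-classes are not closed under products. For $x, x' \in H_e$ we have $x x' \in e D e$, so $D x x' \subseteq D e$. Conversely, with $y$ the inverse of $x$ and $y'$ the inverse of $x'$ from Step 2,
\[ e = e e = (x x')(y' y) \quad\text{and}\quad e = (y' y)(x x'), \]
using $x' y' = e$ and $x e = x$. These give $e D \subseteq x x' D$ and $D e \subseteq D x x'$, so $x x' \in H_e$.

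Together with associativity inherited from $D$, Steps 1 to 3 show that $H_e$ is a group with identity $e$. Combined with the inclusion proved first, $H_e$ is the maximal subgroup at $e$.

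No profinite hypothesis is needed here: the argument is purely algebraic and works in any monoid. This matches the standard Green's theorem treatment in \cite{Howie}. For profinite $D$ one may additionally note that $H_e$ is closed, since $D e$ and $e D$ are closed, but maximality itself does not require this.
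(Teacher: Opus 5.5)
Your proof is correct. For the group part it takes a different route from the paper, which only remarks that maximality follows from the definition of $\HH$ (your first part) and cites Green's theorem (Howie, Theorem 2.2.5 and Corollary 2.2.6) for the fact that $H_e$ is a group.

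That theorem is proved through Green's lemma on translations between $\RR$- and $\LL$-classes. It yields the general dichotomy that an $\HH$-class $H$ is either a group or satisfies $H^2 \cap H = \emptyset$. You instead verify directly, in the idempotent case only, that:
\begin{enumerate}
\item $e$ is a two-sided identity on $H_e$;
\item the one-sided inverses $eye$ and $eze$ coincide and lie in $H_e$;
\item $H_e$ is closed under products.
\end{enumerate}
Your route is shorter and self-contained. The cited route buys more: it gives the facts the paper uses elsewhere, that non-regular $\HH$-classes are not even closed under multiplication and that an $\HH$-class contains at most one idempotent.

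Two small points.
\begin{itemize}
\item In the first part and at the end of Step 2, you attach identities to the wrong ideals. For example, $Dg \subseteq De$ comes from $g = ge$, not $g = eg$. Likewise $e \in Dy'$ comes from $xy' = e$, not $y'x = e$. This is harmless, since both identities in each pair hold.
\item Your closing aside on closedness is under-justified. $H_e$ is in general a proper subset of $De \cap eD$, so the closedness of $De$ and $eD$ does not suffice. Closedness follows from compactness instead: $H_e$ is the projection of the compact set $\{(x,y) \in eDe \times eDe \mid xy = yx = e\}$.
\end{itemize}
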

\begin{proof}
It is not trivial even to see that the $\HH$-class $H_e$ is closed under the multiplication of $D$, let alone that $H_e$ forms a group, whereas the maximality is an easy consequence from the definition of $\HH$. See e.g.\ Theorem 2.2.5 and Corollary 2.2.6 \cite{Howie}. 
\end{proof}

In a sense, the existence of non-trivial idempotents (i.e.\ those not equal to the identity element) distinguishes monoids from groups; and thanks to this feature, monoids can have several maximal subgroups around their idempotents that are not necessarily isomorphic to each other. It is known that, however, the maximal subgroups at \emph{$\JJ$-equivalent} idempotents are isomorphic:
\begin{prop}
Let $e, e' \in D$ be idempotents belonging to the same $\JJ$-class $J$. Then the maximal subgroups $H_e, H_{e'}$ at $e, e' \in D$ are (non-canonically) isomorphic as groups. 
\end{prop}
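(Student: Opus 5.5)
The plan is the classical Green argument: pass from $e\,\JJ\,e'$ to an $\RR$-class and an $\LL$-class, then use Green's lemma to produce mutually inverse translations between $H_e$ and $H_{e'}$ that respect multiplication. By the proposition $\JJ = \LL \circ \RR = \RR \circ \LL$ (here we use that $D$ is finite or profinite), $e \,\JJ\, e'$ gives an element $x \in D$ with $e \,\RR\, x$ and $x \,\LL\, e'$. In egg-box terms, $x$ lies in the $\HH$-class $H_{ij} = R_e \cap L_{e'}$.

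From $e \,\RR\, x$ we have $x \in eD$, so $ex = x$. From $x \,\LL\, e'$ we have $x \in De'$, so $xe' = x$, and moreover $e' = yx$ for some $y \in D$. I will replace $y$ by $y' := e' y e$. Then
$$y'x = e'yex = e'yx = e'e' = e'.$$
Also $xy'$ is idempotent, since $(xy')(xy') = x(y'x)y' = xe'y' = xy'$.

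The next step is to check that $xy' = e$, or at least that $xy'$ lies in $H_e$. Since $e \,\RR\, x$, write $e = xz$. Then $xy'e = xy'xz = xe'z = xz = e$. Conversely, $xy' \in xD = eD$, so $e \cdot xy' = xy'$. Hence $xy' \,\RR\, e$. Combined with idempotence, I expect $xy' = e$ by the standard argument: in Green's lemma the maps $\rho_{y'}\colon r \mapsto ry'$ and $\rho_x\colon r \mapsto rx$ are mutually inverse bijections between $L_x$ and $L_e$ that preserve $\RR$-classes. This step, getting the pair $(x, y')$ to satisfy $xy' = e$ and $y'x = e'$ with $x \in eDe'$ and $y' \in e'De$, is the main obstacle. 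I will follow Howie, Lemma 2.2.1 (Green's lemma) and Theorem 2.2.5, rather than redo it.

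Granting this, define $\phi\colon H_e \to H_{e'}$ by $\phi(h) = y'hx$. Multiplicativity is immediate:
$$\phi(h)\phi(k) = y'h(xy')kx = y'hekx = y'hkx = \phi(hk),$$
using that $e$ is the identity of $H_e$. We also have $\phi(e) = y'ex = y'x = e'$. The inverse map is $k \mapsto xky'$, since $x(y'hx)y' = ehe = h$, and symmetrically in the other direction.

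It remains to check that $\phi$ lands in $H_{e'}$. By Green's lemma, composing the right translation by $x$ and the left translation by $y'$ carries $H_e$ bijectively onto $H_{e'}$. Alternatively, $\phi(h)$ is invertible in $e'De'$ with inverse $\phi(h^{-1})$, and the maximal subgroup of $e'De'$ is $H_{e'}$. Finally, the construction is non-canonical because it depends on the choice of $x$. In the profinite case $\phi$ is continuous, being a composite of multiplications, hence a topological isomorphism.
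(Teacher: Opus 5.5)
Your argument is correct and is essentially the classical proof in Howie (Proposition 2.3.6) that the paper cites: you pick $x \in R_e \cap L_{e'}$, produce $y' \in e'De$ with $xy' = e$ and $y'x = e'$, and use the map $h \mapsto y'hx$, while your $(e'De')^\times = H_{e'}$ argument replaces Green's lemma for showing the map lands in $H_{e'}$. The step you flag as the main obstacle is already settled by your own computations, without Green's lemma: $y' = e'ye$ gives $y'e = y'$, so $xy' = xy'e = e$ (whereas $\RR$-equivalence plus idempotence alone would not force $xy' = e$).
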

\begin{proof}
See e.g.\ Proposition 2.3.6 \cite{Howie}. 
\end{proof}

\begin{rem}[$\JJ$-equivalent $\HH$-classes are homeomorphic]
Moreover, let $e \in D$ be an idempotent and included in a $\JJ$-class $J = J_e$; the above proposition says that $H_e \subseteq J$ is a group with $e$ the identity element. The other $\HH$-classes in $J$ may not contain idempotents and not form groups, but still isomorphic to the group $H_e$ as \emph{sets}. Indeed, this is true more generally: every $\HH$-class in a single $\JJ$-class $J$ is isomorphic to each other as sets. This isomorphism is a homeomorphism when $D$ is a profinite monoid; if $D$ is finite in particular, every $\HH$-class in a $\JJ$-class has the same number of elements. In general, an $\HH$-class is said to be \emph{regular} if it contains an idempotent; otherwise, we say that it is \emph{non-regular}.
\end{rem}

\bibliographystyle{abbrv}
\bibliography{galois_deformation_theory}
\end{document}